\definecolor{mycitecolor}{rgb}{0,.6,0} 
\definecolor{mylinkcolor}{rgb}{0,0,.8}   
\numberwithin{equation}{section}
\newtheorem{theorem}[equation]{Theorem}
\newtheorem{lemma}[equation]{Lemma}
\newtheorem{proposition}[equation]{Proposition}
\newtheorem{corollary}[equation]{Corollary}
\theoremstyle{definition}
\newcommand{\DOT}{\setlength{\unitlength}{1pt}\begin{picture}(2.5,2)
                  (1,1)\put(2,3.5){\circle*{2}}\end{picture}}
\newcommand{\bu}{\DOT}
\newcommand{\la}{\langle}
\newcommand{\ra}{\rangle}
\newcommand{\ot}{\otimes}
\newcommand{\Wedge}{\textstyle\bigwedge}
\newcommand{\lexp}[2]{{\vphantom{#2}}^{#1}{#2}}
\newcommand{\N}{\mathbb{N}}
\newcommand{\Z}{\mathbb{Z}}
\renewcommand{\O}{\{0,1\}}
\newcommand{\HHD}{\HH^{\bu}}
\newcommand{\CC}{\mathbb{C}}
\newcommand{\R}{\mathcal{R}}
\newcommand{\ds}{\displaystyle}
\newcommand{\mH}{\mathcal{H}}
\newcommand{\CX}{\CC \langle X \rangle}
\newcommand{\X}{\langle X \rangle}
\newcommand{\irr}{\text{irr}}
\newcommand{\q}{\mathbf{q}}
\newcommand{\rt}{\triangleright}
\DeclareMathOperator{\ddet}{det}
\DeclareMathOperator{\Hom}{Hom}
\DeclareMathOperator{\HH}{HH}
\DeclareMathOperator{\Ext}{Ext}
\DeclareMathOperator{\gr}{gr}
\DeclareMathOperator{\id}{id}
\DeclareMathOperator{\mult}{mult}
\renewcommand{\span}{\operatorname{span}}
\renewcommand{\H}{\operatorname{H}}
\begin{document}

\title[Twisted quantum Drinfeld Hecke algebras]
{Twisted quantum Drinfeld Hecke algebras}

\date{Wednesday, August 08, 2012}
\subjclass[2010]{16E40, 16S35}

\author{Deepak Naidu}
\email{dnaidu@math.niu.edu}
\address{Department of Mathematical Sciences, Northern Illinois 
University, DeKalb, Illinois 60115, USA}

\begin{abstract}
We generalize quantum Drinfeld Hecke algebras by incorporating a 
$2$-cocycle on the associated finite group.
We identify these algebras as specializations of deformations of 
twisted skew group algebras, giving an explicit connection to Hochschild cohomology.
We classify these algebras for diagonal actions, as well 
as for the symmetric groups with their natural representations. 
Our results show that the parameter spaces for the symmetric groups
in the twisted setting is smaller than in the untwisted setting.
\end{abstract}

\maketitle

\begin{section}{Introduction}

Drinfeld Hecke algebras were defined by V.\ Drinfeld in the paper \cite{D}.
They arise as symplectic reflection algebras in the work 
of P.\ Etingof and V.\ Ginzburg \cite{EG}, as braided Cherednik algebras in the work of
Y.\ Bazlov and A.\ Berenstein \cite{BB}, and as graded version of affine Hecke algebras
in the work of G.\ Lusztig \cite{L}. They arise in diverse areas, such
as representation theory, combinatorics, and orbifold theory, and they were used by
I.\ Gordon to prove a version of the $n!$ conjecture for Weyl groups \cite{G}.

In this paper, we consider quantum and twisted analogs of Drinfeld Hecke algebras
by incorporating quantum parameters as well as a $2$-cocycle on the associated finite group.
We simultaneously generalize twisted Drinfeld Hecke algebras and quantum Drinfeld Hecke algebras.
The former was studied by S.\ Witherspoon in \cite{W}, and the latter was studied by 
V.\ Levandovskyy and A.\ Shepler in \cite{L}, and by S.\ Witherspoon and the author in \cite{NW}. 
In \cite{C}, T.\ Chmutova generalized symplectic reflection algebras by incorporating
a $2$-cocycle on the associated finite group, and showed that such a $2$-cocycle
arises naturally for nonfaithful representations. Such a $2$-cocycle also arises
in orbifold theory, where they are known as discrete torsion \cite{AR, CGW, V}.

Let $V$ be a complex vector space with basis $v_1,v_2,\ldots,v_n$,
and let $\q:= (q_{ij})_{1 \leq i,j \leq n}$ be a tuple of
nonzero scalars for which $q_{ii}=1$ and $q_{ji}=q_{ij}^{-1}$ for all $i,j$.
Let $S_\q(V)$ denote the {\bf quantum symmetric algebra}:
\[
 S_\q(V) := \CC\la v_1,\ldots,v_n \mid v_iv_j = q_{ij}v_jv_i  \mbox{ for all }
      1\leq i,j\leq n \ra.
\]

Let $G$ be a finite group acting linearly on $V$, and let $\alpha:G \times G \to \CC^\times$ 
be a normalized $2$-cocycle on $G$.
Let $\kappa: V\times V\rightarrow \CC^\alpha G$ be a bilinear map for which 
$\kappa(v_i,v_j) = - q_{ij}\kappa(v_j,v_i)$ for all $1\leq i,j\leq n$. 
Let $T(V)$ be the tensor algebra on $V$, and define  
\[
  \mH_{\q, \kappa, \alpha} := T(V)\#_\alpha G/(  v_iv_j - q_{ij} v_jv_i - \kappa(v_i,v_j) \mid
     1\leq i,j\leq n),
\]
the quotient of the twisted skew group algebra 
$T(V)\#_\alpha G$ by the ideal generated by all elements of the form specified.
Suppose that the action of $G$ on $V$ induces an action of $G$ on
$S_\q(V)$ by automorphisms, so we may form the twisted skew group algebra $S_\q(V)\#_\alpha G$.
Assigning each $v_i$ degree one and each group element degree zero makes $\mH_{\q, \kappa, \alpha}$ a
filtered algebra, and makes $S_\q(V)\#_\alpha G$ a graded algebra.
We will call $\mH_{\q, \kappa, \alpha }$ a {\bf twisted quantum Drinfeld Hecke algebra} (over $\CC$) if
it satisfies the Poincar\'e-Birkhoff-Witt condition:
The associated graded algebra $\gr \mH_{\q, \kappa, \alpha}$ 
is isomorphic, as a graded algebra, to $S_\q(V)\#_\alpha G$.
The space of all maps $\kappa: V\times V\rightarrow \CC^\alpha G$ for which
$\mH_{\q, \kappa, \alpha }$ is a twisted quantum Drinfeld Hecke algebra
will be referred to as the {\bf parameter space}.

\vspace{0.1in}

\noindent {\bf Main results and organization:} 

\vspace{0.1in}

In Section~\ref{necc and suff conditions}, we use G.\ Bergman's Diamond Lemma \cite{B} 
to give necessary and sufficient conditions for the algebra $\mH_{\q, \kappa, \alpha }$
to be a twisted quantum Drinfeld Hecke algebra.

In Section~\ref{deformations}, we identify the twisted quantum Drinfeld Hecke algebras 
$\mH_{\q, \kappa, \alpha}$ as specializations of particular types of deformations 
of the twisted skew group algebras $S_\q(V)\#_\alpha G$.  

Section~\ref{computing HH^2} develops the homological algebra needed for 
the sections that follow. Specifically, this section is concerned with the 
computation of the degree two Hochschild cohomology of $S_\q(V)\#_\alpha G$.

In Section~\ref{constant cocycles}, we establish a one-to-one correspondence between the
subspace of {\em constant} Hochschild $2\text{-cocycles}$ (defined in Section~\ref{deformations}) 
contained in $\HH^2(S_\q(V)\#_\alpha G)$ and 
twisted quantum Drinfeld Hecke algebras associated to the quadruple $(G,V, \q, \alpha)$. 
As as consequence, we show that every constant Hochschild $2$-cocycles on $S_\q(V)\#_\alpha G$
lifts to a deformation of $S_\q(V)\#_\alpha G$.

In Section~\ref{diagonal}, we consider diagonal actions of $G$ on a chosen basis for $V$, and
using results from \cite{NSW} we classify the corresponding twisted quantum Drinfeld Hecke algebras.

In Section~\ref{natural}, we consider the symmetric groups $S_n, \, n \geq 5$, 
with their natural representations, with the unique nontrivial quantum parameters $q_{ij}=-1$, $i \neq j$,
and with a cohomologically nontrivial $2$-cocycle on $S_n$, which is unique up to coboundary. 
We classify the corresponding twisted quantum Drinfeld Hecke algebras.
Our results show that the parameter space in the twisted setting is smaller than in the untwisted setting.

Throughout the paper, let $G$ denote a finite group
acting linearly on a complex vector space $V$ with basis $v_1,v_2,\ldots,v_n$.
Let $\q:= (q_{ij})_{1 \leq i,j \leq n}$ denote a tuple of
nonzero scalars for which $q_{ii}=1$ and $q_{ji}=q_{ij}^{-1}$ for all $i,j$.
We will work over the complex numbers $\CC$, and all tensor products will be
taken over $\CC$ unless otherwise indicated.

\end{section}
\begin{section}{Necessary and sufficient conditions} \label{necc and suff conditions}

In this section, we will use G.\ Bergman's Diamond Lemma \cite{B} to give necessary 
and sufficient conditions for the algebra $\mH_{\q, \kappa, \alpha }$
(defined in the introduction and recalled below) to be a twisted quantum Drinfeld Hecke algebra. First, 
we recall the notion of a twisted skew group algebra: Let $G$ be a finite group, and 
let $\alpha:G \times G \to \CC^\times$ 
be a normalized $2$-cocycle on $G$, that is
\[
\alpha(g_1,g_2)\alpha(g_1g_2,g_3) = \alpha(g_2,g_3)\alpha(g_1,g_2g_3)
\qquad \text{ and } \qquad \alpha(g,1) = 1 = \alpha(1,g), 
\]
for all $g,g_1,g_2,g_3 \in G$. 
Let $A$ be an algebra on which $G$ acts by automorphisms.
The {\bf twisted skew group algebra} $A \#_\alpha G$ is defined as follows.
As a vector space, $A \#_\alpha G$ is $A \ot \CC G$. 
Multiplication on $A \#_\alpha G$ is determined by 
\[
   (a \ot g) (b  \ot h) := \alpha(g,h) a (\lexp{g}{b}) \ot gh
\]
for all $a,b\in A$ and all $g,h\in G$, where a left superscript denotes
the action of the group element. 
The $2$-cocycle condition on $\alpha$ ensures that $A \#_\alpha G$ is an associative algebra. 
Note that $A$ is a subalgebra of $A \#_\alpha G$ via the isomorphism 
$A \xrightarrow{\sim} A \ot 1$, and
the twisted group algebra $\CC^\alpha G$ is a subalgebra of $A \#_\alpha G$ via
the isomorphism $\CC^\alpha G \xrightarrow{\sim} 1 \ot \CC^\alpha G$. 
The image of a group element $g$ in the twisted group algebra $\CC^\alpha G$ will
be denoted by $t_g$.
To shorten notation, we will write the element 
$a \ot g$ of $A \#_\alpha G$ by $at_g$.  
Since $\alpha$
is assumed to be normalized, $t_1$ is the multiplicative identity for $A \#_\alpha G$.
For all $g \in G$, we have 
\[
(t_g)^{-1} = \alpha^{-1}(g,g^{-1})t_{g^{-1}} = \alpha^{-1}(g^{-1},g)t_{g^{-1}}.
\]

Suppose that $G$ acts linearly on a complex vector space $V$ with basis $v_1,v_2,\ldots,v_n$, and
let $\q:= (q_{ij})_{1 \leq i,j \leq n}$ denote a tuple of
nonzero scalars for which $q_{ii}=1$ and $q_{ji}=q_{ij}^{-1}$ for all $i,j$.
For each group element $g\in G$, let $g^i_k$ denote the
scalar determined by the equation  
\[
   \lexp{g}{v_i} = \sum_{k=1}^n g_k^i v_k,
\]
and define the {\bf quantum $(i,j,k,l)$-minor determinant} of $g$ as 
\[
    \ddet_{ijkl} (g) := g^j_lg^i_k - q_{ji} g^i_lg^j_k.
\]

The following lemma will be used in the proof of Theorem~\ref{necessary and sufficient conditions} below.

\begin{lemma} 
\label{relation on det}
Suppose that the action of $G$ on $V$ extends to an action on $S_\q(V)$ by automorphisms,
and let $g \in G$. We have:
\begin{enumerate}
\item[(i)] $q_{lk} \ddet_{ijkl}(g) = - \ddet_{ijlk}(g)$ for all $i,j,k,l$.
\item[(ii)] For each $i,j$, if $q_{ij}\neq 1$, then $g^i_kg^j_k=0$ for all $k$.
\end{enumerate}
\end{lemma}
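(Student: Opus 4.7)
The plan is to exploit the hypothesis that $G$ acts on $S_\q(V)$ by automorphisms, so that for every $g\in G$ and every pair of indices $i,j$ the quantum commutation relation $\lexp{g}{v_i}\lexp{g}{v_j} = q_{ij}\lexp{g}{v_j}\lexp{g}{v_i}$ holds in $S_\q(V)$. The idea is to substitute $\lexp{g}{v_i}=\sum_k g^i_k v_k$, expand both sides in the PBW basis $\{v_k v_l : k\le l\}$ of $S_\q(V)$ via $v_l v_k = q_{lk} v_k v_l$ for $k<l$, and read off scalar identities among the entries $g^i_k$.

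The coefficient of $v_k^2$ on each side gives $g^i_k g^j_k = q_{ij} g^j_k g^i_k$; since the entries are scalars, this collapses to $(1-q_{ij})g^i_k g^j_k=0$, which is precisely part (ii). The coefficient of $v_k v_l$ for $k<l$ yields
\[
g^i_k g^j_l + q_{lk}\, g^i_l g^j_k \;=\; q_{ij}\bigl(g^j_k g^i_l + q_{lk}\, g^j_l g^i_k\bigr).
\]
Using $q_{ji}=q_{ij}^{-1}$ and multiplying through by $q_{ij}$, this is equivalent to
\[
(q_{ij} q_{lk} - 1)\, g^i_k g^j_l + (q_{ij} - q_{lk})\, g^i_l g^j_k \;=\; 0,
\]
and a direct expansion of the definition shows that the left-hand side equals exactly $q_{ij}\bigl(q_{lk}\ddet_{ijkl}(g) + \ddet_{ijlk}(g)\bigr)$. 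This proves part (i) whenever $k<l$.

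The two remaining cases of part (i) are easy. When $k=l$ we have $q_{lk}=q_{kk}=1$ and $\ddet_{ijkk}(g)=(1-q_{ji})g^i_k g^j_k$, so (i) reduces to the vanishing $\ddet_{ijkk}(g)=0$, which is automatic when $q_{ij}=1$ and follows from part (ii) when $q_{ij}\ne 1$. The case $k>l$ is obtained from the case $k<l$ by interchanging $k$ and $l$ and multiplying the resulting identity by $q_{lk}$, using $q_{kl}q_{lk}=1$.

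I do not anticipate a substantive obstacle; the argument is purely computational. The main care needed is keeping two independent $q$-scalars straight simultaneously --- $q_{ij}$ arising from the source indices and $q_{lk}$ arising from the target indices --- and organizing the coefficient comparison according to the three cases $k<l$, $k=l$, and $k>l$.
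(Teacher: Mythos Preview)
Your argument is correct and follows essentially the same route as the paper. For part (ii) your proof is identical to the paper's: expand the automorphism relation $\lexp{g}{v_i}\lexp{g}{v_j}=q_{ij}\lexp{g}{v_j}\lexp{g}{v_i}$ and read off the coefficient of $v_k^2$. For part (i) the paper simply cites \cite[Lemma~3.2]{LS} rather than writing out a proof; your direct coefficient comparison (for $k<l$, then the easy reductions for $k=l$ and $k>l$) supplies exactly the details one would expect behind that citation, and the verification that $(q_{ij}q_{lk}-1)g^i_kg^j_l+(q_{ij}-q_{lk})g^i_lg^j_k = q_{ij}\bigl(q_{lk}\ddet_{ijkl}(g)+\ddet_{ijlk}(g)\bigr)$ is a straightforward expansion of the definition.
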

\begin{proof}
For a proof of part (i), see \cite[Lemma 3.2]{LS}. Part (ii) follows from
the assumption that $G$ acts on $S_\q(V)$ by automorphisms and that $q_{ij}\neq 1$:
We have $\lexp{g}{v_i}\lexp{g}{v_j} = q_{ij}\lexp{g}{v_j}\lexp{g}{v_i}$,
and so 
$\left( \sum_{k=1}^n g_k^i v_k \right)
\left( \sum_{l=1}^n g_l^j v_l \right)
=
q_{ij} \left( \sum_{k=1}^n g_k^j v_k \right)
\left( \sum_{l=1}^n g_l^i v_l \right)$.
Equating coefficients of $v_k^2$ yields $g_k^ig_k^j = q_{ij} g_k^i g_k^j$,
and since $q_{ij}\neq 1$, we get $g^i_kg^j_k=0$. 

\end{proof}

Let $\kappa: V\times V\rightarrow \CC^\alpha G$ be a bilinear map for which 
$\kappa(v_i,v_j) = - q_{ij}\kappa(v_j,v_i)$ for all $1\leq i,j\leq n$. 
For each $g\in G$, let $\kappa_g: V\times V\rightarrow \CC$ be the function determined by the
condition
\[
\kappa(v,w) = \sum_{g \in G} \kappa_g(v,w)t_g \qquad \text{ for all }  v, w \in V.
\]
The condition $\kappa(v_i,v_j) = -q_{ij}\kappa(v_j,v_i)$ 
implies that $\kappa_g(v_i,v_j) = -q_{ij}\kappa_g(v_j,v_i)$ for all $g \in G$.

Recall that the algebra 
\[
  \mH_{\q, \kappa, \alpha} := T(V)\#_\alpha G/(  v_iv_j - q_{ij} v_jv_i - \kappa(v_i,v_j) \mid
     1\leq i,j\leq n)
\]
is called a twisted quantum Drinfeld
Hecke algebra if it satisfies the Poincar\'e-Birkhoff-Witt condition:
$\gr \mH_{\q, \kappa, \alpha} \cong S_\q(V)\#_\alpha G $, as graded algebras.
This is equivalent to the condition that the set 
$\{v_1^{m_1}v_2^{m_2}\cdots v_n^{m_n}t_g \mid m_i \geq 0, g \in G\}$ 
is a $\CC$-basis for $\mH_{\q, \kappa, \alpha}$.

In the proof of the theorem below, we will assume that the reader is
familiar with G.~Bergman's 1978 paper on the Diamond Lemma \cite{B}.
We will freely use terminology (e.g. ``reduction system") defined in \cite{B}.

\begin{theorem} \label{necessary and sufficient conditions}
The algebra $\mH_{\q, \kappa, \alpha}$ is a twisted quantum Drinfeld Hecke algebra if and only if
the following conditions hold.
\begin{enumerate}
\item For all $g,h \in G$ and $1 \leq i < j \leq n$,
\[
\frac{\alpha(h,g)}{\alpha(hgh^{-1}, h)} \kappa_g(v_j, v_i)
= \sum_{k < l} \ddet_{ijkl}(h) \kappa_{hgh^{-1}}(v_l,v_k).
\]
\item For all $g \in G$ and $1 \leq i < j < k \leq n$,
\[
\kappa_g(v_k, v_j)(\lexp{g}{v_i} - q_{ji} q_{ki} v_i)
+ \kappa_g(v_k, v_i) (q_{kj} v_j - q_{ji} \lexp{g}{v_j}) 
+ \kappa_g(v_j, v_i)(q_{kj}q_{ki} \lexp{g}{v_k}-v_k)   = 0.
\]
\end{enumerate}
\end{theorem}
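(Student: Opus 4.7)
My plan is to apply G.\ Bergman's Diamond Lemma~\cite{B} to the presentation
\[
\mH_{\q, \kappa, \alpha} = T(V) \#_\alpha G \,\Big/\, \bigl( v_iv_j - q_{ij}v_jv_i - \kappa(v_i, v_j) : 1 \le i,j \le n \bigr).
\]
I would take as reduction system the three families of rules
\[
t_g t_h \rightsquigarrow \alpha(g, h)\, t_{gh}, \qquad t_g v_i \rightsquigarrow \sum_{k} g^i_k\, v_k t_g, \qquad v_j v_i \rightsquigarrow q_{ji}\, v_i v_j + \kappa(v_j, v_i) \quad (i < j),
\]
together with an appropriate semigroup partial order on monomials (for instance, compare first by word length, then by the number of $v$-symbols appearing to the left of some $t$-symbol, then lexicographically on $v$-indices) under which every right-hand side above is a linear combination of strictly smaller monomials and the descending chain condition holds. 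The irreducible monomials are then exactly $\{v_1^{m_1}\cdots v_n^{m_n}\, t_g : m_i \ge 0,\ g \in G\}$, so by the Diamond Lemma the PBW condition on $\mH_{\q, \kappa, \alpha}$ is equivalent to the resolvability of every overlap ambiguity of the system.

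I would next dispose of the ambiguities not involving $\kappa$. The triple $t_g t_h t_k$ resolves thanks to the $2$-cocycle identity on $\alpha$, and the triple $t_g t_h v_i$ resolves thanks to the matrix identity $(gh)^i_m = \sum_l h^i_l g^l_m$, which is nothing other than $\lexp{gh}{v_i} = \lexp{g}{(\lexp{h}{v_i})}$. The two remaining ambiguities, which I claim produce conditions~(1) and~(2) respectively, are (A) $t_h v_j v_i$ for $i < j$, reduced either via $t_h v_j$ first or via $v_j v_i$ first, and (B) $v_k v_j v_i$ for $i < j < k$, reduced either via $v_k v_j$ first or via $v_j v_i$ first.

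For (A), each order of reduction yields a sum of terms $v_a v_b t_h$ (with $a \le b$) plus a constant remainder $\sum_f c_f\, t_f$. Comparing coefficients of $v_a v_b t_h$ for $a < b$ produces the identity $q_{ba}\,\ddet_{ijab}(h) = q_{ji}\,\ddet_{jiba}(h)$, which Lemma~\ref{relation on det}(i) collapses to a tautology (both sides equal $-\ddet_{ijba}(h)$); the diagonal $v_l^2 t_h$-coefficients agree by Lemma~\ref{relation on det}(ii) when $q_{ji} \ne 1$ and tautologically when $q_{ji} = 1$. Thus these impose no constraint on $\kappa$. The nontrivial condition arises from equating the $t_f$-coefficients: after identifying $h^j_l h^i_k - q_{ji} h^i_l h^j_k = \ddet_{ijkl}(h)$ and substituting $g := h^{-1}f$ (so that $fh^{-1} = hgh^{-1}$), the resulting equation is precisely condition~(1).

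For (B), three applications of the ordering rule in either order produce the same leading term $q_{ji} q_{ki} q_{kj}\, v_i v_j v_k$; the remaining terms, for each fixed $g \in G$, are combinations of the forms $\kappa_g(v_?, v_?)\, v_?$ and $\kappa_g(v_?, v_?)\,\lexp{g}{v_?}$, the latter arising whenever the second reduction rule is required to move a factor $t_g$ (coming from $\kappa$) past a basis vector. Extracting the $t_g$-coefficient for each $g \in G$ and each triple $i<j<k$ gives the three-term relation in condition~(2), completing the necessity-and-sufficiency argument. The main bookkeeping obstacle is the $\alpha$-tracking in (A): the $\kappa$-term produced by one path appears as $\kappa_g(v_j, v_i)\, t_h t_g = \alpha(h,g)\kappa_g(v_j,v_i)\, t_{hg}$, while the other path produces $\kappa_{g'}(v_l, v_k)\, t_{g'} t_h = \alpha(g',h)\kappa_{g'}(v_l,v_k)\, t_{g'h}$ with $g' = hgh^{-1}$; the scalar $\alpha(h, g)/\alpha(hgh^{-1}, h)$ in condition~(1) emerges exactly upon equating the $t_f$-coefficients with $f = hg = g'h$.
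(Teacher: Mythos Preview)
Your proposal is correct and follows essentially the same route as the paper's proof: the same reduction system, the same enumeration of overlap ambiguities, the same use of Lemma~\ref{relation on det} to show that the degree-two coefficients in the ambiguity $t_h v_j v_i$ impose no constraint, and the same extraction of conditions~(1) and~(2) from the $t_f$-coefficients of that ambiguity and from the ambiguity $v_k v_j v_i$, respectively. The only cosmetic difference is in the choice of monomial order; the paper uses length-then-lexicographic with $v_1 < \cdots < v_n < t_g$, while you sketch a variant involving an inversion count, but both serve the same purpose.
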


\begin{proof}
We begin by expressing the algebra $\mH_{\q, \kappa, \alpha}$
as a quotient of a free associative $\CC\text{-algebra}$.
Let $X=\{v_1,v_2, \ldots, v_n\} \cup \{t_g \mid g \in G\}$, and 
let $\CX$ be the free associative $\CC$-algebra generated by $X$.
Consider the reduction system 
\[
S=\{(t_gv_i, \lexp{g}{v_i}t_g), \, (t_gt_h, \alpha(g,h)t_{gh}), \, 
(v_jv_i, q_{ji}v_iv_j + \kappa(v_j,v_i)) \mid g,h \in G, 1 \leq i < j \leq n\}
\]
for $\CX$. Let $I$ be the ideal of $\CX$ generated by the following elements:
\[
t_gv_i -\lexp{g}{v_i}t_g, \qquad t_gt_h - \alpha(g,h) t_{gh}, \qquad 
v_jv_i-q_{ji}v_iv_j - \kappa(v_j,v_i), \qquad g,h \in G, 1 \leq i < j \leq n.
\]
In what follows, we will use the Diamond Lemma \cite{B} to show that the set 
\[
\{v_1^{m_1}v_2^{m_2}\cdots v_n^{m_n}t_g \mid m_i \geq 0, g \in G\}
\]
is a $\CC$-basis
for $\CX/I$ if and only if the two conditions in the statement of the theorem hold. 

Define a partial order $\leq$ on the free semigroup $\X$ as follow: First,
we declare that $v_1 < v_2 < \cdots < v_n < g$ for all $g \in G$, and then we set $A < B$ if 
\begin{enumerate}
\item[(i)] $A$ is of smaller length than $B$, or
\item[(ii)] $A$ and $B$ have the same length but $A$ is less than $B$ relative to
the lexicographical order.
\end{enumerate}
Then $\leq$ is a semigroup partial order on $\X$, compatible with the 
reduction system $S$, and having the descending chain condition. 
Thus, the hypothesis of the Diamond Lemma holds.

Observe that the set $\X_\irr$ of irreducible elements of $\X$ is precisely the
alleged $\CC$-basis for $\CX/I$. That is,
\[
\X_\irr = \{v_1^{m_1}v_2^{m_2}\cdots v_n^{m_n}t_g \mid m_i \geq 0, g \in G\}.
\]

In what follows, we show that all ambiguities of $S$ are
resolvable if and only if the two conditions in the statement of
the theorem hold. The theorem will then follow by the Diamond Lemma.
There are no inclusion ambiguities, but there exist overlap
ambiguities, and these correspond to the monomials 
\[
t_gt_ht_k, \qquad  t_gt_hv_i, \qquad t_hv_jv_i, \qquad v_kv_jv_i,  
\qquad \text{ where } 1 \leq i < j < k \leq n, \, g, h \in G.
\]

Associativity of the multiplication in the twisted group algebra $\CC^\alpha G$
implies that the ambiguities corresponding to the monomials $t_gt_ht_k$ is resolvable.
The equality $\lexp{gh}{v_i} = \lexp{g}{(\lexp{h}{v_i})}$
implies that the ambiguities corresponding to the monomials $t_gt_hv_i$ is resolvable.
Next, we show that the ambiguities corresponding to the monomials
$t_hv_jv_i$ is resolvable if and only if condition (1) in the statement of the
theorem holds. 
Applying a reduction to the factor $v_jv_i$ in $t_hv_jv_i$, we get
\[
q_{ji}t_hv_iv_j + t_h\kappa(v_j, v_i).
\]
Applying a reduction to the factor $t_hv_i$ and then to the resulting factor $t_hv_j$ gives
\[
\begin{aligned}[1]
q_{ji} \lexp{h}{v_i}\lexp{h}{v_j}t_h + t_h \kappa(v_j,v_i) 
&= q_{ji} \left(\sum_{l=1}^n h_l^i v_l \right) \left(\sum_{k=1}^n h_k^j v_k \right)t_h
+ t_h \kappa(v_j,v_i)\\
&= q_{ji} \sum_{l<k} h_l^ih_k^j v_lv_kt_h + q_{ji}\sum_{k<l} h_l^ih_k^j v_lv_kt_h 
+ q_{ji} \sum_{k=1}^n h_k^ih_k^jv_k^2t_h + t_h \kappa(v_j,v_i).\\
\end{aligned}
\]
Applying a reduction to the factor $v_lv_k$ in the second summation above yields
\[
q_{ji} \sum_{l<k} h_l^ih_k^j v_lv_kt_h + 
q_{ji}\sum_{k<l} h_l^ih_k^j q_{lk} v_k v_lt_h
+ q_{ji} \sum_{k<l}h_l^ih_k^j \kappa(v_l,v_k)t_h
+ \; q_{ji} \sum_{k=1}^n h_k^ih_k^jv_k^2t_h + t_h \kappa(v_j,v_i) \\
\]
Combining the first two summations, expanding  $\kappa(v_l,v_k)$ and $\kappa(v_j,v_i)$,
and then applying reductions to  each term in $\kappa(v_l,v_k)t_h$ and to each term in $t_h \kappa(v_j,v_i)$ gives
\[
\begin{aligned}[1]
& q_{ji} \sum_{k<l} \left(h_k^ih_l^j + q_{lk}h_l^ih_k^j\right) v_kv_lt_h 
+  q_{ji} \sum_{k=1}^n h_k^ih_k^jv_k^2t_h +  q_{ji} \sum_{g \in G} 
\left( \alpha(g,h) \sum_{k<l}h_l^ih_k^j \kappa_g(v_l,v_k)\right)t_{gh} \\
& \hspace{2.025in} \;+ \sum_{g \in G} \alpha(h,g) \kappa_g(v_j,v_i) t_{hg}\\
&= q_{ji} \sum_{k<l} \left(h_k^ih_l^j + q_{lk}h_l^ih_k^j\right) v_kv_lt_h 
+  q_{ji} \sum_{k=1}^n h_k^ih_k^jv_k^2t_h\\
& \hspace{1.05in} \;+ \sum_{g \in G} \left( \alpha(hgh^{-1},h) q_{ji} \sum_{k<l}h_l^ih_k^j \kappa_{hgh^{-1}}(v_l,v_k) 
+  \alpha(h,g) \kappa_g(v_j,v_i)\right) t_{hg}.
\end{aligned}
\]

Next, we apply to $t_hv_jv_i$ a reduction different from the one in the computation above:
Applying a reduction to the factor $t_hv_j$ in $t_hv_jv_i$, and then to the resulting factor $t_hv_i$, we get
\[
\lexp{h}{v_j}\lexp{h}{v_i}t_h 
= \left(\sum_{l=1}^n h_l^j v_l \right) \left(\sum_{k=1}^n h_k^i v_k \right)t_h 
= \sum_{l<k} h_l^jh_k^i v_lv_kt_h + \sum_{k<l} h_l^jh_k^i v_lv_kt_h 
+ \sum_{k=1}^n h_k^jh_k^iv_k^2t_h.
\]
Applying a reduction to the factor $v_lv_k$ in the second summation above yields
\[
\sum_{l<k} h_l^jh_k^i v_lv_kt_h + 
\sum_{k<l} q_{lk} h_l^jh_k^i v_kv_lt_h + \sum_{k<l}h_l^jh_k^i \kappa(v_l,v_k)t_h
+ \sum_{k=1}^n h_k^jh_k^iv_k^2t_h
\]
Combining the first two summations, expanding  $\kappa(v_l,v_k)$,
and then applying a reduction to each term in $\kappa(v_l,v_k)t_h$ gives
\[
\begin{aligned}[1]
&\sum_{k<l} \left(h_k^jh_l^i + q_{lk}h_l^jh_k^i\right) v_kv_lt_h 
+ \sum_{k=1}^n h_k^jh_k^iv_k^2t_h
+ \sum_{g \in G} \left( \alpha(g, h) \sum_{k<l}h_l^jh_k^i \kappa_g(v_l,v_k) \right) t_{gh}\\
&= \sum_{k<l} \left(h_k^jh_l^i + q_{lk}h_l^jh_k^i\right) v_kv_lt_h 
+ \sum_{k=1}^n h_k^jh_k^iv_k^2t_h
+ \sum_{g \in G} \left( \alpha(hgh^{-1}, h) \sum_{k<l}h_l^jh_k^i \kappa_{hgh^{-1}}(v_l,v_k) \right) t_{hg}.
\end{aligned}
\]

By equating coefficients, we see that the final expressions in the previous two 
computations are equal if and only if

\begin{enumerate}
\item[(a)] $q_{ji}h_k^ih_l^j + q_{ji}q_{lk}h_l^ih_k^j = h_k^jh_l^i + q_{lk}h_l^jh_k^i$ for all $k<l$,
\item[(b)] $q_{ji} h_k^ih_k^j = h_k^ih_k^j$ for all $k$, and
\item[(c)] for all $g \in G$, we have
\[
\ds \alpha(hgh^{-1},h) q_{ji} \sum_{k<l}h_l^ih_k^j \kappa_{hgh^{-1}}(v_l,v_k) 
+  \alpha(h,g) \kappa_g(v_j,v_i)\\
=\alpha(hgh^{-1}, h) \sum_{k<l}h_l^jh_k^i \kappa_{hgh^{-1}}(v_l,v_k).
\]
\end{enumerate}

Conditions (a) and (b) follow from part (i) and part (ii) of Lemma \ref{relation on det},
respectively. The equation in (c) is equivalent to condition (1) in the statement of the theorem.

Lastly, we show that the ambiguities corresponding to the monomials
$v_kv_jv_i$ is resolvable if and only if condition (2) in the statement of the
theorem holds. Applying a reduction to the factor $v_kv_j$ in 
$v_kv_jv_i$, we get
\[
q_{kj}v_jv_kv_i + \kappa(v_k,v_j)v_i.
\]
Applying a reduction to the factor $v_kv_i$ gives
\[
q_{kj}q_{ki}v_jv_iv_k + q_{kj}v_j\kappa(v_k,v_i) + \kappa(v_k,v_j)v_i.
\]
Applying a reduction to the factor $v_jv_i$ yields
\[
q_{kj}q_{ki}q_{ji}v_iv_jv_k + q_{kj}q_{ki}\kappa(v_j,v_i)v_k + q_{kj}v_j\kappa(v_k,v_i) 
+ \kappa(v_k,v_j)v_i.
\]
Expanding  $\kappa(v_j,v_i)$, $\kappa(v_k,v_i)$, and $\kappa(v_k,v_j)$,
applying reductions to each term in $\kappa(v_j,v_i)v_k$ and to each term in $\kappa(v_k,v_j)v_i$,
and then rearranging gives
\[
q_{kj}q_{ki}q_{ji}v_iv_jv_k 
+ \sum_{g \in G} \left( 
\kappa_g(v_k,v_j) \lexp{g}{v_i}
+ q_{kj} \kappa_g(v_k,v_i) v_j 
+ q_{kj}q_{ki}\kappa_g(v_j,v_i) \lexp{g}{v_k}
\right)t_g.
\]

Next, we apply to $v_kv_jv_i$ a reduction different from the one in the computation above:
Applying a reduction to the factor $v_jv_i$ in $v_kv_jv_i$, we get
\[
q_{ji}v_kv_iv_j + v_k\kappa(v_j,v_i).
\]
Applying a reduction to the factor $v_kv_i$ gives
\[
q_{ji}q_{ki} v_iv_kv_j + q_{ji} \kappa(v_k,v_i)v_j + v_k\kappa(v_j,v_i).
\]
Applying a reduction to the factor $v_kv_j$ yields
\[
q_{ji}q_{ki}q_{kj} v_iv_jv_k + q_{ji}q_{ki} v_i \kappa(v_k, v_j)
+ q_{ji} \kappa(v_k,v_i)v_j + v_k\kappa(v_j,v_i).
\]
Expanding  $\kappa(v_k,v_j)$, $\kappa(v_k,v_i)$, and $\kappa(v_j,v_i)$,
and then applying reductions to each term in $\kappa(v_k,v_i)v_j$ gives
\[
q_{ji}q_{ki}q_{kj}v_iv_jv_k + 
\sum_{g \in G} \left(
q_{ji}q_{ki}\kappa_g(v_k,v_j) v_i
+ q_{ji} \kappa_g(v_k,v_i) \lexp{g}{v_j} 
+ \kappa_g(v_j,v_i) v_k
\right)t_g.
\]
The final expressions in the two 
computations above are equal if and only if condition (2) in 
the statement of the theorem holds, finishing the proof.
\end{proof}

\end{section}
\begin{section}{Deformations}\label{deformations}

The primary goal of this section is to show that the twisted quantum Drinfeld Hecke algebras 
$\mH_{\q, \kappa, \alpha}$ are isomorphic to specializations of particular types of deformations
of the twisted skew group algebras $S_\q(V)\#_\alpha G$.  

Let $\hbar$ denote an indeterminate.
Recall that for a $\CC$-algebra $A$, a {\bf deformation of $A$ over $\CC[\hbar]$}
is an associative $\CC[\hbar]$-algebra whose underlying vector space is
$A[\hbar] = \CC[\hbar]\ot A$, and which reduces modulo $\hbar$ to the original algebra $A$.
Thus, the multiplication $\mu$ on $A[\hbar]$ is determined by
\[
  \mu(a,b) = \mu_0(a,b) + \mu_1(a,b) \hbar + \mu_2(a,b)\hbar^2 + \cdots
\]
for all $a,b\in A$, where $\mu_0(a,b)$ is the product in $A$, the
$\mu_i :A \times A \rightarrow A$ are $\CC$-bilinear maps extended to
be bilinear over $\CC[\hbar]$, and for each pair $(a,b)$ the sum above is finite.
A consequence of associativity of $\mu$ is that $\mu_1$ is a {\bf Hochschild 2-cocycle},
that is 
\begin{equation}\label{eqn:2-cocycle-condn}
a\mu_1(b, c) + \mu_1(a, bc) =  \mu_1(ab, c) + \mu_1(a, b)c 
\end{equation}
for all $a,b,c\in A$. 

In order to see that the twisted quantum Drinfeld Hecke algebras $\mH_{\q, \kappa, \alpha}$
may be realized as specializations of deformations of $S_\q(V)\#_\alpha G$,
we define the algebra
\[
  \mH_{\q, \kappa, \alpha, \hbar}:= (T(V)\#_\alpha G)[\hbar]/ ( v_iv_j - q_{ij} v_jv_i - \kappa(v_i,v_j)\hbar
    \mid 1\leq i,j\leq n).
\]

Assigning $\hbar$ degree zero, each $v_i$ degree one, and each $t_g$ ($g \in G$) degree zero, we see that
$\mH_{\q, \kappa, \alpha, \hbar}$ is a filtered algebra, and that 
$(S_\q(V)\#_\alpha G)[\hbar]$ is a graded algebra. We call the algebra $\mH_{\q, \kappa, \alpha, \hbar}$ 
a {\bf twisted quantum Drinfeld Hecke algebra over} $\CC[\hbar]$ if
$\gr \mH_{\q, \kappa,\alpha, \hbar} \cong (S_\q(V)\#_\alpha G)[\hbar]$, as graded algebras.
Specializing a twisted quantum Drinfeld Hecke algebra over $\CC[\hbar]$ to $\hbar=1$ yields the 
twisted quantum Drinfeld Hecke algebra over $\CC$, as defined earlier. 

In the theorem below, by the {\bf degree} of $\mu_i$, we mean its degree as a function
between graded algebras.

\begin{theorem}\label{main-thm}
Every twisted quantum Drinfeld Hecke algebra $\mH_{\q, \kappa, \alpha, \hbar}$ over $\CC[\hbar]$ is isomorphic 
to some deformation $\mu = \mu_0 + \mu_1 \hbar + \mu_2 \hbar^2 + \cdots$ of $S_\q(V)\#_\alpha G$ over $\CC[\hbar]$ 
with $\deg\mu_i = -2i$ for all $i\geq 1$.
\end{theorem}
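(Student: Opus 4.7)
The plan is to use the PBW hypothesis to identify the underlying $\CC[\hbar]$-module of $\mH_{\q, \kappa, \alpha, \hbar}$ with that of $(S_\q(V)\#_\alpha G)[\hbar]$, transport the multiplication across this identification, and then extract the degree information from an auxiliary grading in which $\hbar$ is assigned degree $2$.

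First, since $\gr \mH_{\q, \kappa, \alpha, \hbar} \cong (S_\q(V)\#_\alpha G)[\hbar]$ as graded algebras, the PBW monomials $v_1^{m_1}\cdots v_n^{m_n} t_g$ (with $m_i \geq 0$ and $g \in G$) lift to a $\CC[\hbar]$-basis of $\mH_{\q, \kappa, \alpha, \hbar}$. I would define the $\CC[\hbar]$-module isomorphism
\[
\phi: (S_\q(V)\#_\alpha G)[\hbar] \xrightarrow{\sim} \mH_{\q, \kappa, \alpha, \hbar}
\]
sending each PBW monomial to its image in $\mH_{\q, \kappa, \alpha, \hbar}$, and pull back the product on $\mH_{\q, \kappa, \alpha, \hbar}$ through $\phi$ to obtain an associative $\CC[\hbar]$-bilinear multiplication $\mu$ on $(S_\q(V)\#_\alpha G)[\hbar]$, expanded as $\mu = \mu_0 + \mu_1\hbar + \mu_2\hbar^2 + \cdots$. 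Reducing the defining relations $v_iv_j - q_{ij}v_jv_i - \kappa(v_i,v_j)\hbar$ modulo $\hbar$ gives back the defining relations of $S_\q(V)\#_\alpha G$, so $\phi$ reduces modulo $\hbar$ to the identity on $S_\q(V)\#_\alpha G$; in particular $\mu_0$ is the original product and $\mu$ is a deformation in the sense of the section.

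For the degree statement, I would equip $(T(V)\#_\alpha G)[\hbar]$ with an \emph{auxiliary} grading assigning each $v_i$ degree $1$, each $t_g$ degree $0$, and $\hbar$ degree $2$. Each defining relation $v_iv_j - q_{ij}v_jv_i - \kappa(v_i, v_j)\hbar$ is homogeneous of auxiliary degree $2$, so the quotient $\mH_{\q, \kappa, \alpha, \hbar}$ inherits the auxiliary grading, and the PBW monomial $v_1^{m_1}\cdots v_n^{m_n} t_g \cdot \hbar^k$ has auxiliary degree $\sum_i m_i + 2k$. Transporting this grading back through $\phi$ makes $\mu$ auxiliary-degree-preserving. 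If $a, b \in S_\q(V)\#_\alpha G$ are homogeneous of (original) degrees $d_a$ and $d_b$ — which agree with their auxiliary degrees as they carry no $\hbar$ — then $\mu(a,b)$ has auxiliary degree $d_a + d_b$. Writing $\mu(a,b) = \sum_k \mu_k(a,b)\hbar^k$, the auxiliary degree of $\mu_k(a,b)\hbar^k$ equals the original degree of $\mu_k(a,b)$ plus $2k$; matching degrees forces the original degree of $\mu_k(a,b)$ to be $d_a + d_b - 2k$, i.e.\ $\deg \mu_k = -2k$.

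The only nontrivial input is the PBW hypothesis, invoked solely to lift the monomial basis to a $\CC[\hbar]$-basis of $\mH_{\q, \kappa, \alpha, \hbar}$. The key conceptual move — and the one most likely to feel like an obstacle before it is spotted — is the choice of the auxiliary $\hbar$-degree-two grading, since without it one would have to analyze the straightening of PBW monomials term by term and count the factors of $\hbar$ produced by each application of the relation $v_jv_i = q_{ji}v_iv_j + \kappa(v_j,v_i)\hbar$. Once the auxiliary grading is in place, the degree claim is essentially automatic.
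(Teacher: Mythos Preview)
Your proposal is correct and the construction of $\phi$ (the paper calls it $f$, built as the composite of a PBW section with the quotient map) and of $\mu$ by transport of structure matches the paper's proof exactly.

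The one genuine difference is in the degree claim. The paper argues it by saying that to compute $\mu_i(a,b)$ one must straighten the product $f(a)f(b)$ into PBW normal form using the defining relations, and then invokes an induction on the total degree of $ab$. You instead introduce an auxiliary grading with $\deg\hbar=2$, observe that the defining relations are homogeneous of degree $2$ in this grading, and read off $\deg\mu_k=-2k$ directly. These are two packagings of the same underlying fact---each application of the rewriting rule $v_jv_i\mapsto q_{ji}v_iv_j+\kappa(v_j,v_i)\hbar$ trades two units of $v$-degree for one power of $\hbar$---but your version is cleaner: it makes the finiteness of the sum $\sum_k\mu_k(a,b)\hbar^k$ and the degree bound simultaneous and automatic, whereas the paper's induction is left as a one-line sketch. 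Neither approach needs any input beyond the PBW hypothesis.
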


\begin{proof}
Suppose that $\mH_{\q, \kappa, \alpha, \hbar}$ is a twisted quantum Drinfeld Hecke algebra over $\CC[\hbar]$. 
Consider the natural projection $T(V) \#_\alpha G \to S_\q(V) \#_\alpha G$, and let
$s: S_\q(V) \#_\alpha G \to T(V) \#_\alpha G$ be the $\CC\text{-linear section}$ 
determined by the ordering $v_1,v_2, \ldots, v_n$ of the basis of $V$. For example,
$s(v_2v_1^2t_g) = q_{21}^2 v_1^2v_2t_g$.

Extend $s$ to a $\CC[\hbar]$-linear map 
$\tilde{s}:(S_\q(V) \#_\alpha G)[\hbar] \to (T(V) \#_\alpha G)[\hbar]$, 
and let $p$ denote the natural projection from
$(T(V) \#_\alpha G)[\hbar]$ to $\mH_{\q, \kappa, \alpha, \hbar}$. 
Since $\mH_{\q, \kappa, \alpha, \hbar}$ is a twisted quantum Drinfeld Hecke algebra over $\CC[\hbar]$, 
the composition $f := p \circ \tilde{s}$ is an isomorphism of $\CC[\hbar]$-modules.

Next, define a $\CC[\hbar]$-bilinear multiplication $\mu$ on $(S_\q(V) \#_\alpha G)[\hbar]$
by 
\[
\mu := f^{-1} \circ \mult \circ (f \times f),
\]
where $\mult$ is the multiplication map in $\mH_{\q, \kappa, \alpha, \hbar}$. 
Since $\mu$ is $\CC[\hbar]$-bilinear, it must necessarily be a power
series 
\[
\mu = \mu_0 +  \mu_1 \hbar + \mu_2 \hbar + \cdots,
\]
where the $\mu_i$ are $\CC$-bilinear maps from  $(S_\q(V) \#_\alpha G) \times (S_\q(V) \#_\alpha G)$
to $S_\q(V) \#_\alpha G$. Note that, by definition of $f$, the map $\mu_0$ is precisely
the multiplication map in $S_\q(V) \#_\alpha G$, and so $\mu$ is a deformation $S_\q(V) \#_\alpha G$
over $\CC[\hbar]$.
By definition, the map $f$ is an isomorphism between the
$\CC[\hbar]$-algebras $(S_\q(V) \#_\alpha G[\hbar], \, \mu)$
and $\mH_{\q, \kappa, \alpha, \hbar}$, proving that $\mH_{\q, \kappa, \alpha, \hbar}$
is isomorphic to a deformation of $S_\q(V) \#_\alpha G$ over $\CC[\hbar]$.

Finally, we prove the degree condition on the $\mu_i$.
Given elements $a=v_1^{\beta_1}v_2^{\beta_2} \cdots v_n^{\beta_n}t_g$ and 
$b=v_1^{\gamma_1}v_2^{\gamma_2} \cdots v_n^{\gamma_n}t_h$
in $S_\q(V) \#_\alpha G$, to find $\mu_1(a,b), \mu_2(a,b), \ldots$,
we must put the product $f(a)f(b) \in \mH_{\q, \kappa, \alpha, \hbar}$ in the normal form by 
applying repeatedly the relations defining $\mH_{\q, \kappa, \alpha, \hbar}$.
Induction on the degree $\sum_{k=1}^n \beta_k + \gamma_k$ of $ab$ 
implies that $\deg\mu_i = -2i$ for all $i\geq 1$, as claimed.
\end{proof}

\begin{lemma}\label{TQDHA iff TQDHA over C[hbar]}
The algebra $\mH_{\q, \kappa, \alpha}$ is a twisted quantum Drinfeld Hecke algebra 
over $\CC$ if and only if $\mH_{\q, \kappa, \alpha, \hbar}$ is a 
twisted quantum Drinfeld Hecke algebra over $\CC[\hbar]$.
\end{lemma}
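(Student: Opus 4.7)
The plan is to apply Bergman's Diamond Lemma to $\mH_{\q, \kappa, \alpha, \hbar}$ by mirroring the argument used for Theorem~\ref{necessary and sufficient conditions}, and to observe that the resulting PBW criterion is again precisely conditions (1) and (2) of that theorem.

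Concretely, I would extend scalars to $\CC[\hbar]$, view $(T(V)\#_\alpha G)[\hbar]$ as a free $\CC[\hbar]$-algebra on $X = \{v_1, \ldots, v_n\} \cup \{t_g \mid g \in G\}$, and use the reduction system
\[
S_\hbar = \{(t_gv_i, \lexp{g}{v_i}t_g), \, (t_gt_h, \alpha(g,h)t_{gh}), \, (v_jv_i, q_{ji}v_iv_j + \kappa(v_j, v_i)\hbar) \mid g,h \in G,\, i < j\}
\]
together with the very same semigroup partial order on $\X$ used in Theorem~\ref{necessary and sufficient conditions}; this order is defined on $\X$ and is independent of the coefficient ring, so the hypotheses of the Diamond Lemma (which is valid over any commutative ring, here $\CC[\hbar]$) are met.

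The overlap ambiguities $t_gt_ht_k$, $t_gt_hv_i$, $t_hv_jv_i$, $v_kv_jv_i$ are identical to those treated previously, and the first two resolve unconditionally. Because $\hbar$ is central, the two reductions of $t_hv_jv_i$ produce expressions that differ from those in the proof of Theorem~\ref{necessary and sufficient conditions} only by a factor of $\hbar$ attached to every term containing a $\kappa_g$. Equating coefficients yields the $\hbar$-free identities (a) and (b) of that proof (automatic by Lemma~\ref{relation on det}) together with $\hbar \cdot(\text{condition (1)}) = 0$, which is equivalent to condition (1) since $\hbar$ is a nonzero divisor in $\CC[\hbar]$. An identical analysis of $v_kv_jv_i$, whose leading $q_{kj}q_{ki}q_{ji}v_iv_jv_k$ terms cancel exactly as before while every remaining term carries a single $\kappa_g$ and hence a single $\hbar$, shows that this ambiguity resolves if and only if condition (2) holds.

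The Diamond Lemma then guarantees that $\{v_1^{m_1}\cdots v_n^{m_n}t_g \mid m_i\ge 0,\, g\in G\}$ is a free $\CC[\hbar]$-basis of $\mH_{\q,\kappa,\alpha,\hbar}$, equivalently that $\gr\mH_{\q,\kappa,\alpha,\hbar} \cong (S_\q(V)\#_\alpha G)[\hbar]$ as graded algebras, if and only if conditions (1) and (2) of Theorem~\ref{necessary and sufficient conditions} hold. Since those same conditions characterize the PBW property of $\mH_{\q,\kappa,\alpha}$ by Theorem~\ref{necessary and sufficient conditions}, the two statements in the lemma are equivalent. I anticipate no substantive obstacle; the only point requiring care is verifying that the Diamond Lemma transports verbatim to the coefficient ring $\CC[\hbar]$, which is immediate because $\hbar$ appears only in coefficients and commutes with every generator.
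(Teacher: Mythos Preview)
Your proposal is correct and is precisely the approach the paper takes: the paper's proof simply states that the argument of Theorem~\ref{necessary and sufficient conditions} generalizes to $\mH_{\q,\kappa,\alpha,\hbar}$ by extending scalars to $\CC[\hbar]$, yielding the same two conditions. Your write-up supplies the details the paper leaves implicit, but the strategy is identical.
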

\begin{proof}
The proof given for $\mH_{\q, \kappa, \alpha}$  in Theorem~\ref{necessary and sufficient conditions} 
generalizes for $\mH_{\q, \kappa, \alpha, \hbar}$ by extending scalars to $\CC[\hbar]$.
That is, $\mH_{\q, \kappa, \alpha, \hbar}$ is a twisted quantum Drinfeld Hecke algebra
over $\CC[\hbar]$ if and only if the two conditions in Theorem~\ref{necessary and sufficient conditions} 
hold.
\end{proof}

\begin{corollary}
Every twisted quantum Drinfeld Hecke algebra $\mH_{\q, \kappa, \alpha}$
is isomorphic to a specialization of a deformation 
$\mu = \mu_0 + \mu_1 \hbar + \mu_2 \hbar^2 + \cdots$ of $S_\q(V)\#_\alpha G$ over $\CC[\hbar]$ 
with $\deg\mu_i = -2i$ for all $i\geq 1$.
\end{corollary}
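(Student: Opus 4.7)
The plan is to derive the corollary by chaining together Lemma~\ref{TQDHA iff TQDHA over C[hbar]} with Theorem~\ref{main-thm} and then specializing at $\hbar=1$. Since the two previous results already contain essentially all the hard work, the proof will be short; the only substantive task is checking that specialization is compatible with the isomorphism produced by Theorem~\ref{main-thm}.

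First, I would begin with a twisted quantum Drinfeld Hecke algebra $\mH_{\q,\kappa,\alpha}$ over $\CC$ and invoke Lemma~\ref{TQDHA iff TQDHA over C[hbar]} to conclude that the corresponding $\CC[\hbar]$-algebra $\mH_{\q,\kappa,\alpha,\hbar}$ is itself a twisted quantum Drinfeld Hecke algebra over $\CC[\hbar]$. Next, I would apply Theorem~\ref{main-thm} to obtain a deformation $\mu=\mu_0+\mu_1\hbar+\mu_2\hbar^2+\cdots$ of $S_\q(V)\#_\alpha G$ over $\CC[\hbar]$, with $\deg\mu_i=-2i$ for $i\ge 1$, together with an isomorphism of $\CC[\hbar]$-algebras $f:(S_\q(V)\#_\alpha G[\hbar],\,\mu)\xrightarrow{\sim}\mH_{\q,\kappa,\alpha,\hbar}$.

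The key remaining step is to identify $\mH_{\q,\kappa,\alpha}$ as a specialization at $\hbar=1$. As already observed in the paragraph following the definition of $\mH_{\q,\kappa,\alpha,\hbar}$, setting $\hbar=1$ in the defining relations $v_iv_j-q_{ij}v_jv_i-\kappa(v_i,v_j)\hbar$ of $\mH_{\q,\kappa,\alpha,\hbar}$ produces exactly the defining relations of $\mH_{\q,\kappa,\alpha}$. Hence there is a canonical $\CC$-algebra isomorphism
\[
\mH_{\q,\kappa,\alpha,\hbar}\otimes_{\CC[\hbar]}\CC \;\xrightarrow{\sim}\; \mH_{\q,\kappa,\alpha},
\]
where $\CC$ is made into a $\CC[\hbar]$-module via $\hbar\mapsto 1$. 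Applying $(-)\otimes_{\CC[\hbar]}\CC$ to the isomorphism $f$ produced above then yields
\[
\bigl(S_\q(V)\#_\alpha G,\;\mu_0+\mu_1+\mu_2+\cdots\bigr)\;\cong\;\mH_{\q,\kappa,\alpha}
\]
as $\CC$-algebras, which is exactly the statement that $\mH_{\q,\kappa,\alpha}$ is a specialization of the deformation $\mu$.

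The main potential subtlety, and the one I would address explicitly, is the well-definedness of this specialization: one needs to know that the power series $\mu_i(a,b)$ have only finitely many nonzero terms for each fixed pair $(a,b)$, so that $\mu_0(a,b)+\mu_1(a,b)+\cdots$ makes sense upon setting $\hbar=1$. This is guaranteed precisely by the degree condition $\deg\mu_i=-2i$ from Theorem~\ref{main-thm}: for $a,b\in S_\q(V)\#_\alpha G$ of bounded total degree, $\mu_i(a,b)$ vanishes for $i$ large, so the sum is finite. Thus the plan reduces to (i) citing the prior results, (ii) observing compatibility of the defining relations at $\hbar=1$, and (iii) noting the finiteness guaranteed by the degree bound. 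I do not anticipate any genuine obstacle beyond verifying these bookkeeping points.
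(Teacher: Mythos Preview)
Your proposal is correct and is exactly the intended argument: the paper states this corollary without proof, immediately after Lemma~\ref{TQDHA iff TQDHA over C[hbar]} and Theorem~\ref{main-thm}, since it follows directly by chaining these two results and specializing at $\hbar=1$ (the paper having already remarked that specializing $\mH_{\q,\kappa,\alpha,\hbar}$ at $\hbar=1$ recovers $\mH_{\q,\kappa,\alpha}$). Your additional observation that the degree condition $\deg\mu_i=-2i$ guarantees finiteness of the specialized sum is a useful clarification that the paper leaves implicit.
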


A Hochschild $2$-cocycle on $S_\q(V)\#_\alpha G$
is said to be {\bf constant} if it is of degree $-2$ as a function between graded algebras.
In the next section, it is shown that such $2$-cocycles correspond to certain {\em constant} polynomials,
justifying the choice of terminology.

\begin{proposition} \label{quantum skew-symmetrization}
Let $\mH_{\q, \kappa, \alpha}$ be a twisted quantum Drinfeld Hecke algebra.
The map $\kappa: V \times V \to \CC^\alpha G$ is equal to the {\bf quantum skew-symmetrization}
of some constant Hochschild $2$-cocycle $\mu_1$ on $S_\q(V)\#_\alpha G$, that is, 
\[
\kappa(v_i,v_j) = \mu_1(v_i,v_j) - q_{ij} \mu_1(v_j, v_i)
\]
for all $i,j$.
\end{proposition}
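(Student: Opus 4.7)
The plan is to use Theorem~\ref{main-thm} and Lemma~\ref{TQDHA iff TQDHA over C[hbar]} to extract $\mu_1$ from a deformation isomorphic to $\mH_{\q, \kappa, \alpha, \hbar}$, then to compute $\mu_1(v_i, v_j)$ on the generators directly and verify the quantum skew-symmetrization identity by cases.

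First, by Lemma~\ref{TQDHA iff TQDHA over C[hbar]}, if $\mH_{\q, \kappa, \alpha}$ is a twisted quantum Drinfeld Hecke algebra then so is $\mH_{\q, \kappa, \alpha, \hbar}$ over $\CC[\hbar]$. Theorem~\ref{main-thm} then produces an isomorphism $f = p \circ \tilde{s}$ between $(S_\q(V)\#_\alpha G)[\hbar]$ (with deformed multiplication $\mu = \mu_0 + \mu_1 \hbar + \mu_2 \hbar^2 + \cdots$) and $\mH_{\q, \kappa, \alpha, \hbar}$, with $\deg \mu_i = -2i$. In particular $\mu_1$ is constant in the sense defined just before the proposition, and the associativity relation~(\ref{eqn:2-cocycle-condn}) gives that $\mu_1$ is a Hochschild $2$-cocycle on $S_\q(V)\#_\alpha G$.

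Next I would compute $\mu_1(v_i, v_j)$ on the generating basis. Since the section $s$ fixes each $v_i$, for $i < j$ the monomial $v_iv_j$ is already in normal form in $\mH_{\q, \kappa, \alpha, \hbar}$, giving $\mu(v_i,v_j) = v_iv_j = \mu_0(v_i, v_j)$ and hence $\mu_1(v_i, v_j) = 0$. For $i > j$, one reduction using the defining relation of $\mH_{\q, \kappa, \alpha, \hbar}$ yields $f(v_i)f(v_j) = q_{ij} v_j v_i + \kappa(v_i,v_j)\hbar$, and applying $f^{-1}$ shows $\mu_1(v_i, v_j) = \kappa(v_i, v_j)$ while $\mu_k(v_i, v_j) = 0$ for $k \geq 2$. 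The case $i = j$ is trivial because $\kappa(v_i, v_i) = 0$ from the skew-symmetry hypothesis on $\kappa$ together with $q_{ii} = 1$.

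Finally I would verify the identity in three cases. For $i < j$, using $\kappa(v_j, v_i) = -q_{ji}\kappa(v_i, v_j)$ and $q_{ij}q_{ji} = 1$,
\[
\mu_1(v_i, v_j) - q_{ij}\mu_1(v_j, v_i) = 0 - q_{ij}\kappa(v_j, v_i) = q_{ij} q_{ji} \kappa(v_i, v_j) = \kappa(v_i, v_j).
\]
For $i > j$ the computation is immediate, and for $i = j$ both sides vanish. This establishes the proposition.

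The only non-routine part is the explicit normal-form computation of $f^{-1}(v_iv_j)$, but because the relation $v_iv_j - q_{ij}v_jv_i = \kappa(v_i, v_j)\hbar$ is applied only once (to a degree-two monomial), no further reductions or higher $\mu_k$ terms intervene, so the calculation is short and the main obstacle is merely keeping track of conventions for the section $s$ and the normalization of $\kappa$.
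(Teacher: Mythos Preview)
Your proof is correct and follows essentially the same approach as the paper: invoke Lemma~\ref{TQDHA iff TQDHA over C[hbar]} and Theorem~\ref{main-thm} to obtain the deformation, then compute $\mu_1(v_i,v_j)$ by reducing $f(v_i)f(v_j)$ to normal form in $\mH_{\q,\kappa,\alpha,\hbar}$ and check the identity case by case. The paper's argument is the same in structure and detail, differing only in that it treats $i\leq j$ as a single case rather than separating $i<j$ and $i=j$.
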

\begin{proof}
By Lemma~\ref{TQDHA iff TQDHA over C[hbar]}, $\mH_{\q, \kappa, \alpha,\hbar}$ 
is a twisted quantum Drinfeld Hecke algebra over $\CC[\hbar]$.
By Theorem~\ref{main-thm}, associated to $\mH_{\q, \kappa, \alpha, \hbar}$ 
is a deformation $\mu = \mu_0 + \mu_1 \hbar + \mu_2 \hbar^2 + \cdots$ of $S_\q(V)\#_\alpha G$ over $\CC[\hbar]$ 
with $\deg\mu_i = -2i$ for all $i\geq 1$. Note that $\mu_1$ is a constant Hochschild $2$-cocycle
on  $S_\q(V)\#_\alpha G$. We claim that $\kappa$ is equal to the quantum skew-symmetrization
of $\mu_1$.

Let $f$ be the map defined in the proof of Theorem~\ref{main-thm}.
For any two monomials $a, b \in S_\q(V)\#_\alpha G$, the value
of $\mu_1(a,b)$ is determined by writing the product $f(a)f(b) \in \mH_{\q, \kappa, \alpha, \hbar}$
in the normal form by applying repeatedly the relations defining $\mH_{\q, \kappa, \alpha, \hbar}$. 
If $i \leq j$, then the
product $f(v_i)f(v_j)=v_iv_j$ is already in the desired form, so $\mu_1(v_i, v_j)=0$.
If $i>j$, then we write $v_iv_j  \longrightarrow q_{ij}v_jv_i + \kappa(v_i,v_j)\hbar$, and so
$\kappa(v_i,v_j)=\mu_1(v_i, v_j)$.
If $i \leq j$, we have $\kappa(v_i,v_j) = -q_{ij} \kappa(v_j,v_i) = -q_{ij} \mu_1(v_j,v_i)$.
Thus, $\kappa(v_i,v_j) = \mu_1(v_i,v_j) - q_{ij} \mu_1(v_j, v_i)$ for all $i,j$.
\end{proof}

The proof of the theorem below is a generalization of \cite[Theorem~2.2]{NW}; see also \cite[Theorem~3.2]{Wi}.

\begin{theorem}
Every deformation $\mu = \mu_0 + \mu_1 \hbar + \mu_2 \hbar^2 + \cdots$ of $S_\q(V)\#_\alpha G$ over $\CC[\hbar]$ 
with $\deg\mu_i = -2i$ for all $i\geq 1$ is isomorphic to some twisted quantum Drinfeld Hecke algebra over $\CC[\hbar]$.
\end{theorem}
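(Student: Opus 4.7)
The plan is to extract $\kappa$ from $\mu_1$ by quantum skew-symmetrization (as suggested by Proposition~\ref{quantum skew-symmetrization}), construct a natural $\CC[\hbar]$-algebra map from $\mH_{\q, \kappa, \alpha, \hbar}$ into the deformed algebra $A_\mu := ((S_\q(V)\#_\alpha G)[\hbar], \mu)$, and then invoke a filtered/graded argument to show this map is an isomorphism while simultaneously verifying the PBW condition.

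First, set $\kappa(v_i, v_j) := \mu_1(v_i, v_j) - q_{ij}\mu_1(v_j, v_i)$. Since $\deg \mu_1 = -2$ and each $v_i$ has degree one, the output $\mu_1(v_i, v_j)$ lives in the degree-zero part of $S_\q(V)\#_\alpha G$, namely $\CC^\alpha G$, so $\kappa$ takes values in $\CC^\alpha G$ as required. By construction $\kappa(v_i, v_j) = -q_{ij}\kappa(v_j, v_i)$, so $\mH_{\q, \kappa, \alpha, \hbar}$ is defined.

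Next, I would construct a $\CC[\hbar]$-algebra homomorphism $\bar\phi : \mH_{\q, \kappa, \alpha, \hbar} \to A_\mu$ by $v_i \mapsto v_i$ and $t_g \mapsto t_g$. To see that this is well defined, the defining relations must hold in $A_\mu$. The skew group relations $t_g \cdot_\mu t_h = \alpha(g,h)t_{gh}$ and $t_g \cdot_\mu v_i = \lexp{g}{v_i} \cdot_\mu t_g$ follow by degree considerations: since $\mu_k(t_g, t_h)$ has degree $-2k$ and $\mu_k(t_g, v_i)$ has degree $1-2k$, both are negative for $k \geq 1$ and hence vanish in the $\mathbb{N}$-graded algebra $S_\q(V)\#_\alpha G$. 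For the $\kappa$-relation, the same argument forces $\mu_k(v_i, v_j) = 0$ for $k \geq 2$, so
\[
v_i \cdot_\mu v_j - q_{ij}\, v_j \cdot_\mu v_i = (v_iv_j - q_{ij}v_jv_i) + (\mu_1(v_i, v_j) - q_{ij}\mu_1(v_j, v_i))\hbar = \kappa(v_i, v_j)\hbar,
\]
where the first parenthesis vanishes by the defining relations in $S_\q(V)$.

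Finally, I would filter both sides by polynomial degree in the $v_i$'s, placing $t_g$ and $\hbar$ in degree zero. Because $\mu_k\hbar^k$ shifts degree by $-2k$, this makes $A_\mu$ a filtered algebra whose associated graded is naturally $(S_\q(V)\#_\alpha G)[\hbar]$. On the $\mH$-side, the defining relations have degree-two leading part $v_iv_j - q_{ij}v_jv_i$, yielding a surjection $\pi : (S_\q(V)\#_\alpha G)[\hbar] \twoheadrightarrow \gr \mH_{\q, \kappa, \alpha, \hbar}$. The map $\bar\phi$ respects the filtration, and the composition
\[
(S_\q(V)\#_\alpha G)[\hbar] \xrightarrow{\pi} \gr \mH_{\q, \kappa, \alpha, \hbar} \xrightarrow{\gr \bar\phi} \gr A_\mu = (S_\q(V)\#_\alpha G)[\hbar]
\]
is the identity on the generators $v_i, t_g, \hbar$, hence is the identity map. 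Since $\pi$ is surjective and the composition is injective, $\pi$ must be an isomorphism (establishing the PBW condition for $\mH_{\q, \kappa, \alpha, \hbar}$), and $\gr \bar\phi$ is an isomorphism; the standard lifting from graded to filtered isomorphism then promotes $\bar\phi$ itself to an isomorphism. The main obstacle I anticipate is in this last step: carefully identifying the filtration on $A_\mu$, checking that the degree shifts of the $\mu_k$ deliver exactly the expected associated graded, and confirming that the filtrations on both sides are exhaustive and bounded below so the graded-to-filtered lifting genuinely applies. The degree bookkeeping in the earlier steps is otherwise routine.
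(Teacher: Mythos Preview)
Your proof is correct and follows the same overall architecture as the paper: define $\kappa$ by quantum skew-symmetrization of $\mu_1$, use the degree conditions $\deg\mu_k = -2k$ to verify that the generators $v_i, t_g$ satisfy the defining relations of $\mH_{\q,\kappa,\alpha,\hbar}$ inside the deformed algebra, and conclude that the resulting algebra map is an isomorphism.

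The execution of the final isomorphism step differs slightly. The paper works with the map $\phi$ out of $(T(V)\#_\alpha G)[\hbar]$, proves surjectivity by a direct induction on degree (showing each monomial $v_{i_1}\cdots v_{i_m}t_g$ lies in the image), and then determines the kernel. You instead pass to associated graded algebras: you identify $\gr A_\mu \cong (S_\q(V)\#_\alpha G)[\hbar]$ from the degree shifts, sandwich $\gr\mH_{\q,\kappa,\alpha,\hbar}$ between two copies of this algebra via $\pi$ and $\gr\bar\phi$, and observe that the composite is the identity on generators. This buys you the PBW property for $\mH_{\q,\kappa,\alpha,\hbar}$ as a free byproduct (the paper's argument leaves PBW implicit in the isomorphism with the deformation), and it replaces the inductive surjectivity computation with a one-line graded-to-filtered lifting. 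Your stated caveat about checking that the filtrations are exhaustive and bounded below is exactly the right place to be careful; here both filtrations are $\N$-indexed and exhaustive, so the standard lemma applies without difficulty.
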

\begin{proof}

Suppose that $\mu = \mu_0 + \mu_1 \hbar + \mu_2 \hbar^2 + \cdots$ is a deformation
of $S_\q(V)\#_\alpha G$ over $\CC[\hbar]$ with $\deg\mu_i = -2i$ for all $i\geq 1$.
In what follows, we will identity $T(V) \#_\alpha G$ with the free associative
$\CC$-algebra generated by the set $\{v_1, v_2, \ldots, v_n\} \cup \{t_g \mid g \in G\}$
subject to the relations $t_gv_i = \lexp{g}{v_i}t_g$ and $t_gt_h = \alpha(g,h)t_{gh}$
for all $i \in \{1,2, \ldots, n\}$, and all $g,h \in G$.
Define a map $\phi: (T(V) \#_\alpha G)[\hbar] \to (S_\q(V)\#_\alpha G)[\hbar]$
as follows. First, set $\phi(v_i)=v_i$ and $\phi(t_g)=t_g$ 
for all $i \in \{1,2, \ldots, n\}$, and all $g \in G$.
Since $\deg\mu_k = -2k$ for all $k\geq 1$, we have
\[
\mu_k(\CC^\alpha G, \CC^\alpha G) = \mu_k(\CC^\alpha G, V) = \mu_k(V, \CC^\alpha G) =0,
\]
for all $k \geq 1$. This implies that the relations 
$t_gv_i = \lexp{g}{v_i}t_g$ and $t_gt_h = \alpha(g,h)t_{gh}$
hold in the algebra $((S_\q(V)\#_\alpha G)[\hbar], \, \mu)$,
and so we obtain a $\CC$-algebra homomorphism on $T(V) \#_\alpha G$,
which extends to a $\CC[\hbar]$-algebra homomorphism $\phi$ from 
$(T(V) \#_\alpha G)[\hbar]$ to $(S_\q(V)\#_\alpha G)[\hbar]$,
where the algebra structure on the latter is given by $\mu$.

Next, we will show that $\phi$ is surjective. It is enough to
show that each monomial $v_{i_1}\cdots v_{i_m} t_g$ is in the
image of $\phi$. The proof is by induction on the degree of
the monomial. Suppose that all monomials of degree less than $m$
are in the image of $\phi$. So, in particular,
$\phi(X) = v_{i_2}\cdots v_{i_m} g$ for some $X\in (T(V)\#_\alpha G)[\hbar]$.
Then
\begin{eqnarray*}
   \phi(v_{i_1}X ) & = & \mu(v_{i_1}, \phi(X)) \\
  &=& \mu(v_{i_1}, v_{i_2}\cdots v_{i_m}t_g)\\
  &=& v_{i_1}\cdots v_{i_m} t_g + \mu_1(v_{i_1}, v_{i_2}\cdots v_{i_m}t_g)\hbar 
    + \mu_2(v_{i_1},v_{i_2}\cdots v_{i_m}t_g ) \hbar^2 + \cdots
\end{eqnarray*}
Since $\deg(\mu_k) = -2k$, by induction hypothesis, each $\mu_k(v_{i_1} , v_{i_2}\cdots
v_{i_m}t_g)$ is in the image of $\phi$. 
Therefore, $v_{i_1}\cdots v_{i_m}t_g$ is in the image of $\phi$, and it
follows that $\phi$ is surjective.

Finally, we determine the kernel of $\phi$.
Since $\deg(\mu_1) = -2$, we can define a 
bilinear map $\kappa: V \times V \to \CC^\alpha G$
by setting $\kappa(v_i, v_j) := 
\mu_1(v_i, v_j)-q_{ij}\mu_1(v_j, v_i)$
for all $i,j$. Let $I$ denote the ideal in $(T(V) \#_\alpha G)[\hbar]$
generated by the elements 
\[
v_iv_j - q_{ij} v_jv_i - \kappa(v_i,v_j)\hbar.
\]

Since $\mu_k(v_i,v_j)=0$ for all $k \geq 2$, we have
\begin{eqnarray*}
  \phi(v_iv_j) & = & \mu(v_i, v_j) \ \ = \ \ v_iv_j + \mu_1(v_i, v_j) \hbar,\\
  \phi(v_jv_i) & = & \mu(v_j,v_i) \ \ = \ \  v_jv_i + \mu_1(v_j, v_i)\hbar,
\end{eqnarray*}
and so $I$ is contained in the kernel of $\phi$.
The form of the relations and surjectivity of $\phi$ imply that 
the kernel of $\phi$ is precisely $I$, and it follows that the
deformation $((S_\q(V)\#_\alpha G)[\hbar], \, \mu)$ is isomorphic
to the twisted quantum Drinfeld Hecke algebra $\mH_{\q, \kappa, \alpha, \hbar}$
over $\CC[\hbar]$.

\end{proof}

\end{section}
\begin{section}{Computing $\HH^2(S_\q(V)\#_\alpha G)$} \label{computing HH^2}

Let $A$ be an algebra on which the finite group $G$ acts by automorphisms,
and let $\alpha$ be a $2\text{-cocycle}$ on $G$. This section is concerned with the 
computation of the Hochschild cohomology
$\HHD(A\#_\alpha G)$ of the twisted skew group algebra $A\#_\alpha G$.
We will be particularly interested in degree two cohomology in the case when $A$ is 
the quantum symmetric algebra $S_\q(V)$. The results of this section
will be used in the sections that follow.

Recall that the Hochschild cohomology of an algebra $R$ is $\HHD(R):= \Ext^{\bu}_{R^e}(R,R)$,
where the enveloping algebra $R^e :=R\ot R^{op}$ acts on $R$ by left and right multiplication. 
When $R$ is a twisted skew group algebra $A\#_\alpha G$ in a characteristic not dividing the
order of the finite group $G$,
by a result of \c{S}tefan \cite[Corollary 3.4]{St}, there is an action of $G$ on $\HHD(A,A\#_\alpha G)= 
\Ext^{\bu}_{A^e}(A,A\#_\alpha G)$ for which $\HHD(A\#_\alpha G)$ is isomorphic
to $\HHD(A,A\#_\alpha G)^G$, the space of elements of $\HHD(A, A\#_\alpha G)$ that are invariant under 
the action of $G$.
Thus, one can compute $\HHD(A\#_\alpha G)$ by first computing $\HHD(A,A\#_\alpha G)$
and then determining the space of $G$-invariant elements. When $A$ is the 
the quantum symmetric algebra $S_\q(V)$, we compute 
$\HHD(S_\q(V),S_\q(V)\#_\alpha G)$ using the quantum Koszul resolution,
recalled below.

The {\bf quantum exterior algebra} $\Wedge _\q(V)$ associated
to the tuple $\q = (q_{ij})$ is
\[
  \Wedge_\q(V):= \CC\langle v_1,\ldots,v_n\mid v_iv_j = -q_{ij}v_jv_i
   \mbox{ for all } 1\leq i,j\leq n\rangle.
\]
Since we are working in characteristic 0, 
the defining relations imply in particular that 
$v_i^2=0$ for each $v_i$ in $\Wedge_\q(V)$.
This algebra has a basis given by all $v_{i_1}\cdots v_{i_m}$ ($0\leq m\leq n$,
$1\leq i_1<\cdots <i_m\leq n$); we will write such a basis element as $v_{i_1}\wedge
\cdots \wedge v_{i_m}$ by analogy with the ordinary exterior algebra.

By \cite[Proposition 4.1(c)]{W},
the following is a free $S_\q(V)^e$-resolution of $S_\q(V)$:

\begin{equation}
\label{label: free resolution}
\cdots \xrightarrow{} S_\q(V)^e\ot\Wedge_\q^2(V) \xrightarrow{d_2}
S_\q(V)^e \otimes \Wedge_\q ^1(V) \xrightarrow{d_1}
S_\q(V)^e \xrightarrow{\text{mult}} S_\q(V) \xrightarrow{} 0,
\end{equation}
that is, for $1\leq m\leq n$, the degree $m$ term is $S_\q(V)^e\otimes \Wedge_\q^m(V)$;
the differential $d_m$  is defined by 
\begin{equation*}
\begin{split}
&d_m(1^{\ot 2}\ot v_{j_1}\wedge\cdots\wedge v_{j_m})\\
&=  \sum_{i=1}^m (-1)^{i+1} \left[ \left(\prod_{s=1}^{i} q_{j_s, j_i} \right)
    v_{j_i}\ot 1 - \left(\prod_{s=i}^m q_{j_i, j_s} \right) \ot v_{j_i} \right] \ot
    v_{j_1}\wedge \cdots \wedge \hat{v}_{j_i} \wedge\cdots \wedge v_{j_m}
\end{split}
\end{equation*}
whenever $1\leq j_1 <\ldots < j_m\leq n$, and mult denotes the multiplication
map. The complex \eqref{label: free resolution} 
is a quantum version of the usual Koszul resolution for a polynomial ring.

Suppose that the action of $G$ on $V$ induces an action on $\Wedge_\q(V)$.
Thus, there is an action of $G$ on the quantum Koszul complex \eqref{label: free resolution},
that is, an action of $G$ on each $S_\q(V)^e\ot \Wedge^i_\q(V)$ that commutes with
the differentials.

Assume that $\HHD(S_\q(V)\#_\alpha G)$ 
has been computed using the quantum Koszul resolution. So, elements of $\HHD(S_\q(V)\#_\alpha G)$
are given as $G$-invariant elements of $\HHD(S_\q(V), S_\q(V)\#_\alpha G)$.
For our purposes, we will need to find representatives for elements in $\HH^2(S_\q(V)\#_\alpha G)$
that are given as maps from $(S_\q(V)\#_\alpha G) \ot (S_\q(V)\#_\alpha G)$
to $S_\q(V)\#_\alpha G$ satisfying the $2$-cocycle condition \eqref{eqn:2-cocycle-condn}.
To this end, we consider chain maps between the quantum Koszul resolution \eqref{label: free resolution}
and the bar resolution of $A$:
\[
\xymatrix{
\cdots \ar[r] & S_\q(V)^{\ot 4}\ar[r]^{\delta_2}\ar@<-2pt>[d]_{\Psi_2} 
               & S_\q(V)^{\ot 3}\ar[r]^{\delta_1}\ar@<-2pt>[d]_{\Psi_1} 
               & S_\q(V)^e \ar[r]^{\text{mult}} \ar[d]_{=}
               & S_\q(V) \ar[r] \ar[d]_{=} & 0\\
\cdots \ar[r] & S_\q(V)^e\ot \Wedge_\q ^2 V \ar[r]^{d_2}\ar@<-2pt>[u]_{\Phi_2}
               & S_\q(V)^e\ot \Wedge_\q ^1 V \ar[r]^{\hspace{.6cm}d_1}\ar@<-2pt>[u]_{\Phi_1}
               & S_\q(V)^e \ar[r]^{\text{mult}} \ar[u]
               & S_\q(V)\ar[r] \ar[u] & 0 .
}
\]
Here the differentials $\delta_i$ in the bar resolution are defined as 
\[
\delta_i(a_0\ot\cdots \ot a_{i+1}) = \sum_{j=0}^i (-1)^j a_0\ot\cdots \ot a_j a_{j+1}
  \ot\cdots\ot a_{i+1}
\]
for all $a_0,\ldots,a_{i+1}\in A$.
We will only need to know the values of $\Psi_2$ on elements of the form
$1\ot v_i\ot v_j\ot 1$, and these can be chosen to be
\begin{equation}\label{psi-two}
  \Psi_2(1\ot v_i\ot v_j\ot 1) = 
\begin{cases}
1\ot 1\ot v_i\wedge v_j & \text{ if } i<j, \\
0 & \text{ if } i \geq j.
\end{cases}
\end{equation}

Chain maps $\Phi_i$ are defined in \cite{NSW}, and more generally in \cite{W}, 
that embed the quantum Koszul resolution
as a subcomplex of the bar resolution. We will only need $\Phi_2$, and this is
defined by

\begin{equation}\label{phim}
  \Phi_m(1 \ot 1 \ot v_i \wedge \wedge v_j)
  = 1 \ot v_i \ot v_j \ot 1 - q_{ij} \ot v_j \ot v_i \ot 1
\end{equation}
for all $1 \leq i,j \leq n$.

We define the Reynold's operator, or averaging map, which ensures
$G$-invariance of the image, compensating for the possibility that 
$\Psi_2$ may not preserve the action of $G$:
\begin{eqnarray*}
\R_2: \Hom_{\CC}(S_\q(V)^{\ot 2}, S_\q(V) \#_\alpha G) &\to & 
\Hom_{\CC}(S_\q(V)^{\ot 2}, S_\q(V) \#_\alpha G)^G\\
\R_2(\gamma) &:=  &\frac{1}{|G|} \sum_{g \in G} \lexp{g}{\gamma}.
\end{eqnarray*}
A map that tells how to extend a function defined on
$S_\q(V)^{\ot 2}$ to a function defined on $(S_\q(V)\#_\alpha G)^{\ot 2}$ is
from \cite{CGW}: 
\begin{eqnarray*}
\Theta_2^*:  \Hom_{\CC}(S_\q(V)^{\ot 2}, S_\q(V) \#_\alpha G)^G &\to & 
\Hom_{\CC}((S_\q(V) \#_\alpha G)^{\ot 2}, S_\q(V) \#_\alpha G)\\
\Theta_2^*(\kappa)(a_1 t_{g_1} \ot a_2  t_{g_2}) & := & \alpha(g_1,g_2) \kappa(a_1 \ot \lexp{g_1}{a_2})t_{g_1g_2}.
\end{eqnarray*}

The theorem below is from \cite{CGW}; see also \cite{SW}.

\begin{theorem}[\cite{CGW}]\label{thm: CGW}
Suppose that the action of $G$ on $V$ extends to an action on $\Wedge_\q(V)$ by automorphisms.
The map
\[
 \Theta_2^* \R_2  \Psi_2^* : \Hom_\CC \left(\Wedge_\q^2(V), S_\q(V) \#_\alpha G \right) 
\to \Hom_\CC \left(S_\q(V)^{\ot 2}, S_\q(V) \#_\alpha G \right) 
\]
induces an isomorphism
\[
\HH^{2} (S_\q(V), S_\q(V) \#_\alpha G)^G 
\xrightarrow{\sim} 
\HH^2(S_\q(V) \#_\alpha G).
\]
Moreover, $\Theta_2^* \R_2  \Psi_2^* $ maps
$\HH^{2}(S_\q(V), S_\q(V) \#_\alpha G)$ onto $\HH^2(S_\q(V) \#_\alpha G)$.
\end{theorem}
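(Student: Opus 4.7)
The plan is to view the composition $\Theta_2^* \R_2 \Psi_2^*$ as the cochain-level realization of a sequence of three well-known identifications, each of which I would verify separately: (i) comparison of Hochschild cohomology computed by two different projective resolutions, (ii) averaging to pick out $G$-invariants, and (iii) Ştefan's isomorphism $\HH^{\bullet}(A\#_\alpha G)\cong \HH^{\bullet}(A,A\#_\alpha G)^G$ in the case $A = S_\q(V)$.

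First I would unpack $\Psi_2^*$. Since $\Psi_2$ extends to a full chain map between the bar resolution of $S_\q(V)$ and the quantum Koszul resolution \eqref{label: free resolution}, and both are $S_\q(V)^e$-projective resolutions of $S_\q(V)$, pulling back by $\Psi_\bullet$ induces an isomorphism on $\Ext_{S_\q(V)^e}^{\bullet}(S_\q(V), S_\q(V)\#_\alpha G) = \HH^{\bullet}(S_\q(V), S_\q(V)\#_\alpha G)$. The $\Hom$--$\otimes$ adjunction identifies the cochains for the Koszul resolution with $\Hom_\CC(\Wedge^{\bullet}_\q(V), S_\q(V)\#_\alpha G)$, giving the stated source. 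Next, because $|G|$ is invertible in $\CC$, the Reynolds operator $\R_2$ is an idempotent cochain map whose image is the $G$-invariant subcomplex, so it induces the projection of $\HH^2(S_\q(V), S_\q(V)\#_\alpha G)$ onto $\HH^2(S_\q(V), S_\q(V)\#_\alpha G)^G$. Finally, I would show that $\Theta_2^*$ sends $G$-invariant Hochschild 2-cocycles on $S_\q(V)$ to Hochschild 2-cocycles on $S_\q(V)\#_\alpha G$, and that it implements Ştefan's isomorphism $\HH^2(S_\q(V), S_\q(V)\#_\alpha G)^G \xrightarrow{\sim} \HH^2(S_\q(V)\#_\alpha G)$ in degree two. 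The final surjectivity claim then follows because $\R_2$ already surjects onto the $G$-invariants, so $\Theta_2^*\R_2\Psi_2^*$ has the same image as the restriction to $G$-invariants.

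The hard part will be step three: verifying directly that $\Theta_2^*(\kappa)$ satisfies the Hochschild 2-cocycle identity \eqref{eqn:2-cocycle-condn} on $(S_\q(V)\#_\alpha G)^{\otimes 3}$. Expanding the defining formula $\Theta_2^*(\kappa)(a_1 t_{g_1} \otimes a_2 t_{g_2}) = \alpha(g_1, g_2)\,\kappa(a_1 \otimes \lexp{g_1}{a_2})\,t_{g_1 g_2}$, the verification must simultaneously invoke (i) the 2-cocycle condition on $\alpha$ to collapse the scalar factors arising from $t_{g_1}t_{g_2}t_{g_3}$, (ii) $G$-invariance of $\kappa$ to move the group actions past $\kappa$, and (iii) the original 2-cocycle condition on $\kappa$ viewed as a map on $S_\q(V)^{\otimes 2}$. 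The rest of step three (injectivity and surjectivity on cohomology) is essentially Ştefan's theorem \cite[Corollary 3.4]{St} made concrete at the level of bar cochains, where the inverse identification is given by restricting a cocycle $\Gamma$ on $S_\q(V)\#_\alpha G$ along the inclusion $S_\q(V)^{\otimes 2}\hookrightarrow (S_\q(V)\#_\alpha G)^{\otimes 2}$ and observing that the resulting cocycle is automatically $G$-invariant up to coboundary.
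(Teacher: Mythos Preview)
The paper does not actually prove this theorem: it is stated with the attribution ``from \cite{CGW}; see also \cite{SW}'' and no proof environment follows. So there is no paper proof to compare against, only the cited references.

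That said, your three-step decomposition is exactly the architecture used in \cite{CGW} and \cite{SW}: compare resolutions via $\Psi_\bullet$, average with $\R$, then extend to the skew group algebra via $\Theta^*$ to realize \c{S}tefan's isomorphism explicitly. One point you should tighten: the paper explicitly notes that $\Psi_2$ need \emph{not} be $G$-equivariant at the cochain level (this is precisely why $\R_2$ is inserted), so your claim that $\Psi_2^*$ ``induces an isomorphism'' and then $\R_2$ ``projects onto invariants'' as two separate clean steps is slightly misleading. The correct statement is that $\Psi_2^*$ induces an isomorphism on cohomology which is $G$-equivariant \emph{only} at the level of cohomology (because any two chain maps between projective resolutions are chain-homotopic, so the induced map on Ext is canonical and hence respects the canonical $G$-action). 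With that caveat in place your argument goes through, and the surjectivity of $\Theta_2^*\R_2\Psi_2^*$ on all of $\HH^2(S_\q(V),S_\q(V)\#_\alpha G)$, not just the invariants, follows exactly as you say: $\R_2$ already kills nothing in the image since it surjects onto the invariant part, which is where the isomorphism lives.
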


Next, we will introduce some
notation, and give some formulas that will be
useful in the sections that follow. For each $g\in G$, the
space $S_\q(V)t_g \subseteq S_\q(V) \#_\alpha G$ is a (left) $S_\q(V)^e$-module via the action 
\[
   (a\ot b) \cdot (c t_g) := ac t_g b = ac (\lexp{g}{b}) t_g
\]
for all $a,b,c\in S_\q(V)$, and all $g\in G$.
Note that $\HH^{2} (S_\q(V), S_\q(V) \#_\alpha G)$
is isomorphic to the direct sum $\bigoplus_{g \in G} \HH^{2} (S_\q(V), S_\q(V)t_g)$.

We wish to express the formula for the differentials $d_m$ in the quantum koszul resolution
\eqref{label: free resolution} in a more convenient form. 
To this end, let $\N^n$ denote the set of all $n$-tuples of elements
from $\N$. The {\bf length} of $\gamma = (\gamma_1,\ldots,\gamma_n) \in \N^n$, 
denoted $|\gamma|$, is the sum $\sum_{i=1}^n \gamma_i$.
For each $\gamma \in \N^n$, define $v^\gamma := 
v_1^{\gamma_1} v_2^{\gamma_2} \cdots v_n^{\gamma_n}$. For each 
$i \in \{1, \ldots, n\}$,  define $[i] \in \N^n$ by
$[i]_j = \delta_{i,j}$, for all $j \in \{1, \ldots, n\}$.
For each $\beta=(\beta_1,\ldots,\beta_n) \in \O^n$, let $v^{\wedge \beta}$ denote
the vector $v_{j_1} \wedge \cdots \wedge v_{j_m} \in \Wedge_\q^m(V)$
determined by the conditions $m = |\beta|$, $\beta_{j_k} = 1$ for all 
$k \in \{1, \ldots, m\}$, and $j_1<\ldots <j_m$.
For each $\beta \in \O^n$ with $|\beta| = m$, we have
\begin{equation*}
d_m(1^{\ot 2}\ot v^{\wedge \beta}) =
\sum_{i=1}^n \delta_{\beta_i,1} (-1)^{\sum_{s=1}^{i-1} \beta_s} 
\left[ \left(\prod_{s=1}^i q_{s, i}^{\beta_s} \right)
    v_i\ot 1 - \left(\prod_{s=i}^n q_{i, s}^{\beta_s} \right) \ot v_i \right] \ot
    v^{\wedge(\beta-[i])}.
\end{equation*}

Removing the term $S_\q(V)$ from the quantum koszul resolution \eqref{label: free resolution},
applying the functor $\Hom_{S_\q(V)^e}(\,\cdot, \, S_\q(V)t_g)$,
and then identifying $\Hom_{S_\q(V)^e}\left(S_\q(V)^e \ot \Wedge_\q^{\bu}(V), S_\q(V)t_g\right)
\cong \Hom_\CC\left(\Wedge_\q^{\bu}(V), S_\q(V)t_g\right)$ with 
$S_\q(V)t_g \ot \Wedge_{\q^{-1}}^{\bu}(V^*)$, we obtain complex
\begin{equation}
\label{new Hom(resolution) with G}
0 \xrightarrow{}  S_\q(V)t_g \xrightarrow{d_1^*} 
S_\q(V)t_g \ot \Wedge^1_{\q^{-1}}(V^*) \xrightarrow{d_2^*} 
S_\q(V)t_g \ot \Wedge^2_{\q^{-1}}(V^*) \xrightarrow{} \cdots.
\end{equation}
For all $a \in S_\q(V)$,  and all $\beta \in \O^n$ with $|\beta|=m-1$,
the differential $d_m^*$ sends the element $a t_g\ot (v^*)^{\wedge\beta}$ to
\begin{equation}
\label{formula for d_m^* with G}
\sum_{i=1}^n
\delta_{\beta_i,0} (-1)^{\sum_{s=1}^i \beta_s} \left[
\left( \left( \prod_{s=1}^i q_{s,i}^{\beta_s} \right) v_ia - 
\left( \prod_{s=i}^n q_{i,s}^{\beta_s}\right) a(\lexp{g}{v_i}) \right) t_g \right]
\ot {(v^*)}^{\wedge(\beta+[i])}.
\end{equation}

For later use, we record the following formula.
Let $\eta \in( S_\q(V) \#_\alpha G) \ot \Wedge_{\q^{-1}}^2(V^*)$.
Then,
\begin{equation}
\label{composition}
[\Theta_2^* \R_2   \Psi_2^* (\eta)]
(v_i  \ot v_j ) = \frac{1}{|G|} \sum_{g \in G} \,  \lexp{g}\!
  {(\eta (\Psi_2(1 \ot \lexp{g^{-1}}{v_i} \ot \lexp{g^{-1}}{v_j} \ot 1)))}.
\end{equation}

The elements of $((S_\q(V)\#_\alpha G)\ot \Wedge_{\q^{-1}}^2(V^*))^G$ that correspond to 
{\em constant} Hochschild two-cocycles,
that is, those of degree $-2$ as maps from $(S_\q(V)\#_\alpha G)\ot (S_\q(V)\#_\alpha G)$ to $S_\q(V)\#_\alpha G$, 
are precisely those in $(\CC^\alpha G\ot\Wedge_{\q^{-1}}^2(V^*))^G$, due to the form of the chain map $\Psi_2$.
Note that the intersection of the 
image of $d_2^*$ with $\CC^\alpha G\ot\Wedge_{\q^{-1}}^2(V^*)$ is 0.
Applying our earlier formula, letting $\beta = [j] + [k]$, 

\begin{equation}  \label{formula for d_3^*}
d_3^*( t_g\ot v_j^*\wedge v_k^*) 
  = \sum_{i\not\in\{j,k\}} (-1)^{\sum_{s=1}^i \beta_s} 
   \left[ \left(\left( \prod_{s=1}^i q_{s,i}^{\beta_s} \right) 
    v_i - \left(\prod _{s=i}^n q_{i,s}^{\beta_s}\right)
    \lexp{g}{v_i} \right)  t_g \right] \ot (v^*)^{\wedge(\beta + [i])}.  
\end{equation}

\end{section}
\begin{section}{Constant Hochschild $2$-cocycles}\label{constant cocycles}

In this section, we will establish the following bijection:
\[
\left\{
\begin{tabular}{c}
\text{constant Hochschild} \\
\text{ $2$-cocycles on } $S_\q(V)\#_\alpha G$
\end{tabular}
\right\}
\longleftrightarrow
\left\{
\begin{tabular}{c}
\text{twisted quantum Drinfeld } \\
\text{Hecke algebras } $\mH_{\q, \kappa, \alpha}$
\end{tabular}\right\}.
\]
We will also show that every constant Hochschild $2$-cocycles on $S_\q(V)\#_\alpha G$
lifts to a deformation of $S_\q(V)\#_\alpha G$.

We will use the following two lemmas shortly.

\begin{lemma}
The action of $G$ on $V$ extends to an action on $\Wedge_\q(V)$ by automorphisms
if, and only if, for all $g\in G$, $i\neq j$ and $k<l$,
\[
   (1-q_{ij}q_{lk} ) g^i_kg^j_l + (q_{ij} - q_{lk}) g^i_lg^j_k=0.
\]
\end{lemma}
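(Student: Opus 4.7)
The plan is to use the universal property of the quantum exterior algebra: because $\Wedge_\q(V)$ is generated in degree one by $V$, an action of $G$ on $V$ extends to an action by algebra automorphisms on $\Wedge_\q(V)$ if and only if each group element $g$ preserves the defining relations, i.e.\ $\lexp{g}{v_i}\lexp{g}{v_j} + q_{ij}\,\lexp{g}{v_j}\lexp{g}{v_i} = 0$ in $\Wedge_\q(V)$ for all $i,j$ and all $g\in G$. So the proof reduces to expressing this identity in terms of a fixed basis and reading off coefficients.

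First I would expand $\lexp{g}{v_i}\lexp{g}{v_j} = \sum_{k,l} g^i_k g^j_l\, v_k v_l$, split into the three ranges $k<l$, $k>l$, $k=l$, and apply the defining relations $v_lv_k = -q_{lk}v_kv_l$ (for $k<l$) and $v_k^2=0$ to rewrite everything in terms of the standard basis $\{v_k\wedge v_l\mid k<l\}$ of $\Wedge_\q^2(V)$. This yields
\[
\lexp{g}{v_i}\lexp{g}{v_j} = \sum_{k<l}\bigl( g^i_k g^j_l - q_{lk}\, g^i_l g^j_k\bigr)\, v_k v_l,
\]
and similarly for $\lexp{g}{v_j}\lexp{g}{v_i}$ with $i,j$ swapped.

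Substituting into $\lexp{g}{v_i}\lexp{g}{v_j} + q_{ij}\lexp{g}{v_j}\lexp{g}{v_i}=0$ and equating the coefficient of each basis vector $v_k v_l$ (which are linearly independent in $\Wedge_\q^2(V)$), using that the scalars $g^i_k$ commute, produces precisely
\[
(1-q_{ij}q_{lk})\, g^i_k g^j_l + (q_{ij}-q_{lk})\, g^i_l g^j_k = 0
\]
for each $k<l$, giving the asserted necessity and sufficiency for $i\neq j$.

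Finally, I would note that the case $i=j$ of the defining relation is automatic: the identity above with $j=i$ (and $q_{ii}=1$) collapses to $2(1-q_{lk})g^i_kg^i_l=0$, which in characteristic zero is exactly the condition that $\lexp{g}{v_i}^{\,2}=0$ in $\Wedge_\q(V)$, so this case imposes no further constraint beyond the $i\neq j$ family. The only slightly delicate point is the appeal to the linear independence of $\{v_k\wedge v_l\}_{k<l}$ in $\Wedge_\q^2(V)$, which is standard (e.g.\ from a PBW-type basis for the quantum exterior algebra); everything else is direct bookkeeping.
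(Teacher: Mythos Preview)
Your overall strategy --- expand $\lexp{g}{v_i}\lexp{g}{v_j}+q_{ij}\lexp{g}{v_j}\lexp{g}{v_i}$ in the PBW basis $\{v_k\wedge v_l\}_{k<l}$ of $\Wedge_\q^2(V)$ and equate coefficients --- is exactly the natural one, and your computation for $i\neq j$ is correct. The paper itself gives no argument here: its proof is simply a citation to \cite[Lemma~4.2]{NW}, so there is no in-paper approach to compare against, but what you wrote is presumably what that reference does.

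There is, however, a genuine gap in your final paragraph. You correctly observe that the $i=j$ case of relation-preservation is the requirement $(1-q_{lk})\,g^i_kg^i_l=0$ for all $k<l$, i.e.\ $(\lexp{g}{v_i})^2=0$ in $\Wedge_\q(V)$. But your conclusion that ``this case imposes no further constraint beyond the $i\neq j$ family'' is a non sequitur: unlike the ordinary exterior algebra, the quantum exterior algebra does \emph{not} satisfy $w^2=0$ for arbitrary $w\in V$, so $(\lexp{g}{v_i})^2=0$ is a genuine condition, and you have not shown it follows from the $i\neq j$ identities. For a general invertible linear map this implication is false: with $n=2$, $q_{12}=q\neq \pm 1$, and $g$ given by $g^1_1=g^1_2=g^2_2=1$, $g^2_1=0$, the $i\neq j$ condition $(q-q^{-1})g^1_2g^2_1=0$ holds, yet $(1-q^{-1})g^1_1g^1_2\neq 0$. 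So as written, your argument proves necessity of the stated conditions but not sufficiency: you still owe an explanation of why, under the hypotheses in force (e.g.\ finiteness of $G$, or some additional input), the diagonal constraints $(1-q_{lk})g^i_kg^i_l=0$ are automatic or are implied by the off-diagonal ones.
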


\begin{proof}
See \cite[Lemma 4.2]{NW}.
\end{proof}

\begin{lemma}\label{two-actions}
Suppose that the action of $G$ on $V$ extends to an action, by automorphisms,
on $S_\q(V)$ and on $\Wedge_\q(V)$.
Then for all $g\in G$ and all $i,j,k,l$ ($i<j$, $k<l$), if $g^i_lg^j_k\neq 0$ then
$q_{lk}=q_{ij}$, and if $g^i_kg^j_l \neq 0$, then $q_{lk}=q_{ij}^{-1}$.
\end{lemma}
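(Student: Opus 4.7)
The plan is to derive from the hypothesis that $G$ acts on $S_\q(V)$ by automorphisms a companion identity to the one already supplied by the preceding lemma (which expresses that $G$ acts on $\Wedge_\q(V)$ by automorphisms), and then to extract the two conclusions by taking the sum and difference of these two identities.

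First I would apply $g$ to the relation $v_iv_j = q_{ij}v_jv_i$ in $S_\q(V)$ and expand $\lexp{g}{v_i}\lexp{g}{v_j} = q_{ij}\lexp{g}{v_j}\lexp{g}{v_i}$. Using the reordering rule $v_lv_k = q_{lk}v_kv_l$ for $k<l$ (and $v_k^2 = v_k^2$), one collects the coefficients of the ordered monomials $v_kv_l$ with $k<l$. Equating coefficients on both sides yields, for each pair $i,j$ and each $k<l$, the identity
\[
(1-q_{ij}q_{lk})\, g^i_k g^j_l \;-\; (q_{ij}-q_{lk})\, g^i_l g^j_k \;=\; 0.
\]
(Coefficients of $v_k^2$ reproduce Lemma~\ref{relation on det}(ii) and play no further role.) The companion identity from the $\Wedge_\q(V)$-action, supplied by the preceding lemma, is
\[
(1-q_{ij}q_{lk})\, g^i_k g^j_l \;+\; (q_{ij}-q_{lk})\, g^i_l g^j_k \;=\; 0.
\]

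Adding these two equations gives $(1-q_{ij}q_{lk})\, g^i_k g^j_l = 0$, so if $g^i_k g^j_l \neq 0$ then $q_{lk}=q_{ij}^{-1}$. Subtracting gives $(q_{ij}-q_{lk})\, g^i_l g^j_k = 0$, so if $g^i_l g^j_k \neq 0$ then $q_{lk}=q_{ij}$. This is exactly the statement of the lemma.

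The computation is essentially mechanical, and I do not expect a genuine obstacle; the one point requiring care is distinguishing the two reordering conventions — in $S_\q(V)$ we have $v_lv_k = q_{lk}v_kv_l$ whereas in $\Wedge_\q(V)$ we have $v_lv_k = -q_{lk}v_kv_l$ (and $v_k^2=0$). It is precisely this sign flip that produces the crucial sign difference between the two identities above, which in turn allows the sum/difference trick to decouple the terms $g^i_kg^j_l$ and $g^i_lg^j_k$.
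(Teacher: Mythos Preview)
Your argument is correct. The paper does not actually prove this lemma; it simply cites \cite[Lemma~4.3]{NW}. Your proposal supplies a self-contained proof, and the mechanism you identify---deriving a second identity from the $S_\q(V)$-action that differs from the $\Wedge_\q(V)$-identity of the preceding lemma only by a sign, then adding and subtracting---is exactly the natural way to establish the result (and is, in fact, the argument in \cite{NW}). The only thing I would add is that your derivation of the $S_\q(V)$-identity tacitly uses $i\neq j$; since the lemma assumes $i<j$, this is harmless, but it is worth noting explicitly.
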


\begin{proof}
See \cite[Lemma 4.3]{NW}.
\end{proof}

Proposition~\ref{quantum skew-symmetrization} showed that every twisted 
quantum Drinfeld Hecke algebra arises from the quantum skew-symmetrization of 
a constant Hochschild $2$-cocycles. 
The theorem below shows that the converse is also true. The proof of the
following theorem involves the maps $\Theta_2^*, \R_2$, $\Psi_2^*$,
and $d_3^*$ defined in Section~\ref{computing HH^2}.

\begin{theorem} \label{thm: constant}
Suppose that the action of $G$ on $V$ extends to an action, by automorphisms, 
on $S_\q(V)$ and on $\Wedge_\q(V)$.
Let $\alpha$ be a normalized $2$-cocycle on $G$, 
let $\mu_1$ be a constant Hochschild 2-cocycle on $S_\q(V)\#_\alpha G$, 
and let $\kappa : V \times V \to \CC^\alpha G$ be the quantum skew-symmetrization of $\mu_1$.
Then, $\mH_{\q, \kappa, \alpha}$ is a twisted quantum Drinfeld Hecke algebra.
\end{theorem}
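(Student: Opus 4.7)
The plan is to verify that $\kappa$ satisfies conditions~(1) and~(2) of Theorem~\ref{necessary and sufficient conditions}; the conclusion then follows immediately. To this end, I would first lift $\mu_1$ to an explicit cocycle representative in the quantum Koszul complex, use that representative to express $\kappa$ in closed form, and then match conditions~(1) and~(2) with the two defining properties of the representative: $G$-invariance and the cocycle relation $d_3^*(\eta) = 0$.

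By Theorem~\ref{thm: CGW}, one may write $\mu_1 = \Theta_2^*\R_2\Psi_2^*(\eta)$ for some $G$-invariant cocycle $\eta$ in $((S_\q(V)\#_\alpha G)\ot\Wedge_{\q^{-1}}^2(V^*))^G$. Since $\mu_1$ is constant, the form of $\Psi_2$ in \eqref{psi-two} together with the remark preceding \eqref{formula for d_3^*} allows me to choose
\[
\eta=\sum_{g\in G}\sum_{k<l} c_g^{k,l}\, t_g\ot v_k^*\wedge v_l^*,
\]
with scalars $c_g^{k,l}\in\CC$. Substituting into formula \eqref{composition}, expanding $\lexp{h^{-1}}{v_i}=\sum_k (h^{-1})^i_k v_k$, and using $\lexp{h}{t_g}=\alpha(h,g)\alpha(hgh^{-1},h)^{-1}t_{hgh^{-1}}$ in $\CC^\alpha G$, I obtain $\mu_1(v_i,v_j)$ as an explicit double sum in the $c_g^{k,l}$ and the matrix coefficients of $G$ on $V$. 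Quantum skew-symmetrization then produces a closed-form expression for $\kappa_g(v_j,v_i)$ in terms of $\{c_g^{k,l}\}_{k<l}$.

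With $\kappa$ so expressed, condition~(1) of Theorem~\ref{necessary and sufficient conditions} should fall out of the $G$-invariance of $\eta$. The factor $\alpha(h,g)/\alpha(hgh^{-1},h)$ is exactly the scalar by which $h$ acts on $t_g\in\CC^\alpha G$ by $\alpha$-twisted conjugation, while the quantum minors $\ddet_{ijkl}(h)$ describe the induced $h$-action on the basis $v_k^*\wedge v_l^*$ of $\Wedge_{\q^{-1}}^2(V^*)$. Equating coefficients in $\lexp{h}{\eta}=\eta$, and normalizing the minors via Lemma~\ref{relation on det}(i), yields exactly the identity in~(1); Lemma~\ref{two-actions} guarantees that only the pairs $(k,l)$ with $q_{lk}=q_{ij}$ contribute, which is consistent with the summation in~(1). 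Condition~(2) should similarly fall out of $d_3^*(\eta)=0$: applying formula~\eqref{formula for d_3^*} to the triple $(i,j,k)$ with $i<j<k$ produces a three-term linear combination of basis vectors $(\lexp{g}{v_s}-\text{scalar}\cdot v_s)t_g$ for $s\in\{i,j,k\}$, whose coefficients in $S_\q(V)t_g\ot v_i^*\wedge v_j^*\wedge v_k^*$ are, after identification, precisely the three-term Jacobi-like identity of~(2).

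The main obstacle is the bookkeeping in the second step, namely the translation from $\eta$ to $\kappa$. The averaging operator $\R_2$ produces a sum indexed by the full group $G$, whereas conditions~(1) and~(2) are indexed by a single $g$; reducing one to the other requires careful use of $G$-invariance of $\eta$ to commute group elements past the tensor factors, tracking the $\alpha$-twists that arise in $\CC^\alpha G$, and invoking Lemma~\ref{two-actions} to discard terms whose $q$-parameters do not match. Once this bookkeeping is carried out, conditions~(1) and~(2) of Theorem~\ref{necessary and sufficient conditions} emerge directly, establishing that $\mH_{\q,\kappa,\alpha}$ is a twisted quantum Drinfeld Hecke algebra.
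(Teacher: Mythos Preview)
Your proposal is correct and follows essentially the same route as the paper: lift $\mu_1$ to a $G$-invariant $\eta\in(\CC^\alpha G\ot\Wedge_{\q^{-1}}^2(V^*))^G$ via Theorem~\ref{thm: CGW}, then read off condition~(2) from $d_3^*(\eta)=0$ and condition~(1) from $\lexp{h}{\eta}=\eta$, using Lemmas~\ref{relation on det} and~\ref{two-actions} exactly as you indicate. The ``main obstacle'' you flag dissolves once you observe that the quantum skew-symmetrization of $\mu_1$ satisfies $\kappa(v_i,v_j)=\eta(v_i\wedge v_j)$ directly---since $\eta$ is already $G$-invariant, the Reynolds sum $\R_2$ collapses after skew-symmetrization, and no further reduction of the $G$-indexed sum is needed.
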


\begin{proof}
We will show that the map $\kappa$ satisfies the conditions of 
Theorem~\ref{necessary and sufficient conditions}. Let $\eta$ be a 
$G$-invariant element of 
\[
\Hom_\CC\left(\Wedge_\q^2(V), S_\q(V) \#_\alpha G\right) \cong (S_\q(V) \#_\alpha G) \ot \Wedge^2_{\q^{-1}}(V^*).
\]
such that $[\Theta_2^* \R_2 \Psi_2^*](\eta)=\mu_1$.
Since $\mu_1$ is a {\em constant} Hochschild $2$-cocycle, the image of $\eta$ as a map
from $\Wedge_\q^2(V)$ to $S_\q(V) \#_\alpha G$ is contained in $\CC^\alpha G$, equivalently,
$\eta$ belongs to $(\CC^\alpha G) \ot \Wedge^2_{\q^{-1}}(V^*)$.

For all $1 \leq k, l \leq n$, we have
$[\Psi_2^*(\eta)](v_k \ot v_l - q_{kl} v_l \ot v_k) = \eta(v_k \wedge v_l)$.
This equality and the $G$-invariance of $\eta$ imply that
$\kappa(v_i,v_j) = \eta(v_i \wedge v_j)$ for all $1 \leq i,j \leq n$.
Indeed, we have

\begin{eqnarray*}
\kappa(v_i,v_j) &=& [\Theta_2^* \R_2 \Psi_2^*(\eta)](v_i \ot v_j - q_{ij} v_j \ot v_i) \\
&=& \frac{1}{|G|} \sum_{g \in G} \Theta_2^*(\lexp{g}{(\Psi_2^*(\eta))})(v_i \ot v_j - q_{ij} v_j \ot v_i) \\
&=& \frac{1}{|G|} \sum_{g \in G} \lexp{g}{\left((\Psi_2^*(\eta)) \lexp{g^{-1}}{(v_i \ot v_j - q_{ij} v_j \ot v_i)}\right)} \\
&=& \frac{1}{|G|} \sum_{g \in G} \lexp{g}{\left((\Psi_2^*(\eta)) \left(\sum_{k,l} 
(g^{-1})^i_k (g^{-1})^j_l (v_k \ot v_l - q_{ij} v_l \ot v_k)\right) \right)}  \\
&=& \frac{1}{|G|} \sum_{g \in G} \lexp{g}{\left(\sum_{k,l} 
(g^{-1})^i_k (g^{-1})^j_l(\Psi_2^*(\eta))  (v_k \ot v_l - q_{ij} v_l \ot v_k) \right)}  \\
&=& \frac{1}{|G|} \sum_{g \in G} \lexp{g}{\left(\sum_{k,l} 
(g^{-1})^i_k (g^{-1})^j_l  \eta(v_k \wedge v_l) \right)}  \\
&=& \frac{1}{|G|} \sum_{g \in G} \lexp{g}{\left(
\eta ( \lexp{g^{-1}}{(v_i \wedge v_j)} ) \right)}  \\
&=& \frac{1}{|G|} \sum_{g \in G} (\lexp{g}{\eta}) (v_i \wedge v_j)\\
&=& \eta(v_i \wedge v_j).
\end{eqnarray*}

Next, write
\[
  \eta = \sum_{g\in G}\sum_{1\leq r<s\leq n} \eta^g_{rs}t_g\ot v_r^*\wedge v_s^*
\in \CC^\alpha G \ot \Wedge_{\bf q^{-1}}^2(V^*) \subseteq (S_\q(V)\#_\alpha G) \ot \Wedge_{\bf q^{-1}}^2(V^*).
\]
The calculation above implies that $\kappa_g(v_i,v_j) = \eta^g_{ij}$ for all $i < j$, and all $g \in G$.
Since $\eta$ is a Hochschild $2$-cocycle, we have $d_3^*(\eta)=0$. Using \eqref{formula for d_3^*}, we see 
that, for all $1 \leq i < j < k \leq n$, we must have
\[
\sum_{g\in G} (\eta^g_{jk}v_i t_g - \eta^g_{jk} q_{ij}q_{ik}\lexp{g}{v_i}t_g
   -\eta^g_{ik}q_{ij}v_jt_g + \eta^g_{ik} q_{jk}\lexp{g}{v_j}t_g 
    + \eta^g_{ij}q_{ik}q_{jk}v_kt_g -\eta^g_{ij}\lexp{g}{v_k}t_g)=0.
\]

Equivalently,

\[
  -\eta^g_{jk}(q_{ij}q_{ik} \lexp{g}{v_i} - v_i) -\eta_{ik}^g(q_{ij}v_j-q_{jk} \lexp{g}{v_j})
   - \eta^g_{ij}(\lexp{g}{v_k} - q_{ik}q_{jk}v_k)=0
\]
for all $1 \leq i < j < k \leq n$, and all $g\in G$. 

Multiplying both sides by $q_{ji}q_{ki}q_{kj}$ yields

\[
-q_{jk} \eta_{jk}^g(\lexp{g}{v_i} - q_{ji} q_{ki} v_i)
-q_{ki} \eta_{ik}^g (q_{kj} v_j - q_{ji} \lexp{g}{v_j}) 
-q_{ji} \eta_{ij}^g(q_{kj}q_{ki} \lexp{g}{v_k}-v_k)   = 0.
\]

Substituting $\kappa_g(v_k,v_j), \kappa_g(v_k,v_i)$, and $\kappa_g(v_j,v_i)$
for $-q_{jk} \eta_{jk}^g, -q_{ki} \eta_{ik}^g$,  and $-q_{ji} \eta_{ij}^g$,
respectively, we obtain

\[
\kappa_g(v_k, v_j)(\lexp{g}{v_i} - q_{ji} q_{ki} v_i)
+ \kappa_g(v_k, v_i) (q_{kj} v_j - q_{ji} \lexp{g}{v_j}) 
+ \kappa_g(v_j, v_i)(q_{kj}q_{ki} \lexp{g}{v_k}-v_k)   = 0,
\]
which is condition (2) of Theorem~\ref{necessary and sufficient conditions}.

Next, we will show that $\kappa$ also satisfies condition (1) of 
Theorem~\ref{necessary and sufficient conditions}.
Since $\eta$ is $G$-invariant, we have 
$\eta(\lexp{h}{v_i} \wedge  \lexp{h}{v_j}) = \lexp{h}{(\eta(v_i\wedge v_j))}$ for all $i,j$, and all $h\in G$.
We have
\begin{eqnarray*}
  \eta( \lexp{h}{v_i} \wedge  \lexp{h}{v_j}) & = & \sum_{k,l} h^i_kh^j_l \eta(v_k\wedge v_l)\\
           &=& \sum_{k<l} h^i_kh^j_l\eta(v_k\wedge v_l) - \sum_{k<l} q_{lk}
   h^i_lh^j_k \eta(v_k\wedge v_l)\\
    &=& \sum_{k<l, \  g\in G} (h^i_kh^j_l - q_{lk}h^i_lh^j_k) \eta^g_{kl} t_g,
\end{eqnarray*}
and for all $i<j$, we have
\begin{eqnarray*}
  \lexp{h}{(\eta(v_i\wedge v_j))} &=& \lexp{h}{\left(\sum_{g\in G} \eta^g_{ij}t_g\right)} \\
&=& \sum_{g\in G} \eta^g_{ij} t_ht_g(t_h)^{-1} \\
&=& \sum_{g\in G} \frac{\alpha(h,g)\alpha(hg, h^{-1})}{\alpha(h^{-1},h)} \eta^g_{ij} t_{hgh^{-1}} \\
&=& \sum_{g\in G} \frac{\alpha(h,g)}{\alpha(hgh^{-1},h)} \eta^g_{ij} t_{hgh^{-1}}.
\end{eqnarray*}
Equating the coefficients of $t_{hgh^{-1}}$, we find that, for all $i<j$ and all $h,g \in G$, we have
\[
  \frac{\alpha(h,g)}{\alpha(hgh^{-1},h)} \eta^g_{ij} = \sum_{k<l} (h^i_kh^j_l-q_{lk}h^i_lh^j_k) \eta^{hgh^{-1}}_{kl}.
\]
Substituting $\kappa_g(v_i,v_j)$ and $\kappa_{hgh^{-1}}(v_k,v_l)$ for $\eta^g_{ij}$ and $\eta^{hgh^{-1}}_{kl}$,
respectively, and then multiplying both sides by $-q_{ji}$ yields
\[
  \frac{\alpha(h,g)}{\alpha(hgh^{-1},h)} \kappa_g(v_j,v_i) = 
\sum_{k<l} (q_{ji}q_{lk}h^i_lh^j_k - q_{ji}h^i_kh^j_l) \kappa_{hgh^{-1}}(v_k,v_l).
\]
Substituting $-q_{kl}\kappa_{hgh^{-1}}(v_l,v_k)$ for $\kappa_{hgh^{-1}}(v_k,v_l)$, and then
using Lemma~\ref{two-actions}, we obtain
\[
\frac{\alpha(h,g)}{\alpha(hgh^{-1}, h)} \kappa_g(v_j, v_i)
= \sum_{k < l} \ddet_{ijkl}(h) \kappa_{hgh^{-1}}(v_l,v_k),
\]
which is condition (1) of Theorem~\ref{necessary and sufficient conditions}.

\end{proof}

The proof of the following theorem involves the map $\Phi_2^*$
defined in Section~\ref{computing HH^2}.

\begin{theorem} \label{thm: bijection}
Let $\alpha$ be a normalized $2$-cocycle on $G$.
Suppose that the action of $G$ on $V$ extends to an action, by automorphisms, 
on $S_\q(V)$ and on $\Wedge_\q(V)$.
The assignment 
\[
\mu_1  \mapsto \mH_{\q, \kappa, \alpha}
\]
where $\kappa$ is the quantum skew-symmetrization of $\mu_1$ is a
bijection from the space of equivalences classes of constant Hochschild 2-cocycles on $S_\q(V)\#_\alpha G$
to the space of twisted quantum Drinfeld Hecke algebras associated to the quadruple $(G, V, \q, \alpha)$.
\end{theorem}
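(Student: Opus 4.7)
The plan is to assemble the two preceding results into a bijection and then verify injectivity at the level of cohomology. First, for any constant Hochschild $2$-cocycle $\mu_1$, Theorem~\ref{thm: constant} guarantees that its quantum skew-symmetrization $\kappa$ satisfies the two conditions of Theorem~\ref{necessary and sufficient conditions}, so $\mH_{\q,\kappa,\alpha}$ genuinely is a twisted quantum Drinfeld Hecke algebra; hence the assignment is well-defined on the level of representatives. Surjectivity is essentially a restatement of Proposition~\ref{quantum skew-symmetrization} combined with Theorem~\ref{main-thm}: given any TQDH algebra $\mH_{\q,\kappa,\alpha}$, pass to $\mH_{\q,\kappa,\alpha,\hbar}$ via Lemma~\ref{TQDHA iff TQDHA over C[hbar]}, extract the first-order term $\mu_1$ of the associated deformation, and note that $\mu_1$ is constant (of degree $-2$) with $\kappa$ as its quantum skew-symmetrization.

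The core of the argument is injectivity. Suppose $\mu_1$ and $\mu_1'$ are constant Hochschild $2$-cocycles with the same quantum skew-symmetrization $\kappa$. By Theorem~\ref{thm: CGW}, each can be represented by a $G$-invariant Koszul-side element $\eta,\eta' \in ((S_\q(V)\#_\alpha G)\otimes \Wedge_{\q^{-1}}^2(V^*))^G$ satisfying $\Theta_2^*\R_2\Psi_2^*(\eta)=\mu_1$ and similarly for $\eta'$. Since both are constant, the images actually lie in the subspace $(\CC^\alpha G\otimes \Wedge_{\q^{-1}}^2(V^*))^G$, as observed in Section~\ref{computing HH^2}. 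Expanding $\eta=\sum_{g,\,r<s}\eta^g_{rs}\,t_g\otimes v_r^*\wedge v_s^*$ and carrying out the calculation from the first half of the proof of Theorem~\ref{thm: constant} (using the formula \eqref{composition} together with the explicit form \eqref{psi-two} of $\Psi_2$) yields $\eta^g_{ij}=\kappa_g(v_i,v_j)$ for $i<j$, and the identical identity holds for $\eta'$. Therefore $\eta=\eta'$ on the nose.

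The map $\Phi_2^*$ enters here as the conceptual dual check: since $\Phi_2^*(\mu_1)(1\otimes 1\otimes v_i\wedge v_j)=\mu_1(v_i,v_j)-q_{ij}\mu_1(v_j,v_i)=\kappa(v_i,v_j)$ by formula \eqref{phim}, the pullback of $\mu_1$ along the Koszul-to-bar chain map depends only on $\kappa$, which independently confirms the identification $\eta^g_{ij}=\kappa_g(v_i,v_j)$ and shows that $\mu_1$ and $\mu_1'$ define the same element at the Koszul cochain level. Finally, to promote $\eta=\eta'$ to equality of cohomology classes, invoke the key remark from Section~\ref{computing HH^2} that $\mathrm{image}(d_2^*)\cap(\CC^\alpha G\otimes\Wedge_{\q^{-1}}^2(V^*))=0$: constant Koszul cocycles inject into $\HH^2(S_\q(V),S_\q(V)\#_\alpha G)^G \cong \HH^2(S_\q(V)\#_\alpha G)$, so distinct constant representatives yield distinct cohomology classes, and equal ones yield the same class.

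The main obstacle I expect is purely bookkeeping: tracking how $\Theta_2^*$, $\R_2$, $\Psi_2^*$, and $\Phi_2^*$ interact so as to be certain that the Koszul-side element $\eta$ is uniquely recoverable from $\kappa$. This is essentially the computation already performed in the first half of the proof of Theorem~\ref{thm: constant}, so no new technical machinery is required; the final statement reduces to invoking that computation in reverse together with the vanishing-intersection remark that makes the constant subspace a section of the cohomology.
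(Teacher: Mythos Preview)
Your argument is correct, but it is considerably more elaborate than the paper's. The paper disposes of injectivity in three lines using only the map $\Phi_2^*$: if $\mu_1$ and $\mu_1'$ have the same quantum skew-symmetrization, then $\Phi_2^*(\mu_1)(1\otimes 1\otimes v_i\wedge v_j)=\mu_1(v_i,v_j)-q_{ij}\mu_1(v_j,v_i)=\mu_1'(v_i,v_j)-q_{ij}\mu_1'(v_j,v_i)=\Phi_2^*(\mu_1')(1\otimes 1\otimes v_i\wedge v_j)$, so $\Phi_2^*(\mu_1)=\Phi_2^*(\mu_1')$ as Koszul cochains, and since $\Phi_2$ is a chain map inducing an isomorphism on cohomology, $\mu_1$ and $\mu_1'$ are cohomologous. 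You actually have this computation in your third paragraph, but you relegate it to a ``conceptual dual check'' rather than recognizing it as the entire argument. Your primary route---lifting $\mu_1,\mu_1'$ to $G$-invariant Koszul elements $\eta,\eta'$ via $\Theta_2^*\R_2\Psi_2^*$, identifying their coefficients with $\kappa_g(v_i,v_j)$, and then invoking the vanishing-intersection remark---works too, but it carries extra baggage: you need the cochain-level surjectivity of $\Theta_2^*\R_2\Psi_2^*$ onto constant cocycles (implicit in the paper's Section~\ref{computing HH^2} but not stated as such in Theorem~\ref{thm: CGW}, which is only a cohomology-level statement), and you need the vanishing intersection to pass from equality of representatives to equality of classes. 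The paper's $\Phi_2^*$ argument bypasses all of that. What your longer route buys is an explicit identification of the constant-cocycle subspace with the parameter space of $\kappa$'s, which is conceptually useful elsewhere but unnecessary here.
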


\begin{proof}
Proposition~\ref{quantum skew-symmetrization} showed that the assignment
specified in the statement of the theorem is surjective. To see that
the assignment is also injective, let $\mu_1$ and  $\mu_1'$ be constant Hochschild $2$-cocycles 
on $S_\q(V)\#_\alpha G$ such that their quantum skew-symmetrizations are equal.
We have
\begin{eqnarray*}
[\Phi_2^*(\mu_1)](1 \ot 1 \ot v_i \ot v_j) &=& \mu_1(v_i,v_j) - q_{ij} \mu_1(v_j,v_i)\\
&=& \mu_1'(v_i,v_j) - q_{ij} \mu_1'(v_j,v_i)\\
&=& [\Phi_2^*(\mu_1')](1 \ot 1 \ot v_i \ot v_j),
\end{eqnarray*}
so $\Phi_2^*(\mu_1) = \Phi_2^*(\mu_1')$, and it follows that 
$\mu_1$ and $\mu_1'$ are cohomologous.
\end{proof}

We end this section with the following.

\begin{theorem}
\label{thm: constant lifts}
Let $\alpha$ be a normalized $2$-cocycle on $G$.
Suppose that the action of $G$ on $V$ extends to an action, by automorphisms, 
on $S_\q(V)$ and on $\Wedge_\q(V)$.
Each constant Hochschild 2-cocycle on $S_\q(V)\#_\alpha G$
lifts to a deformation of $S_\q(V)\#_\alpha G$ over $\CC[\hbar]$. 
\end{theorem}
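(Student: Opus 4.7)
The plan is to extract a deformation from $\mu_1$ via the twisted quantum Drinfeld Hecke algebra machinery developed earlier, and then apply a gauge transformation to replace its first-order term by $\mu_1$ itself. Explicitly, let $\mu_1$ be a constant Hochschild $2$-cocycle on $S_\q(V)\#_\alpha G$ and let $\kappa : V \times V \to \CC^\alpha G$ be its quantum skew-symmetrization. By Theorem~\ref{thm: constant}, the algebra $\mH_{\q, \kappa, \alpha}$ is a twisted quantum Drinfeld Hecke algebra, so by Lemma~\ref{TQDHA iff TQDHA over C[hbar]} the algebra $\mH_{\q, \kappa, \alpha, \hbar}$ is one over $\CC[\hbar]$; Theorem~\ref{main-thm} then provides a $\CC[\hbar]$-algebra isomorphism with a deformation $\mu = \mu_0 + \mu_1'\hbar + \mu_2'\hbar^2 + \cdots$ of $S_\q(V)\#_\alpha G$ with $\deg \mu_i' = -2i$ for all $i \geq 1$. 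By Proposition~\ref{quantum skew-symmetrization}, the quantum skew-symmetrization of $\mu_1'$ is again $\kappa$, so the injectivity established in Theorem~\ref{thm: bijection} forces $\mu_1$ and $\mu_1'$ to lie in the same cohomology class. I would then select a Hochschild $1$-cochain $f : S_\q(V)\#_\alpha G \to S_\q(V)\#_\alpha G$ of degree $-2$ with $\mu_1 - \mu_1' = \delta f$, where $(\delta f)(a,b) = af(b) - f(ab) + f(a)b$; such an $f$ exists because $\mu_1 - \mu_1'$ is homogeneous of degree $-2$ and the Hochschild coboundary preserves degree.

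The final step is a standard gauge transformation. Extending $f$ by $\CC[\hbar]$-linearity, the operator $\phi := \id + \hbar f$ is invertible as a $\CC[\hbar]$-module endomorphism of $(S_\q(V)\#_\alpha G)[\hbar]$: because $\deg f = -2$, iterating $f$ eventually annihilates any homogeneous element, so the formal inverse $\phi^{-1} = \sum_{k \geq 0}(-\hbar f)^k$ truncates to a polynomial on each graded piece. The transported multiplication $\tilde\mu(a,b) := \phi^{-1}\bigl(\mu(\phi(a),\phi(b))\bigr)$ is then an associative deformation of $S_\q(V)\#_\alpha G$ (as the conjugate of an associative multiplication), and a direct expansion modulo $\hbar^2$ yields first-order term $\mu_1' + \delta f = \mu_1$, producing the desired lift.

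The substance of the argument is concentrated in Theorem~\ref{thm: constant}, which manufactures an actual twisted quantum Drinfeld Hecke algebra from the cocycle $\mu_1$, and in Theorem~\ref{main-thm}, which converts it into an associative deformation whose first-order term is cohomologous to $\mu_1$; the gauge step is then purely formal. The main subtlety to watch is ensuring that $f$ can be chosen homogeneous of degree $-2$ so that $\phi^{-1}$ is well-defined on the polynomial ring $\CC[\hbar]$ (as opposed to $\CC[[\hbar]]$), but this is automatic from the graded structure. Without the route through twisted quantum Drinfeld Hecke algebras, one would instead have to verify directly that the obstructions in $\HH^3(S_\q(V)\#_\alpha G)$ to the iterative Gerstenhaber construction vanish at every stage, which would be considerably more intricate.
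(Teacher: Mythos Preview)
Your argument is correct and follows the same path as the paper's proof: pass from $\mu_1$ to $\kappa$ via quantum skew-symmetrization, invoke Theorem~\ref{thm: constant}, Lemma~\ref{TQDHA iff TQDHA over C[hbar]}, and Theorem~\ref{main-thm} to obtain a deformation, and then use Proposition~\ref{quantum skew-symmetrization} together with the injectivity in Theorem~\ref{thm: bijection} to conclude that the given cocycle is cohomologous to the first-order term of that deformation. The paper stops at that point, treating the fact that cohomologous $2$-cocycles have the same lifting behavior as standard; you go further and spell out the gauge transformation explicitly (including the observation that $\deg f = -2$ forces $\phi^{-1}$ to be polynomial in $\hbar$), which is a welcome addition but not a genuinely different route.
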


\begin{proof}
Let $\mu_1'$ be a constant Hochschild 2-cocycle on $S_\q(V)\#_\alpha G$. 
By Theorem~\ref{thm: constant}, $\mu_1'$ gives rise to a twisted quantum
Drinfeld Hecke algebra $\mH_{\q, \kappa, \alpha}$,
where $\kappa$ is the quantum skew-symmetrization of $\mu_1'$.
By Lemma~\ref{TQDHA iff TQDHA over C[hbar]}, $\mH_{\q, \kappa, \alpha, \hbar}$ is a 
twisted quantum Drinfeld Hecke algebra over $\CC[\hbar]$.
By Theorem \ref{main-thm}, associated to $\mH_{\q, \kappa, \alpha, \hbar}$ is 
a deformation $\mu = \mu_0 + \mu_1 \hbar + \mu_2 \hbar^2 + \cdots$ of $S_\q(V)\#_\alpha G$. 
The proof of Proposition~\ref{quantum skew-symmetrization} shows that $\kappa$ is the 
quantum skew-symmetrization of $\mu_1$, and it follows from Theorem~\ref{thm: bijection} that
$\mu_1'$ is cohomologous to $\mu_1$. 
\end{proof}

\end{section}
\begin{section}{Diagonal actions} \label{diagonal}

As before, let $G$ be a finite group acting linearly on a vector space $V$
with basis $v_1, \ldots, v_n$. 
Assume that $v_1,\ldots,v_n$ are common eigenvectors for $G$.
In this case, the Hochschild cohomology $\HHD(S_\q(V), S_\q(V)\# G)$ 
was computed in \cite{NSW}. 
Let $\alpha$ be a normalized $2$-cocycle on $G$.
In this section, we use results 
from \cite{NSW} to give an explicit description of the subspace of 
$\HH^2(S_\q(V)\#_\alpha G)$ consisting of constant Hochschild $2$-cocycles.
As a consequence, we obtain a classification of 
twisted quantum Drinfeld Hecke algebras
associated to the quadruple $(G, V, \q, \alpha)$. 
 
Let $\lambda_{g,i}\in \CC$ be the scalars for which 
$\lexp{g}{v_i} = \lambda_{g,i}v_i$ for all 
$g \in G$, and all $i \in \{1,\ldots,n\}$.
For each $g \in G$, define
\begin{equation}\label{Cg}
C_g := \left\{ \gamma \in (\N \cup \{-1\})^n \mid 
\text{ for each } i \in \{1, \ldots, n\}, \;
\prod_{s=1}^n q_{is}^{\gamma_s} = \lambda_{g,i} \text{ or } \gamma_i = -1 \right\}.
\end{equation}

We recall the following from \cite{NSW}.

\begin{theorem}[\cite{NSW}]
If  $G$ acts diagonally on  $V$, then 
$\HH^{\bu}(S_\q(V),S_\q(V) \# G)$ is isomorphic to the graded vector subspace of 
$(S_\q(V) \#_G) \ot \Wedge_{\q^{-1}}(V^*)$ given by:
$$
\HH^m(S_\q(V),S_\q(V) \# G) 
\cong  \bigoplus_{g \in G}
\bigoplus_{\substack{\beta \in \O^n \\ |\beta| = m}} 
\bigoplus_{\substack{\tau \in \N^n \\ \tau - \beta \in C_g}}
\span_{\CC}\{(v^\tau t_g) \ot {(v^*)}^{\wedge \beta}\},
$$
for all $m \in \N$.
\end{theorem}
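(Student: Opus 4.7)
The plan is to reduce via the quantum Koszul resolution \eqref{label: free resolution} to a complex of the form $S_\q(V) t_g \otimes \Wedge_{\q^{-1}}^{\bullet}(V^*)$ for each $g \in G$, and then to exploit the diagonality of the $G$-action to make the differential $d_m^*$ act by explicit scalars on each basis vector $v^\tau t_g \otimes (v^*)^{\wedge \beta}$, whence cohomology classes can be read off combinatorially.

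First I would decompose $S_\q(V)\# G = \bigoplus_{g\in G} S_\q(V) t_g$ as $S_\q(V)^e$-modules, so that
\[
\HH^{\bullet}(S_\q(V), S_\q(V)\# G) \cong \bigoplus_{g \in G} \HH^{\bullet}(S_\q(V), S_\q(V) t_g),
\]
and for each $g$ apply $\Hom_{S_\q(V)^e}(-, S_\q(V) t_g)$ to the quantum Koszul resolution to reduce to the complex \eqref{new Hom(resolution) with G}. Since $\lexp{g}{v_i} = \lambda_{g,i} v_i$, the only non-scalar behavior in formula \eqref{formula for d_m^* with G} comes from the quantum-commutation relations, and a direct computation shows that each summand (indexed by an $i$ with $\beta_i = 0$) produces the single basis vector $v^{\tau+[i]} t_g \otimes (v^*)^{\wedge (\beta+[i])}$ with scalar coefficient
\[
c_{g,\tau,\beta,i} = \left(\prod_{s=1}^i q_{s,i}^{\beta_s}\right)\prod_{s<i} q_{i,s}^{\tau_s} - \left(\prod_{s=i}^n q_{i,s}^{\beta_s}\right) \lambda_{g,i}\prod_{s>i} q_{s,i}^{\tau_s}.
\]
Using $q_{i,s} = q_{s,i}^{-1}$, this coefficient vanishes if and only if $\prod_{s=1}^n q_{s,i}^{\tau_s - \beta_s} = \lambda_{g,i}$, which is exactly the $i$-th defining condition of $C_g$ on the vector $\gamma := \tau - \beta$; the alternative clause ``$\gamma_i = -1$'' in \eqref{Cg} corresponds to the case $\beta_i = 1$, in which the index $i$ contributes no summand to $d_m^*$ at all.

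Second, because $d_m^*$ shifts the multidegree pair by $(\tau,\beta) \mapsto (\tau+[i], \beta+[i])$ --- preserving $\gamma = \tau-\beta$ away from the ``$-1$'' entries --- the complex $S_\q(V) t_g \otimes \Wedge_{\q^{-1}}^\bullet(V^*)$ splits as a direct sum of finite subcomplexes indexed by $\gamma$. A direct inspection of each such subcomplex shows that the cocycle classes are precisely those represented by $v^\tau t_g \otimes (v^*)^{\wedge \beta}$ with $\tau - \beta \in C_g$ and $|\beta| = m$, and that distinct such representatives give distinct cohomology classes. Assembling over $g$, $\tau$, and $\beta$ then produces the claimed isomorphism.

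The hard part will be the final bookkeeping step: verifying simultaneously that every $v^\tau t_g \otimes (v^*)^{\wedge\beta}$ with $\tau - \beta \in C_g$ is genuinely a cocycle, that none of these representatives is a coboundary, and that they exhaust the cohomology modulo coboundary. This requires carefully tracking the way $d_m^*$ raises both $\tau$ and $\beta$ together and keeping the different multidegree strata from mixing; this is the combinatorial heart of the argument carried out in \cite{NSW}.
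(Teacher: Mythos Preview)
The paper does not give its own proof of this statement; it is quoted verbatim from \cite{NSW} and used as input for the subsequent corollary. Your sketch is the standard argument and matches what is carried out in \cite{NSW}: decompose over $g\in G$, pass to the complex \eqref{new Hom(resolution) with G}, use diagonality to reduce the differential \eqref{formula for d_m^* with G} to scalar multiplication on each monomial basis vector, and observe that the resulting complex splits into subcomplexes indexed by $\gamma=\tau-\beta$, each of which is a Koszul-type complex whose cohomology is read off from which coordinates satisfy the scalar condition in \eqref{Cg}.

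One small correction: in your displayed vanishing condition you have $\prod_s q_{s,i}^{\tau_s-\beta_s}=\lambda_{g,i}$, but the computation actually gives $\prod_s q_{i,s}^{\tau_s-\beta_s}=\lambda_{g,i}$ (equivalently $\prod_s q_{s,i}^{\beta_s-\tau_s}=\lambda_{g,i}$), which is indeed the $i$-th condition defining $C_g$. This is only a subscript swap and does not affect the argument.
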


An immediate consequence is the following. 

\begin{corollary}\label{cor: abelian}
The constant Hochschild $2$-cocycles representing elements in the cohomology
$\ds \HH^2(S_\q(V), S_\q(V) \# G)$ form a vector space with basis all
\[
t_g \ot v_r^* \wedge v_s^*,
\]
where $r<s$ and $g \in G$ satisfy $q_{rr'}q_{sr'}=\lambda_{g,r'}$ for all $r' \not \in  \{r,s\}$.
\end{corollary}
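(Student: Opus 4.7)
The plan is to deduce the corollary directly from the preceding theorem of \cite{NSW} by specializing to $m=2$ and isolating the constant (degree $-2$) part.

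First, the theorem supplies a basis for $\HH^2(S_\q(V), S_\q(V) \# G)$ consisting of elements $(v^\tau t_g)\ot (v^*)^{\wedge\beta}$ with $\beta\in\O^n$, $|\beta|=2$, and $\tau-\beta\in C_g$. Since $|\beta|=2$, one has $\beta=[r]+[s]$ for a unique pair $r<s$, so that $(v^*)^{\wedge\beta}=v_r^*\wedge v_s^*$.

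Second, I identify which of these basis vectors represent constant Hochschild $2$-cocycles. Under the identification
\[
\Hom_{S_\q(V)^e}(S_\q(V)^e\ot\Wedge_\q^{\bullet}(V),\,S_\q(V)t_g)\cong S_\q(V)t_g\ot\Wedge_{\q^{-1}}^{\bullet}(V^*),
\]
the element $(v^\tau t_g)\ot(v^*)^{\wedge\beta}$ is the map sending $v^{\wedge\beta}$ to $v^\tau t_g$, hence has degree $|\tau|-|\beta|$ as a function between graded algebras. The chain maps $\Psi_2$ and $\Phi_2$ are homogeneous, and both the Reynolds averaging $\R_2$ and the extension $\Theta_2^*$ preserve polynomial degree (as is visible from formula \eqref{composition}, since $\lexp{g}{(-)}$ is degree-preserving). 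Therefore the corresponding Hochschild $2$-cocycle on $S_\q(V)\# G$ also has degree $|\tau|-2$. Requiring this to equal $-2$ forces $\tau=0$.

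Third, with $\tau=0$ the condition $\tau-\beta=-\beta\in C_g$ reduces, via definition \eqref{Cg}, to the following: for each $i$, either $(-\beta)_i=-1$ or $\prod_{j=1}^n q_{ij}^{-\beta_j}=\lambda_{g,i}$. For $i\in\{r,s\}$ the first alternative holds automatically, while for $i=r'\notin\{r,s\}$ the condition becomes $q_{r'r}^{-1}q_{r's}^{-1}=\lambda_{g,r'}$. Using $q_{ji}=q_{ij}^{-1}$ this rewrites as $q_{rr'}q_{sr'}=\lambda_{g,r'}$, which is exactly the condition in the corollary.

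The proof is essentially bookkeeping, and there is no serious obstacle. The one point requiring care is the second step: verifying that the maps $\Theta_2^*\R_2\Psi_2^*$ used to realize cocycles on the smash product preserve the grading, so that $|\tau|=0$ really does correspond to the paper's definition of "constant." This is immediate from the explicit formulas \eqref{psi-two} and \eqref{composition}.
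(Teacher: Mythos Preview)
Your proof is correct and follows exactly the approach implicit in the paper, which states the corollary as ``an immediate consequence'' of the cited theorem without further argument. You have faithfully filled in the details: specialize to $m=2$ so $\beta=[r]+[s]$, observe that ``constant'' forces $\tau=0$ (the paper makes this identification at the end of Section~\ref{computing HH^2} via the form of $\Psi_2$, matching your second step), and unwind the condition $-\beta\in C_g$ to obtain $q_{rr'}q_{sr'}=\lambda_{g,r'}$ for $r'\notin\{r,s\}$.
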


Note that the $S_\q(V)$-bimodule structure of $S_\q(V) \#_\alpha G$ does not
depend on the $2$-cocycle $\alpha$, and so
$\HH^2(S_\q(V), S_\q(V) \#_\alpha G) = \HH^2(S_\q(V), S_\q(V) \# G)$.

Let $\mathcal R$ denote a complete set of representatives of conjugacy classes in $G$, 
let $C_G(a)$ denote the centralizer of $a$ in $G$, and let $[G/C_G(a)]$ denote
a complete set of representatives of left cosets of $C_G(a)$ in $G$.
In the theorem below, the notation $\delta_{i,j}$ is the Kronecker
delta. 

\begin{theorem} \label{G-invariant elements}
The constant Hochschild $2$-cocycles representing elements in the cohomology
$\ds \HH^2(S_\q(V) \#_\alpha G)$ form a vector space with basis all
\[
\sum_{g \in [G/C_G(a)]}
\frac{\alpha(g, a)}{\alpha(gag^{-1}, g)} 
\lambda_{g,r}^{-1}\lambda_{g,s}^{-1} t_{gag^{-1}} \ot v_r^* \wedge v_s^*,
\]
where $r<s$ and $a \in \mathcal{R}$ satisfy $q_{rr'}q_{sr'}=\lambda_{a,r'}$ for all $r' \not \in  \{r,s\}$,
and $\lambda_{h,r}\lambda_{h,s}=\frac{\alpha(h,a)}{\alpha(a,h)}$ for all $h \in C_G(a)$.
\end{theorem}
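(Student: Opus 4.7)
My plan is to combine Theorem~\ref{thm: CGW}, which gives an isomorphism $\HH^2(S_\q(V),S_\q(V)\#_\alpha G)^G\cong\HH^2(S_\q(V)\#_\alpha G)$, with Corollary~\ref{cor: abelian}, which describes a basis for the space of constant Hochschild $2$-cocycles before taking $G$-invariants. Since the $S_\q(V)$-bimodule structure of $S_\q(V)\#_\alpha G$ does not depend on $\alpha$, the basis in Corollary~\ref{cor: abelian} still applies: it consists of the elements $t_g\ot v_r^*\wedge v_s^*$ with $r<s$ and $q_{rr'}q_{sr'}=\lambda_{g,r'}$ for all $r'\notin\{r,s\}$. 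It therefore remains to determine the $G$-invariants in the span of this basis.

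Next I would record the $G$-action on a basis element. For the diagonal dual action $\lexp{h}{v_i^*}=\lambda_{h,i}^{-1}v_i^*$, one obtains $\lexp{h}{(v_r^*\wedge v_s^*)}=\lambda_{h,r}^{-1}\lambda_{h,s}^{-1}v_r^*\wedge v_s^*$, and for the conjugation action on $t_g$ the computation $t_ht_gt_h^{-1}=\tfrac{\alpha(h,g)}{\alpha(hgh^{-1},h)}t_{hgh^{-1}}$ was already carried out inside the proof of Theorem~\ref{thm: constant}. Combining these,
\[
\lexp{h}{(t_g\ot v_r^*\wedge v_s^*)}
= \frac{\alpha(h,g)}{\alpha(hgh^{-1},h)}\,\lambda_{h,r}^{-1}\lambda_{h,s}^{-1}\,t_{hgh^{-1}}\ot v_r^*\wedge v_s^*.
\]
Because each $\lambda_{-,r'}$ is a one-dimensional character of $G$ and therefore constant on conjugacy classes, the cocycle condition $q_{rr'}q_{sr'}=\lambda_{g,r'}$ is preserved under conjugation of $g$, so we may organize our search for invariants by conjugacy class.

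For each $a\in\R$ and each $r<s$ satisfying $q_{rr'}q_{sr'}=\lambda_{a,r'}$ for all $r'\notin\{r,s\}$, I would write a candidate
\[
\xi_{a,r,s} = \sum_{g\in[G/C_G(a)]} c_g\, t_{gag^{-1}}\ot v_r^*\wedge v_s^*,
\]
normalize by taking the coefficient $c_{\text{id}}$ of $t_a$ to equal $1$, and impose $\lexp{h}{\xi_{a,r,s}}=\xi_{a,r,s}$. Evaluating invariance with $h$ equal to the coset representative taking $a$ to $gag^{-1}$ forces
\[
c_g = \frac{\alpha(g,a)}{\alpha(gag^{-1},g)}\,\lambda_{g,r}^{-1}\lambda_{g,s}^{-1},
\]
recovering the formula in the statement. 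Applying invariance instead at $h\in C_G(a)$ and looking at the coefficient of $t_a$ forces $\lambda_{h,r}\lambda_{h,s}=\alpha(h,a)/\alpha(a,h)$ for every $h\in C_G(a)$; if this centralizer condition fails, no nonzero $G$-invariant supported on the conjugacy class of $a$ exists for the pair $(r,s)$, while if it holds, the displayed formula provides one.

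The hard part, and the main obstacle I foresee, is the consistency verification: showing first that $c_g$ depends only on the coset $gC_G(a)$ and not on the chosen representative, and second that the resulting $\xi_{a,r,s}$ is invariant under \emph{every} $h\in G$ rather than only the particular $h$ used above. Both reduce to manipulations of the $2$-cocycle identity for $\alpha$, with the centralizer relation $\lambda_{h,r}\lambda_{h,s}=\alpha(h,a)/\alpha(a,h)$ exactly absorbing the discrepancy when two coset representatives differ by an element of $C_G(a)$. Once these checks are in place, the elements $\xi_{a,r,s}$ associated to distinct admissible triples $(a,r,s)$ have pairwise disjoint supports among the $t_{gag^{-1}}\ot v_r^*\wedge v_s^*$, so they are linearly independent, and together with Corollary~\ref{cor: abelian} they span the subspace of $G$-invariant constant $2$-cocycles, yielding the claimed basis for $\HH^2(S_\q(V)\#_\alpha G)$.
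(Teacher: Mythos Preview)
Your proposal is correct and follows essentially the same route as the paper: reduce to Corollary~\ref{cor: abelian} via Theorem~\ref{thm: CGW}, compute the $G$-action on a basis vector exactly as you wrote, and then determine invariants conjugacy class by conjugacy class. The paper carries out in full the cocycle manipulation you flag as the ``hard part'' (independence of $c_g$ on the coset representative, using the $2$-cocycle identity applied successively to the triples $(g,h,a)$, $(g,a,h)$, and $(gag^{-1},g,h)$), and then declares the resulting element ``evidently $G$-invariant''; indeed, once well-definedness is in hand, invariance under an arbitrary $h\in G$ is another cocycle identity of the same flavor that does not require the centralizer condition, so your second check collapses into the first.
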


\begin{proof}
We will show that the space of $G$-invariant elements of the vector space given 
in Corollary~\ref{cor: abelian} is precisely the vector space stated in the theorem. 
The stated result will then follow from Theorem~\ref{thm: CGW}. 

First, we will show that the scalar 
$\frac{\alpha(g, a)}{\alpha(gag^{-1}, g)}\lambda_{g,r}^{-1}\lambda_{g,s}^{-1}$ 
is independent of choice of representative $g$ of a coset of $C_G(a)$ under
the assumption that $\lambda_{h,r}\lambda_{h,s}=\frac{\alpha(h,a)}{\alpha(a,h)}$ for all $h\in C_G(a)$.
Suppose that $gag^{-1} = g'ag'^{-1}$. Then $g'=gh$ for some $h \in C_G(a)$, and we have
\[
\frac{\alpha(g', a)}{\alpha(g'ag'^{-1}, g')}\lambda_{g',r}^{-1}\lambda_{g',s}^{-1}
= \frac{\alpha(gh, a)}{\alpha(gag^{-1}, gh)}\lambda_{g,r}^{-1}\lambda_{g,s}^{-1}\lambda_{h,r}^{-1}\lambda_{h,s}^{-1}
\]
Substituting $\lambda_{h,r}\lambda_{h,s}=\frac{\alpha(h,a)}{\alpha(a,h)}$ yields
\[
\frac{\alpha(gh, a)\alpha(a,h)}{\alpha(gag^{-1}, gh)\alpha(h,a)}\lambda_{g,r}^{-1}\lambda_{g,s}^{-1}.
\]
Applying the $2$-cocycle condition of $\alpha$ to the triple $(g,h,a)$ gives
$\frac{\alpha(gh,a)}{\alpha(h,a)} = \frac{\alpha(g,ha)}{\alpha(g,h)}$. Making
this substitution in the expression above yields
\[
\frac{\alpha(g, ha)\alpha(a,h)}{\alpha(gag^{-1}, gh)\alpha(g,h)}\lambda_{g,r}^{-1}\lambda_{g,s}^{-1}.
\]
Applying the $2$-cocycle condition of $\alpha$ to the triple $(g,a,h)$ gives
$\alpha(g,ha)\alpha(a,h)= \alpha(ga,h)\alpha(g,a)$. Making
this substitution in the expression above yields
\[
\frac{\alpha(ga, h)\alpha(g,a)}{\alpha(gag^{-1}, gh)\alpha(g,h)}\lambda_{g,r}^{-1}\lambda_{g,s}^{-1}.
\]
Finally, applying the $2$-cocycle condition of $\alpha$ to the triple $(gag^{-1},g,h)$ gives
$\frac{\alpha(ga,h)}{\alpha(gag^{-1},gh)\alpha(g,h)} = \frac{1}{\alpha(gag^{-1},g)}$. Making
this substitution in the expression above yields
\[
\frac{\alpha(g, a)}{\alpha(gag^{-1},g)}\lambda_{g,r}^{-1}\lambda_{g,s}^{-1},
\]
proving that the scalar above is independent of choice of representative $g$ of a coset of $C_G(a)$ under
the assumption that $\lambda_{h,r}\lambda_{h,s}=\frac{\alpha(h,a)}{\alpha(a,h)}$ for all $h\in C_G(a)$.
Thus, each of the alleged basis element is well-defined, and is
evidently $G$-invariant. 

Conversely, let $\eta = \sum_{a \in G} \sum \eta_{rs}^a t_a \ot v_r^* \wedge v_s^*$,
where $\eta_{rs}^a$ are scalars and the second sum runs over all $r<s$ that satisfy 
$q_{rr'}q_{sr'}=\lambda_{a,r'}$ for all $ r' \not \in \{r,s\}$.
We have
\[
\lexp{g}{\eta} = \sum_{a \in G} \eta_{rs}^a t_gt_a(t_g)^{-1} \ot 
\lexp{g}{(v_r^*)} \wedge \lexp{g}{(v_s^*)}
= \sum_{a \in G} \frac{\alpha(g, a)}{\alpha(gag^{-1}, g)} 
 \lambda_{g, r}^{-1}\lambda_{g, s}^{-1} \eta_{rs}^a t_{gag^{-1}} \ot v_r^* \wedge v_s^*.
\]
Assume that  $\eta$ is $G$-invariant. Then 
\[
\eta_{rs}^{gag^{-1}} = \frac{\alpha(g,a)}{\alpha(gag^{-1},g)} \lambda_{g, r}^{-1}\lambda_{g, s}^{-1} \eta_{rs}^a,
\]
for all $g \in G$. Letting $g=h \in C_G(a)$ yields 
\[
\lambda_{h,r}\lambda_{h,s}=\frac{\alpha(h,a)}{\alpha(a,h)},
\]
showing that $\eta$ is in the span of the alleged basis elements.
The stated result now follows from Theorem~\ref{thm: CGW}.

\end{proof}

The proof of the
following theorem involves the maps $\Theta_2^*, \R_2$, and $\Psi_2^*$
defined in Section~\ref{computing HH^2}.

\begin{theorem}
The maps $\kappa: V \times V \to \CC^\alpha G$ for which $\mH_{\q, \kappa, \alpha}$ is a 
twisted quantum Drinfeld Hecke algebra form a vector space with basis consisting of maps 
\[
f_{r,s,a}: V \times V \to \CC^\alpha G: (v_i, v_j) \mapsto 
    (\delta_{i,r} \delta_{j, s} - q_{sr} \delta_{i,s} \delta_{j, r}) \sum_{g\in [G/C_G(a)]} 
    \frac{\alpha(g, a)}{\alpha(gag^{-1}, g)} \lambda_{g,r}^{-1} \lambda_{g,s}^{-1} \ t_{gag^{-1}}
\]
where $r < s$ and $a\in {\mathcal R}$ satisfy  
$ \ q_{rr'}q_{sr'}=\lambda_{a,r'}$ for all $r' \not \in \{r,s\}$ and $\lambda_{h,r}\lambda_{h,s}
=\frac{\alpha(h,a)}{\alpha(a,h)}$ for all $h\in C_G(a)$.
\end{theorem}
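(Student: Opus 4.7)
The plan is to combine the bijection from Theorem~\ref{thm: bijection} with the explicit basis computed in Theorem~\ref{G-invariant elements}. By Theorem~\ref{thm: bijection}, the map sending (an equivalence class of) a constant Hochschild $2$-cocycle $\mu_1$ on $S_\q(V)\#_\alpha G$ to its quantum skew-symmetrization $\kappa$ is a bijection onto the set of $\kappa$'s giving rise to twisted quantum Drinfeld Hecke algebras. Since quantum skew-symmetrization is $\CC$-linear in $\mu_1$, this bijection identifies the parameter space with the space of constant $2$-cocycle classes in $\HH^2(S_\q(V)\#_\alpha G)$ as vector spaces. Theorem~\ref{G-invariant elements} exhibits an explicit basis
\[
\eta_{r,s,a} := \sum_{g \in [G/C_G(a)]} \frac{\alpha(g,a)}{\alpha(gag^{-1},g)} \lambda_{g,r}^{-1}\lambda_{g,s}^{-1}\, t_{gag^{-1}} \ot v_r^*\wedge v_s^*
\]
for this space, indexed by pairs $(r<s, a\in\mathcal{R})$ satisfying $q_{rr'}q_{sr'}=\lambda_{a,r'}$ for $r'\notin\{r,s\}$ and $\lambda_{h,r}\lambda_{h,s}=\alpha(h,a)/\alpha(a,h)$ for $h\in C_G(a)$. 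The task reduces to computing the quantum skew-symmetrization of each $\eta_{r,s,a}$ and verifying that it equals $f_{r,s,a}$.

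To carry out this computation, I invoke the proof of Theorem~\ref{thm: constant}, where it is shown that if $\eta = \sum_g\sum_{i<j} \eta^g_{ij}\, t_g\ot v_i^*\wedge v_j^*$ and $\mu_1 = \Theta_2^*\R_2\Psi_2^*(\eta)$, then the quantum skew-symmetrization $\kappa$ of $\mu_1$ satisfies $\kappa_g(v_i,v_j) = \eta^g_{ij}$ for all $i<j$. Applied to $\eta_{r,s,a}$, this immediately yields
\[
\kappa(v_r,v_s) = \sum_{g \in [G/C_G(a)]} \frac{\alpha(g,a)}{\alpha(gag^{-1},g)}\lambda_{g,r}^{-1}\lambda_{g,s}^{-1}\,t_{gag^{-1}},
\]
$\kappa(v_i,v_j)=0$ for all other pairs with $i<j$, and by the skew-symmetry relation $\kappa(v_i,v_j)=-q_{ij}\kappa(v_j,v_i)$, the value $\kappa(v_s,v_r) = -q_{sr}\kappa(v_r,v_s)$. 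Collecting these into the single formula $\kappa(v_i,v_j) = (\delta_{i,r}\delta_{j,s} - q_{sr}\delta_{i,s}\delta_{j,r})\,\kappa(v_r,v_s)$ shows $\kappa = f_{r,s,a}$.

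Linear independence of the $f_{r,s,a}$ will then follow directly from linear independence of the $\eta_{r,s,a}$ modulo coboundaries, since the map $[\mu_1]\mapsto\kappa$ is a linear isomorphism; in fact the $f_{r,s,a}$ are supported on disjoint coordinate pairs $(r,s)$ or on distinct conjugacy classes $[a]$, so direct inspection also gives independence. Conversely, any $\kappa$ in the parameter space corresponds via Theorem~\ref{thm: bijection} to some cohomology class, which is a linear combination of the $\eta_{r,s,a}$; applying the isomorphism expresses $\kappa$ as a linear combination of the $f_{r,s,a}$. The main bookkeeping hurdle will be keeping the cocycle normalization and coset representative conventions consistent; the nontrivial content, namely well-definedness of each summand independently of the coset representative $g$ of $C_G(a)$, is already verified inside the proof of Theorem~\ref{G-invariant elements} and carries over unchanged. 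Thus the proof reduces almost entirely to invoking the two previous theorems and performing the skew-symmetrization readout.
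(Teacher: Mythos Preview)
Your proposal is correct and follows essentially the same route as the paper: invoke Theorem~\ref{G-invariant elements} for the basis $\eta_{r,s,a}$ of constant cocycle classes, use the identity $\kappa(v_i,v_j)=\eta(v_i\wedge v_j)$ established in the proof of Theorem~\ref{thm: constant} to read off the skew-symmetrization, and then appeal to Theorem~\ref{thm: bijection} (together with Theorem~\ref{thm: CGW}) for the bijection between such classes and the parameter space. The paper's proof is just a terser version of what you wrote.
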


\begin{proof}
Let $\eta = \sum_{g \in [G/C_G(a)]}
\frac{\alpha(g, a)}{\alpha(gag^{-1}, g)} 
\lambda_{g,r}^{-1}\lambda_{g,s}^{-1} t_{gag^{-1}} \ot v_r^* \wedge v_s^*$,
where $r<s$ and $a \in \mathcal{R}$ satisfy the conditions specified in Theorem~\ref{G-invariant elements}.
In the  proof of Theorem~\ref{thm: constant} we saw that 
$[\Theta_2^* \R_2   \Psi_2^* (\eta)](v_i \ot v_j - q_{ij} v_j \ot v_i) = \eta(v_i \wedge v_j)$,
and the latter is equal to 
\[
(\delta_{i,r} \delta_{j, s} - q_{sr} \delta_{i,s} \delta_{j, r}) \sum_{g\in [G/C_G(a)]} 
    \frac{\alpha(g, a)}{\alpha(gag^{-1}, g)} \lambda_{g,r}^{-1} \lambda_{g,s}^{-1} \ t_{gag^{-1}}.
\]
The stated result now follows from Theorem~\ref{thm: CGW} and Theorem~\ref{thm: bijection}.

\end{proof}

\end{section}
\begin{section}{Symmetric groups: Natural representations} \label{natural}

In this section, we classify twisted quantum Drinfeld Hecke algebras for 
the symmetric groups $S_n, \, n \geq 4$, acting naturally on a vector space of dimension $n$.

Consider the natural action of $S_n$ on a 
vector space $V$ with ordered basis $v_1,\ldots,v_n$. 
Let $\q:= (q_{ij})_{1 \leq i,j \leq n}$ denote a tuple of
nonzero scalars for which $q_{ii}=1$ and $q_{ji}=q_{ij}^{-1}$ for all $i,j$.
The action of $S_n$
extends to an action on the quantum symmetric algebra $S_\q(V)$ by automorphisms
if and only if either $q_{ij}=1$ for all $i,j$, or 
$q_{ij}=-1$ for all $i \neq j$. The tuple corresponding to
the former will be denoted by $\mathbf{1}$, and
the tuple corresponding to the latter will be denoted by $\mathbf{-1}$. 
The action of $S_n$ on $V$ extends to an action on
the quantum exterior algebra $\Wedge_{\mathbf{-1}}$ by automorphisms.
Note that the algebra $\Wedge_{\mathbf{-1}}$ is commutative.

The Schur multiplier $\H^2(S_n,\CC^\times)$ of the symmetric group $S_n$ is
isomorphic to $\Z/2\Z$ for all $n \geq 4$ \cite{S}.  Let $\alpha$ be a $2$-cocycle
on $S_n$, and let $[\alpha]$ denote the image of $\alpha$ in $\H^2(S_n,\CC^\times)$.
A classification of twisted quantum Drinfeld Hecke algebras for 
$S_n$, acting naturally on a vector space of dimension $n$,
is given in  \cite{RS} for $[\alpha]=1$ and $\q=\mathbf{1}$,
in \cite{W} for $[\alpha] \neq 1$ and $\q=\mathbf{1}$, and
in \cite{NW} for $[\alpha]=1$ and $\q=\mathbf{-1}$.
The goal of this section is to address the remaining case: $[\alpha] \neq 1$ and $\q=\mathbf{-1}$.

Next, we recall a Schur covering group of $S_n$.
We will use it to obtain a cohomologically
nontrivial $2$-cocycle on $S_n$. 
Let $T_n$ be the group with generators
$t_1, \ldots, t_{n-1},z$ and relations
\[
\begin{aligned}[2]
z^2 &= 1; &&\\
t_r^2 &= 1, && 1 \leq r \leq n-1;\\
t_rt_s &= t_st_rz, && \text{for } |r-s|>1 \text{ and } 1 \leq r,s \leq n-1;\\
t_rt_{r+1}t_r &= t_{r+1}t_rt_{r+1}, && 1 \leq r \leq n-2;\\
zt_r &= t_rz, && 1 \leq r \leq n-1.\\
\end{aligned}
\]
The group $T_n$ is a central extension of $S_n$ by $\la z\ra$:
\[
1 \to \la z \ra \to T_n \xrightarrow{p} S_n \to 1,
\]
where the surjection $p$ sends $z$ to $1$ and sends $t_r$ to the transposition $(r \, r+1)$.
The group $T_n$ is a Schur covering group of $S_n$ \cite{S}. 

We define certain distinguished elements of $T_n$: For every
$r,s \in \{1, \ldots, n\}$, $r \neq s$, denote by $[rs]$ the
element of $T_n$ defined recursively as follows:
\[
\begin{aligned}[2]
[r \, r+1] &:= t_r, &&\\
[rs] &:= t_r[r+1 \, s]t_rz &&\text{if } r < s-1,\\
[rs] &:= [sr]z &&\text{if } r>s.
\end{aligned}
\]
Note that $p([rs]) = (rs)$.

Next, we define a section $u: S_n \to T_n: \sigma \mapsto u_\sigma$ of the surjection $p: T_n \to S_n$.
If $\sigma \in S_n$ is the $k$-cycle $(a_1, \ldots, a_k)$, where 
$a_1, \ldots, a_k \in \{1, \ldots, n\}$ and $a_1$ is the smallest
element of the set $\{a_1, \ldots, a_k\}$, then define
\[
u_\sigma := [a_1a_k][a_1a_{k-1}] \cdots [a_1a_2].
\]
If $\sigma \in S_n$ is the product $(a_1, \ldots, a_k)(b_1, \ldots, b_l)\cdots$ of disjoint cycles,
where $a_1$ is the smallest element of the set $\{a_1, \ldots, a_k\}$, $b_1$ is the
smallest element of the set $\{b_1, \ldots, b_l\}$, and so on, and
$a_1 < b_1 < \cdots$, then define
\[
u_\sigma := u_{(a_1, \ldots, a_k)}u_{(b_1, \ldots, b_l)} \cdots. 
\]
It is evident that $u:S_n \to T_n$ is a section, that is, $pu=\id_{S_n}$.

Consider any irreducible representation of the group $T_n$. Since the element $z$ is central
and has order two, it must necessarily act on this representation as multiplication by either $1$ or $-1$.
Assume the latter. In this case, we obtain a cohomologically nontrivial
(normalized) $2$-cocyle $\alpha: S_n \times S_n \to \CC^\times$ defined by
\begin{equation}\label{defn of alpha}
\alpha(\sigma,\tau):=
\begin{cases}
1 & \text{ if } u_\sigma u_\tau u_{\sigma \tau}^{-1} = 1,\\
-1 & \text{ if } u_\sigma u_\tau u_{\sigma \tau}^{-1} = z,
\end{cases}
\end{equation}
for all $\sigma, \tau \in S_n$.

Our goal is to classify twisted quantum Drinfeld Hecke algebras
associated to the quadruple $(S_n, V, \mathbf{-1}, \alpha)$, where $V$ is 
the natural representation of $S_n$ and $\mathbf{-1}$ is the tuple 
defined earlier in this section. To this end, in what follows, we establish
several lemmas that will aid in accomplishing our goal.

Since the subgroup $\la z \ra$ of $T_n$ is central, there is an action
of $S_n$ on $T_n$ induced by conjugation. If $\sigma$ belongs to $S_n$ and $\nu$ belongs to $T_n$,
we denote by $\sigma \rt \nu$ the result of $\sigma$ acting upon $\nu$.
We have $\sigma \rt \nu = \hat{\sigma} \nu (\hat{\sigma})^{-1}$,
where $\hat{\sigma}$ is any element in the set $p^{-1}(\sigma)$.

For each $\sigma \in S_n$, let $\epsilon(\sigma)$ denote the signature of $\sigma$:
\[
\epsilon(\sigma) =
\begin{cases}
0 & \text{ if $\sigma$ is an even permutation}, \\
1 & \text{ if $\sigma$ is an odd permutation}.
\end{cases}
\]
The following result from \cite{V} will be put to use shortly.

\begin{lemma} \label{V lemma}
For all distinct $r,s \in \{1, \ldots, n\}$, and all $\sigma \in S_n$, we have
\[
\sigma \rt [rs]= [\sigma(r)\sigma(s)]z^{\epsilon(\sigma)}.
\]
\end{lemma}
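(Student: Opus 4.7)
The plan is to reduce the statement to checking on a generating set of $S_n$, then handle the simple-transposition case by a direct calculation using the defining relations of $T_n$ and the recursive definition of $[rs]$.

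First, I would observe that the formula is compatible with the group structure on $S_n$. Since $\langle z \rangle$ is central in $T_n$, the action $\sigma \rt \nu = \hat\sigma \nu \hat\sigma^{-1}$ is a well-defined action of $S_n$ on $T_n$ by automorphisms; in particular, $z$ is fixed. So if the formula holds for $\sigma$ and for $\tau$, then
\[
(\sigma\tau)\rt[rs] = \sigma\rt(\tau\rt[rs]) = \sigma\rt\bigl([\tau(r)\tau(s)]z^{\epsilon(\tau)}\bigr) = [\sigma\tau(r)\,\sigma\tau(s)]\,z^{\epsilon(\sigma)+\epsilon(\tau)},
\]
and since $\epsilon$ is a homomorphism into $\Z/2\Z$, this equals $[\sigma\tau(r)\,\sigma\tau(s)]z^{\epsilon(\sigma\tau)}$. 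Therefore it suffices to verify the identity for $\sigma = (a,a+1)$ ranging over the Coxeter generators, taking $\hat\sigma = t_a$.

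The reduced claim is that for all $a \in \{1,\dots,n-1\}$ and all distinct $r,s$,
\[
t_a\,[rs]\,t_a \;=\; [(a,a{+}1)(r)\,(a,a{+}1)(s)]\,z,
\]
using $t_a^{-1}=t_a$. I would split into cases according to the intersection of $\{a,a{+}1\}$ with $\{r,s\}$: (i) disjoint, (ii) equal, (iii) exactly one element in common. For each case I would first dispose of the base situation where $r$ and $s$ are adjacent (so that $[rs] = t_r$ or $t_s z$), applying the commutation relation $t_rt_s = t_st_rz$ when $|r-s|>1$ and the braid relation $t_rt_{r+1}t_r = t_{r+1}t_rt_{r+1}$ when they are adjacent. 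Then I would proceed by induction on $|r-s|$, using the recursions $[rs] = t_r[r{+}1,s]t_r z$ for $r<s-1$ and $[rs]=[sr]z$ for $r>s$ to reduce to a smaller instance, keeping careful track of the accumulated powers of $z$.

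The main obstacle is the bookkeeping in case (iii), where applying $t_a$ modifies exactly one index in $\{r,s\}$: one must verify that the signs and $z$-powers produced by repeatedly invoking $t_rt_s = t_st_r z$ precisely match the extra $z$ appearing on the right-hand side. The braid relation is crucial when $a+1$ coincides with one endpoint of the bracket and $a$ is adjacent to the other, since the recursion $[a,a{+}2]=t_a t_{a+1} t_a z$ is where a pure commutation is unavailable and the braid must be used. Once these recursive identities are verified for the Coxeter generators, the first paragraph's multiplicativity argument upgrades them to all of $S_n$, yielding the lemma.
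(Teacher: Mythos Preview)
The paper does not give its own proof of this lemma: it is quoted from Vendramin's paper \cite{V} (see the sentence immediately preceding the statement). So there is no ``paper's proof'' to compare against beyond the citation.

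Your outline is the natural direct argument and is sound. Multiplicativity of both sides in $\sigma$ (using that $z$ is central and fixed, and that $\epsilon$ is a homomorphism to $\Z/2\Z$) legitimately reduces the claim to the Coxeter generators $t_a$, and the three-way case split on $|\{a,a{+}1\}\cap\{r,s\}|$ together with induction on $|r-s|$ and the defining relations of $T_n$ finishes it. One remark on the induction: in your case~(i) (disjoint support), unfolding $[rs]=t_r[r{+}1,s]t_rz$ can produce an instance where $\{a,a{+}1\}$ meets $\{r{+}1,s\}$ (e.g.\ $a=r{+}1$) even though it missed $\{r,s\}$; so the induction hops between your cases rather than staying within a single one. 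Likewise, when $a=r\pm 1$ you cannot commute $t_a$ past $t_r$ and must use the braid relation before invoking the inductive hypothesis. These are precisely the bookkeeping issues you flag, and they do resolve cleanly, but it is worth saying explicitly that the induction is on $|r-s|$ simultaneously across all three cases rather than case by case.
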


\noindent For later use, we record the following two lemmas.

\begin{lemma}\label{3-cycle relation}
For all distinct $r,r',s,s' \in \{1, \ldots, n\}$, we have
\[
[rs][sr']z = [rr'][rs] = [r's][rr']z.
\]
\end{lemma}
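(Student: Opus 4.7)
The plan is to derive both equalities from Lemma~\ref{V lemma} by choosing the acting permutation to be a suitable transposition and then using the centrality of $z$ to shuffle factors. The key observation is that for an odd permutation $\sigma$ Lemma~\ref{V lemma} produces a factor of $z$ relating the conjugate $\hat\sigma[rs]\hat\sigma^{-1}$ to the ``expected'' transposition symbol $[\sigma(r)\sigma(s)]$, which is precisely the $z$ needed in the stated identities.

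For the first equality, I would take $\sigma=(rs)$, an odd permutation (so $\epsilon(\sigma)=1$) that sends $s\mapsto r$ and fixes $r'$. Lemma~\ref{V lemma} applied to $[sr']$ then yields $\sigma\rt[sr']=[\sigma(s)\sigma(r')]z=[rr']z$. Since the conjugation $\sigma\rt\nu=\hat\sigma\nu\hat\sigma^{-1}$ is insensitive to the choice of lift (any two lifts in $p^{-1}(\sigma)$ differ by the central element $z$), I may pick $\hat\sigma=[rs]\in p^{-1}((rs))$, which gives $[rs][sr'][rs]^{-1}=[rr']z$. Right-multiplying by $[rs]$ and using the centrality of $z$ yields $[rs][sr']=[rr'][rs]z$; multiplying once more by $z$ and invoking $z^2=1$ produces the first equality $[rs][sr']z=[rr'][rs]$.

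For the second equality I would instead take $\sigma=(rr')$, again odd, which sends $r\mapsto r'$ and fixes $s$. Lemma~\ref{V lemma} now gives $\sigma\rt[rs]=[r's]z$. Taking the lift $\hat\sigma=[rr']$ turns this into $[rr'][rs][rr']^{-1}=[r's]z$, and right-multiplying by $[rr']$ and moving $z$ past $[rr']$ (again by centrality) gives $[rr'][rs]=[r's][rr']z$. Chaining the two identities then yields the full statement $[rs][sr']z=[rr'][rs]=[r's][rr']z$.

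The argument is routine once Lemma~\ref{V lemma} is in hand; no real obstacle arises, and one need not unwind the recursive definition of $[rs]$ at all. The only thing to watch is the careful bookkeeping of the central factor $z$, but since every relevant conjugator is an odd permutation the signature contributions on both sides of each derivation match by design.
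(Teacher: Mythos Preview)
Your proof is correct and is essentially the same as the paper's: both derive each equality by applying Lemma~\ref{V lemma} with a transposition as the acting permutation and then rearranging using the centrality of $z$. The only cosmetic difference is that the paper conjugates $[rr']$ by $[rs]^{-1}$ to obtain $[sr']z$, whereas you conjugate $[sr']$ by $[rs]$ to obtain $[rr']z$; these are equivalent manipulations.
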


\begin{proof}
We have $[rs]^{-1}[rr'][rs] = (rs)^{-1} \rt [rr'] = (rs) \rt [rr']$, and
by Lemma~\ref{V lemma} the last expression equals $[sr']z$, proving the first equality.
The second equality is proved similarly.
\end{proof}

\begin{lemma}\label{commuting relation}
For all distinct $r,r',s,s' \in \{1, \ldots, n\}$, we have
\[
[rs][r's'] = [r's'][rs]z.
\]
\end{lemma}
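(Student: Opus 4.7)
The plan is to derive the relation $[rs][r's'] = [r's'][rs]z$ by computing $[rs][r's'][rs]^{-1}$ via the conjugation action of $S_n$ on $T_n$ and invoking Lemma~\ref{V lemma}.

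First, I would recall that the conjugation action is defined by $\sigma \rt \nu = \hat{\sigma} \nu \hat{\sigma}^{-1}$ for any lift $\hat{\sigma} \in p^{-1}(\sigma)$, which is well-defined because $\langle z \rangle$ is central in $T_n$. In particular, taking $\hat\sigma = [rs]$, which is a lift of the transposition $(rs)$, we have
\[
[rs][r's'][rs]^{-1} = (rs) \rt [r's'].
\]

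Next, I would apply Lemma~\ref{V lemma} with $\sigma = (rs)$. Since $r, r', s, s'$ are assumed to be pairwise distinct, the transposition $(rs)$ fixes both $r'$ and $s'$, so $(rs)(r') = r'$ and $(rs)(s') = s'$. The lemma then yields
\[
(rs) \rt [r's'] = [(rs)(r') \, (rs)(s')] \, z^{\epsilon((rs))} = [r's'] \, z,
\]
since any transposition is an odd permutation and hence $\epsilon((rs)) = 1$.

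Combining these two displays gives $[rs][r's'][rs]^{-1} = [r's']z$. Multiplying both sides on the right by $[rs]$ and using the centrality of $z$ in $T_n$ yields the desired identity
\[
[rs][r's'] = [r's'] \, z \, [rs] = [r's'][rs] \, z.
\]
There is no genuine obstacle in this argument; the only thing to be careful about is that $[rs]^{-1}$ exists and is some element of $T_n$ (a priori differing from $[rs]$ by a power of $z$), but this does not matter because we never need an explicit formula for it — we simply cancel $[rs]^{-1}[rs]$ at the end.
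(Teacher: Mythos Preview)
Your proof is correct and follows essentially the same approach as the paper: compute $[rs][r's'][rs]^{-1} = (rs)\rt[r's']$ and apply Lemma~\ref{V lemma} to obtain $[r's']z$. The only difference is that you spell out a few more details (that $(rs)$ fixes $r'$ and $s'$ and that $\epsilon((rs))=1$), which the paper leaves implicit.
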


\begin{proof}
We have $[rs][r's'][rs]^{-1} = (rs) \rt [r's']$, and
by Lemma~\ref{V lemma} the last expression equals $[r's']z$.
\end{proof}

For all distinct $r,s,r',s' \in \{1, \ldots, n\}$, let
$d(r,s,r',s')$ denote the number of inequalities 
\[
\begin{aligned}[1]
\min\{r,s\} &> \min\{r',s'\},\\
r &> s,\\
r' &> s'.
\end{aligned}
\]
that hold.
For all distinct $r,s,r',s' \in \{1, \ldots, n\}$, and all $\sigma \in S_n$, 
define
\[
d_\sigma(r,s,r',s') := d(\sigma(r),\sigma(s),\sigma(r'),\sigma(s')).
\]

\noindent For later use, we record the following obvious result.

\begin{lemma}\label{props of d}
For all distinct $r,s,r',s' \in \{1, \ldots, n\}$, we have
\[
|d(r,s,r',s') - d(r,s,s',r')| = 1 =
|d(r,s,r',s') - d(s,r,r',s')|.
\]
\end{lemma}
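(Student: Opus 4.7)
The plan is to argue directly from the definition of $d(r,s,r',s')$ that each of the two swaps appearing in the statement toggles the truth value of exactly one of the three defining inequalities, and leaves the other two unchanged.

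First I would list the three inequalities that $d$ counts, calling them $(\mathrm{I})$, $(\mathrm{II})$, $(\mathrm{III})$:
\[
(\mathrm{I})\ \min\{r,s\} > \min\{r',s'\}, \qquad (\mathrm{II})\ r>s, \qquad (\mathrm{III})\ r'>s'.
\]
Next I would observe the key symmetry: $\min\{r,s\} = \min\{s,r\}$ and $\min\{r',s'\} = \min\{s',r'\}$, so inequality $(\mathrm{I})$ is invariant under swapping $r \leftrightarrow s$ and under swapping $r' \leftrightarrow s'$.

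Then I would analyze each of the two swaps. Under $s \leftrightarrow r'$ — wait, the statement compares $d(r,s,r',s')$ with $d(r,s,s',r')$ and with $d(s,r,r',s')$, so the swaps are $r' \leftrightarrow s'$ and $r \leftrightarrow s$. Under $r' \leftrightarrow s'$: inequality $(\mathrm{I})$ is unchanged by the symmetry of the minimum, inequality $(\mathrm{II})$ does not involve $r'$ or $s'$ and so is unchanged, while inequality $(\mathrm{III})$ becomes $s' > r'$, the negation of $r' > s'$ (using $r' \neq s'$). Thus exactly one of the three inequalities flips its truth value, so $d(r,s,r',s')$ and $d(r,s,s',r')$ differ by exactly $\pm 1$, giving $|d(r,s,r',s') - d(r,s,s',r')| = 1$. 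The analysis for the swap $r \leftrightarrow s$ is entirely parallel, with the roles of $(\mathrm{II})$ and $(\mathrm{III})$ interchanged.

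There is really no obstacle here: the lemma is labeled \emph{obvious} precisely because it reduces to the bookkeeping just described, using only that $\min$ is symmetric in its two arguments, that $r\neq s$ and $r'\neq s'$, and that the inequalities $(\mathrm{II})$ and $(\mathrm{III})$ each involve only one of the two pairs. No case analysis beyond the single observation per swap is needed.
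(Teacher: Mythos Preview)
Your argument is correct and is exactly the unpacking that the paper has in mind: the lemma is stated without proof, introduced only with the phrase ``we record the following obvious result.'' Your observation that each of the two swaps $r'\leftrightarrow s'$ and $r\leftrightarrow s$ leaves inequality $(\mathrm{I})$ fixed (by symmetry of $\min$), leaves one of $(\mathrm{II}),(\mathrm{III})$ fixed (since it does not involve the swapped pair), and strictly negates the remaining inequality (since the entries are distinct) is the whole content.
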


\noindent We will need the following lemma. It is a generalization of \cite[Lemma~3.7]{V}.

\begin{lemma}\label{section properties}
Let $\sigma$ be any element of $S_n$.
\begin{enumerate}
\item[(a)] For all $r,s \in \{1, \ldots, n\}$ with $r<s$, we have
\[
\sigma \rt u_{(rs)} =
\begin{cases}
u_{\sigma (rs) \sigma^{-1}} z^{\epsilon(\sigma)} & \text{ if } \sigma(r) < \sigma(s),\\
u_{\sigma (rs) \sigma^{-1}} z^{\epsilon(\sigma)+1} & \text{ if } \sigma(r) > \sigma(s).
\end{cases}
\]
\item[(b)] For all distinct $r,s,r',s' \in \{1, \ldots, n\}$ with $r<s$, $r'<s'$, and 
$r<r'$, we have
\[
\sigma \rt u_{(rs)(r's')} = u_{\sigma (rs)(r's') \sigma^{-1}} z^{d_\sigma(r,s,r',s')}
\]
\item[(c)] For all distinct $r,s,r' \in \{1, \ldots, n\}$ with $r<s$ and $r<r'$, we have
\[
\sigma \rt u_{(rsr')} = u_{\sigma (rsr') \sigma^{-1}}.
\]
\end{enumerate}
\end{lemma}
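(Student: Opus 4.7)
The plan is to reduce each part to the explicit formula for $u_\sigma$ in the standard form (smallest element first in each cycle, cycles ordered by increasing minimum), then transport through conjugation by $\sigma$ using Lemma~\ref{V lemma}, and finally reshuffle the resulting product of brackets $[\cdot\,\cdot]$ into the standard form for the image permutation. Every reshuffling step picks up an explicit power of $z$, either from the defining relation $[xy]=[yx]z$ (for $x>y$) or from Lemma~\ref{commuting relation}; the proof in each part reduces to accounting for these powers.

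Part (a) is essentially immediate. Since $r<s$, we have $u_{(rs)}=[rs]$, so Lemma~\ref{V lemma} gives $\sigma\rt u_{(rs)} = [\sigma(r)\sigma(s)]\,z^{\epsilon(\sigma)}$. If $\sigma(r)<\sigma(s)$, the bracket is already $u_{\sigma(rs)\sigma^{-1}}$; if $\sigma(r)>\sigma(s)$, apply $[\sigma(r)\sigma(s)]=[\sigma(s)\sigma(r)]z$ to pick up the extra factor of $z$.

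Part (b) is a bookkeeping argument. The hypotheses $r<s$, $r'<s'$, $r<r'$ force $u_{(rs)(r's')}=[rs][r's']$. Applying Lemma~\ref{V lemma} to each factor and using $z^{2\epsilon(\sigma)}=1$, one obtains $\sigma\rt u_{(rs)(r's')}=[\sigma(r)\sigma(s)]\,[\sigma(r')\sigma(s')]$. Now bring this to the form $u_{\sigma(rs)(r's')\sigma^{-1}}$: straighten each bracket (using $[xy]=[yx]z$), picking up a $z$ whenever $\sigma(r)>\sigma(s)$ or $\sigma(r')>\sigma(s')$, and if $\min\{\sigma(r),\sigma(s)\} > \min\{\sigma(r'),\sigma(s')\}$ swap the two disjoint brackets via Lemma~\ref{commuting relation} to pick up one more $z$. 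The total power of $z$ is by construction the count of the three strict inequalities defining $d(\sigma(r),\sigma(s),\sigma(r'),\sigma(s'))=d_\sigma(r,s,r',s')$.

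Part (c) is the main obstacle because no factor of $z$ survives on the right-hand side, so the case analysis has to cancel exactly. Here $u_{(rsr')}=[rr'][rs]$ (since $r$ is the smallest of $r,s,r'$), and Lemma~\ref{V lemma} together with $z^{2\epsilon(\sigma)}=1$ gives $\sigma\rt u_{(rsr')} = [\sigma(r)\sigma(r')]\,[\sigma(r)\sigma(s)]$. Setting $a=\sigma(r)$, $b=\sigma(s)$, $c=\sigma(r')$, I split into three cases according to which of $a,b,c$ is smallest and must verify, respectively, $[ac][ab]=[ac][ab]$, $[ac][ab]=[ba][bc]$, and $[ac][ab]=[cb][ca]$. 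The first case is trivial. In the second case, Lemma~\ref{3-cycle relation} rewrites $[ac][ab]=[ab][bc]z$, and then $[ab]=[ba]z$ (valid because $b<a$) absorbs the $z$ to give $[ba][bc]$. The third case is symmetric: $[ac]=[ca]z$ (since $c<a$) converts $[ac][ab]$ to $[ca][ab]z$, and one further application of Lemma~\ref{3-cycle relation} yields $[cb][ca]$. Each of the three resulting expressions is $u_{\sigma(rsr')\sigma^{-1}}$ written with its smallest entry first, completing the proof.
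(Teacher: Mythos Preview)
Your proof is correct and follows essentially the same approach as the paper's: reduce to brackets via Lemma~\ref{V lemma}, then normalize using the relation $[xy]=[yx]z$ and Lemmas~\ref{3-cycle relation} and~\ref{commuting relation}, counting powers of $z$ along the way. The only cosmetic difference is in the third case of part~(c), where the paper first applies Lemma~\ref{3-cycle relation} and then straightens the bracket, whereas you straighten first and then apply the lemma; both orderings are equally valid since $z$ is central.
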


\begin{proof}
(a) By Lemma~\ref{V lemma}, $\sigma \rt u_{(rs)} = \sigma \rt [rs] = [\sigma(r)\sigma(s)]z^{\epsilon(\sigma)}$.
If $\sigma(r) < \sigma(s)$, then 
\[
[\sigma(r)\sigma(s)]z^{\epsilon(\sigma)} = u_{(\sigma(r)\sigma(s))}z^{\epsilon(\sigma)}
= u_{\sigma(rs)\sigma^{-1}}z^{\epsilon(\sigma)}.
\]
If $\sigma(r) > \sigma(s)$, then 
\[
[\sigma(r)\sigma(s)]z^{\epsilon(\sigma)} = [\sigma(s)\sigma(r)]z^{\epsilon(\sigma)+1}
= u_{(\sigma(s)\sigma(r))}z^{\epsilon(\sigma)+1}
= u_{\sigma(rs)\sigma^{-1}}z^{\epsilon(\sigma)+1}.
\]

\noindent (b) Again, by Lemma~\ref{V lemma}, 
\[
\begin{aligned}[1]
\sigma \rt u_{(rs)(r's')} = \sigma \rt [rs][r's'] &= (\sigma \rt [rs])(\sigma \rt [r's']) \\
&= [\sigma(r)\sigma(s)]z^{\epsilon(\sigma)}[\sigma(r')\sigma(s')]z^{\epsilon(\sigma)}\\
&= [\sigma(r)\sigma(s)][\sigma(r')\sigma(s')].
\end{aligned}
\]
If $\min\{\sigma(r),\sigma(s)\} > \min\{\sigma(r'),\sigma(s')\}$, then using 
Lemma~\ref{commuting relation} we rewrite the product above as $[\sigma(r')\sigma(s')][\sigma(r)\sigma(s)]z$.
If $\sigma(r) > \sigma(s)$, then we replace $[\sigma(r)\sigma(s)]$ by $[\sigma(s)\sigma(r)]z$.
Similarly, if $\sigma(r') > \sigma(s')$, then we replace $[\sigma(r')\sigma(s')]$ by $[\sigma(s')\sigma(r')]z$.
Since the element $z$ has order two, the stated result follows. For example, suppose that
$d_\sigma(r,s,r',s')=3$. Then $\sigma(r)>\sigma(s), \sigma(r')>\sigma(s')$ and $\sigma(s)>\sigma(s')$,
and in this case we write
\[
\begin{aligned}[1]
[\sigma(r)\sigma(s)][\sigma(r')\sigma(s')] = [\sigma(r')\sigma(s')][\sigma(r)\sigma(s)]z
&= [\sigma(s')\sigma(r')]z[\sigma(s)\sigma(r)]zz \\
&= [\sigma(s')\sigma(r')][\sigma(s)\sigma(r)]z\\
&= u_{(\sigma(s')\sigma(r'))(\sigma(s)\sigma(r))}z \\
&= u_{\sigma(rs)(r's')\sigma^{-1}}z.
\end{aligned}
\]

\noindent (c) Again, by Lemma~\ref{V lemma}, 
\[
\begin{aligned}[1]
\sigma \rt u_{(rsr')} = \sigma \rt [rr'][rs] &= (\sigma \rt [rr'])(\sigma \rt [rs]) \\
&= [\sigma(r)\sigma(r')]z^{\epsilon(\sigma)}[\sigma(r)\sigma(s)]z^{\epsilon(\sigma)}\\
&= [\sigma(r)\sigma(r')][\sigma(r)\sigma(s)].
\end{aligned}
\]
\noindent Case ($\text{c}_1$): $\sigma(r)<\sigma(r')$ and $\sigma(r)<\sigma(s)$. In this case,
\[
[\sigma(r)\sigma(r')][\sigma(r)\sigma(s)] = u_{(\sigma(r)\sigma(s)\sigma(r'))} = u_{\sigma(rsr')\sigma^{-1}}.
\]

\noindent Case ($\text{c}_2$): Either $\sigma(s)<\sigma(r)<\sigma(r')$ or $\sigma(s)<\sigma(r')<\sigma(r)$. 
Using the first equality of Lemma~\ref{3-cycle relation},
\[
\begin{aligned}[1]
[\sigma(r)\sigma(r')][\sigma(r)\sigma(s)] = [\sigma(r)\sigma(s)][\sigma(s)\sigma(r')]z
&= [\sigma(s)\sigma(r)]z[\sigma(s)\sigma(r')]z \\
&= u_{(\sigma(s)\sigma(r')\sigma(r))}\\
&= u_{\sigma(rsr')\sigma^{-1}}.
\end{aligned}
\]

\noindent Case ($\text{c}_3$): Either $\sigma(r')<\sigma(r)<\sigma(s)$ or $\sigma(r')<\sigma(s)<\sigma(r)$. 
Using the second equality of Lemma~\ref{3-cycle relation},
\[
\begin{aligned}[1]
[\sigma(r)\sigma(r')][\sigma(r)\sigma(s)] = [\sigma(r')\sigma(s)][\sigma(r)\sigma(r')]z
&= [\sigma(r')\sigma(s)][\sigma(r')\sigma(r)]zz \\
&= u_{(\sigma(r')\sigma(r)\sigma(s))}\\
&= u_{\sigma(rsr')\sigma^{-1}}.
\end{aligned}
\]

\end{proof}

We now turn our attention to the Hochschild cohomology of $S_\mathbf{-1}(V)\#_\alpha S_n$.
We begin with the following, which is Theorem~6.8 from \cite{NW}.

\begin{theorem}[\cite{NW}]
\label{thm: constant cocycles m=1}
Assume that $n \geq 4$. The constant Hochschild 2-cocycles 
representing elements in $\HH^2(S_\mathbf{-1}(V), S_\mathbf{-1}(V)\# S_n)$
form a vector subspace of $(S_\mathbf{-1}(V) \# G) \ot \Wedge_{\mathbf{-1}}(V^*)$ with basis all
\[
\begin{aligned}[2]
 \eta_1 &= t_1\ot v_r^*\wedge v_s^* &&  (r<s),\\
 \eta_2 &= t_{(rs)}\ot v_r^*\wedge v_s^* && (r<s), \\
 \eta_3 &= t_{(rs)}\ot (v_r^*\wedge v_{r'}^* + v_s^*\wedge v_{r'}^*) && (r<s),\\
 \eta_4 &= t_{(rs)(r's')} \ot (v_r^*\wedge v_{r'}^* + v_r^*\wedge v_{s'}^* + v_s^*\wedge v_{r'}^* 
               + v_s^*\wedge v_{s'}^*) && (r<s, r'<s', r<r'),\\
 \eta_5 &= t_{(rs{r'})}\ot (v_r^*\wedge v_s^* + v_s^*\wedge v_{r'}^* + v_r^*\wedge v_{r'}^*) && (r<s, r<r').
\end{aligned}
\]
\end{theorem}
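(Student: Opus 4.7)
The plan is to compute $\HH^2(S_\mathbf{-1}(V), S_\mathbf{-1}(V)\# S_n)$ via the quantum Koszul resolution of Section~\ref{computing HH^2}, specialized to $\q = \mathbf{-1}$, and then extract the constant part. Since $\HH^{\bu}(S_\mathbf{-1}(V), S_\mathbf{-1}(V)\# S_n)$ decomposes as $\bigoplus_{g \in S_n} \HH^{\bu}(S_\mathbf{-1}(V), S_\mathbf{-1}(V)t_g)$, it suffices to work one $g$ at a time. A general element of degree $-2$ at $g$ has the form $\eta^g = \sum_{r<s} \eta_{rs}^g\, t_g \otimes v_r^*\wedge v_s^*$, and since the image of $d_2^*$ applied to anything in $S_\mathbf{-1}(V)t_g \otimes \Wedge_\mathbf{-1}^1(V^*)$ raises the $v$-degree by one, no constant cocycle is a coboundary. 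Thus the space of constant $2$-cocycles at $g$ equals $\ker d_3^* \cap (\CC t_g \otimes \Wedge_\mathbf{-1}^2(V^*))$.

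Next, I would plug $\q = \mathbf{-1}$ into the formula \eqref{formula for d_3^*}. After computing $q_{ij}q_{ik} = q_{ik}q_{jk} = 1$ and $q_{ij} = q_{jk} = -1$, the condition $d_3^*(\eta^g) = 0$ collapses to the symmetric equation
\[
\eta_{jk}^g(\lexp{g}{v_i} - v_i) + \eta_{ik}^g(\lexp{g}{v_j} - v_j) + \eta_{ij}^g(\lexp{g}{v_k} - v_k) = 0
\]
for every triple $1 \leq i < j < k \leq n$. Writing $w_i^g := v_{g(i)} - v_i$, the condition reads $\eta_{jk}^g w_i^g + \eta_{ik}^g w_j^g + \eta_{ij}^g w_k^g = 0$, and $w_i^g = 0$ precisely when $i$ is fixed by $g$. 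The whole problem thus reduces to a linear algebra analysis depending on how $\{i,j,k\}$ intersects the support of $g$.

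Now I would carry out the case analysis on the cycle type of $g$, in each case using linear independence of the relevant $v_{g(i)} - v_i$ in $V$.
\begin{itemize}
\item If $g = 1$, every $w_i^g$ vanishes, so all $\eta_{rs}^1$ are free; this yields the family $\eta_1$.
\item If $g = (rs)$ is a transposition, then $w_r^g = -w_s^g \ne 0$ and $w_i^g = 0$ for $i \notin \{r,s\}$. Triples with $\{j,k\}$ disjoint from $\{r,s\}$ (and some $i \in \{r,s\}$) force $\eta_{jk}^g = 0$, so the only surviving indices pairs are those meeting $\{r,s\}$. Triples of the three shapes $(i,r,s)$ with $i<r$, $(r,j,s)$ with $r<j<s$, and $(r,s,k)$ with $k>s$ each give one equation, namely equality of the two $\eta^g$'s attached to $r'$ paired with $r$ and with $s$. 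This produces exactly $\eta_2$ (the pair $(r,s)$ itself) and one copy of $\eta_3$ per $r' \notin \{r,s\}$.
\item If $g = (rs)(r's')$ is a double transposition, triples with two outside indices and one inside force $\eta_{ab}^g = 0$ whenever $\{a,b\} \not\subseteq \{r,s,r',s'\}$. Working through the four triples contained in $\{r,s,r',s'\}$, linear independence of the vectors $v_s - v_r$ and $v_{s'} - v_{r'}$ forces $\eta_{rs}^g = \eta_{r's'}^g = 0$ and identifies the four coefficients $\eta_{rr'}^g = \eta_{rs'}^g = \eta_{sr'}^g = \eta_{ss'}^g$, yielding $\eta_4$.
\item If $g = (rs r')$ is a $3$-cycle, the three vectors $w_r^g, w_s^g, w_{r'}^g$ sum to $0$, and the single triple inside the support yields $\eta_{rs}^g = \eta_{sr'}^g = \eta_{rr'}^g$; triples meeting outside indices kill the remaining coefficients, producing $\eta_5$.
\item If $g$ has any other cycle type (a cycle of length $\geq 4$, or a permutation with three or more nontrivial cycles), I would pick three indices $i<j<k$ lying in three different cycles, or three consecutive points of a long cycle, and use that the vectors $v_{g(i)} - v_i$, $v_{g(j)} - v_j$, $v_{g(k)} - v_k$ are linearly independent to conclude $\eta_{ij}^g = \eta_{ik}^g = \eta_{jk}^g = 0$, then propagate to kill all remaining coefficients.
\end{itemize}

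The main obstacle is the last bullet: systematically ruling out contributions from all unlisted cycle types requires verifying linear independence in enough triples to force every $\eta_{ab}^g$ to vanish, and confirming that for $n \geq 4$ the case analysis in the listed types is complete and the resulting families are linearly independent in $(S_\mathbf{-1}(V)\#S_n) \otimes \Wedge_\mathbf{-1}(V^*)$. Once the casework is done, assembling the surviving cocycles across conjugacy classes yields precisely the basis $\eta_1, \eta_2, \eta_3, \eta_4, \eta_5$.
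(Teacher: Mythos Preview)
The paper does not prove this theorem; it is quoted verbatim from \cite{NW} (as Theorem~6.8 there), so there is no ``paper's own proof'' to compare against. Your plan is the natural one and is essentially how such results are obtained: compute $\ker d_3^*$ on the constant part $\CC t_g\otimes\Wedge_{\mathbf{-1}}^2(V^*)$ for each $g$, reduce to the symmetric relation
\[
\eta_{jk}^g\,w_i^g+\eta_{ik}^g\,w_j^g+\eta_{ij}^g\,w_k^g=0,\qquad w_i^g:=v_{g(i)}-v_i,
\]
and analyse cycle types. Your treatment of $g=1$, a transposition, a double transposition, and a $3$-cycle is correct (modulo a small imprecision for the double transposition when $n=5$: there is only one index outside the support, so you cannot form a triple with ``two outside indices''; instead use one outside index together with two inside indices lying in \emph{different} $2$-cycles to get two independent $w$'s and kill the relevant coefficients).

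There is, however, a genuine gap in your final bullet. Your parenthetical enumeration of the remaining cycle types, ``a cycle of length $\geq 4$, or a permutation with three or more nontrivial cycles'', is not exhaustive: it omits permutations with exactly two nontrivial cycles of lengths $(2,3)$ or $(3,3)$, e.g.\ $(12)(345)$ or $(123)(456)$. Neither of your two recipes (three indices in three different cycles; three consecutive points of a long cycle) applies to these. They are easy to handle by the same method---for instance, for $(12)(345)$ take $i=1$, $j=3$, $k=4$, so that $w_1,w_3,w_4$ are linearly independent---but you must include them and then carry out the ``propagation'' you allude to: for every pair $\{a,b\}$ exhibit a triple $\{a,b,c\}$ in which the equation forces $\eta_{ab}^g=0$. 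Once that casework is completed, your argument goes through.
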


Note that the $S_\mathbf{-1}(V)$-bimodule structure of $S_\mathbf{-1}(V) \#_\alpha G$ does not
depend on the $2$-cocycle $\alpha$, and so
$\HH^2(S_\mathbf{-1}(V), S_\mathbf{-1}(V) \#_\alpha G) = \HH^2(S_\mathbf{-1}(V), S_\mathbf{-1}(V) \# G)$.

The lemma below involves the maps $\Theta_2^*, \R_2$, and $\Psi_2^*$
defined in Section~\ref{computing HH^2}. Recall that the image of 
an element $\sigma \in S_n$ in the twisted group algebra $\CC^\alpha S_n$ 
is denoted by $t_\sigma$. Also, recall the definition of 
the $2$-cocycle $\alpha$ given in \eqref{defn of alpha}.

\begin{lemma}\label{image of vi ot vj}
We have
\[
 \left[(\Theta_2^* \R_2  \Psi_2^*)(\eta_a)\right](v_i \ot v_j)=
\begin{cases}
\frac{1}{n(n-1)} t_1 & \text{ if } a=1,  \\
0 & \text{ if $a=2$},  \\
0 & \text{ if $a=3$ and $n \geq 5$},  \\
0 & \text{ if $a=4$},  \\
\frac{1}{n(n-1)(n-2)}\sum_{k\neq i,j} (2t_{(ijk)} + t_{(ikj)}) & \text{ if } a=5, 
\end{cases}
\]
for all $i \neq j$.
\end{lemma}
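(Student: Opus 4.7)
The plan is to apply formula \eqref{composition} directly to each $\eta_a$. In our setting $S_n$ acts by $\lexp{\sigma}{v_k} = v_{\sigma(k)}$, and \eqref{psi-two} gives $\Psi_2(1 \ot v_k \ot v_l \ot 1) = 0$ unless $k < l$, so the formula reduces to
\[
[\Theta_2^* \R_2 \Psi_2^*(\eta_a)](v_i \ot v_j) \;=\; \frac{1}{n!} \sum_{\sigma \colon \sigma^{-1}(i) < \sigma^{-1}(j)} \lexp{\sigma}{\bigl(\eta_a(v_{\sigma^{-1}(i)} \wedge v_{\sigma^{-1}(j)})\bigr)}.
\]
Each $\eta_a$ restricts the surviving $\sigma$'s to those whose inverse image of $\{i,j\}$ matches one of its wedge-terms. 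For each such $\sigma$, the action is computed via $\lexp{\sigma}{t_g} = \tfrac{\alpha(\sigma, g)}{\alpha(\sigma g \sigma^{-1}, \sigma)}\, t_{\sigma g \sigma^{-1}}$ (derived in the proof of Theorem \ref{G-invariant elements}), and the $\pm 1$ signs are read off from \eqref{defn of alpha} by computing products in the Schur cover $T_n$, with Lemma \ref{section properties} as the critical bookkeeping device.

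Case $a=1$ is immediate: the constraints force $\sigma(r)=i$, $\sigma(s)=j$, giving $(n-2)!$ identical contributions of $t_1$ and hence $\tfrac{1}{n(n-1)}\, t_1$. For cases $a=2,3,4$ I plan to show the signed sum vanishes by exhibiting an involution on the set of contributing $\sigma$'s that preserves the output $t_{\sigma g \sigma^{-1}}$ but flips the $\alpha$-ratio. For $a=2$, the involution is $\sigma \mapsto \sigma \circ (pq)$ for any transposition $(pq)$ of indices outside $\{r,s\}$ (available once $n \geq 4$); by Lemma \ref{section properties}(a) this flips $\epsilon(\sigma)$ without altering the relative order of $\sigma(r), \sigma(s)$, so exactly one of $\alpha(\sigma, (rs))$ and $\alpha((ij), \sigma)$ changes sign. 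For $a=3$, a similar involution requires an additional free index outside $\{r, s, r'\}$, forcing the hypothesis $n \geq 5$; the two wedges in $\eta_3$ are handled in parallel after partitioning contributing $\sigma$'s by the value of $\sigma(r)\sigma(s)\sigma^{-1}$. For $a=4$, Lemma \ref{section properties}(b) combined with Lemma \ref{props of d} provides a sign-flipping pairing based on swapping indices within $\{r,s,r',s'\}$, and the $\sigma$-symmetric form of the wedge-cofactor in $\eta_4$ ensures total cancellation.

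Case $a = 5$ is the only one yielding a nonzero answer, and its decisive feature is Lemma \ref{section properties}(c): for $3$-cycles, the section $u$ satisfies $\sigma \rt u_{(rsr')} = u_{\sigma(rsr')\sigma^{-1}}$ without any $z$-twist, so $\tfrac{\alpha(\sigma, (rsr'))}{\alpha(\sigma(rsr')\sigma^{-1}, \sigma)} = 1$ for every contributing $\sigma$. It then remains to enumerate, for each $k \neq i, j$, the bijections $\sigma|_{\{r,s,r'\}} \colon \{r,s,r'\} \to \{i,j,k\}$ consistent with $\sigma^{-1}(i) < \sigma^{-1}(j)$, determine the resulting $3$-cycle output $(\sigma(r), \sigma(s), \sigma(r'))$ for each of the three wedge-terms, and multiply by $(n-3)!$ for the free assignment on the remaining $n-3$ indices. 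The asymmetric interaction of the three wedge-terms with the ordering constraint $\sigma^{-1}(i) < \sigma^{-1}(j)$ produces the $2:1$ multiplicity of $t_{(ijk)}$ over $t_{(ikj)}$. The main obstacle is the sign accounting for $a = 2, 3, 4$: one must verify that each exhibited involution flips the \emph{full} $\alpha$-ratio (not merely one of its two factors) and leaves the target conjugate intact, which requires systematic use of the $T_n$-identities assembled in Lemmas \ref{V lemma}--\ref{section properties}.
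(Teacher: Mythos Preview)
Your proposal is correct and follows essentially the same route as the paper's proof. In particular, your parity-flipping involution for $a=2,3$ is exactly the paper's observation that the relevant index sets contain equally many odd and even permutations (forcing $n\geq 4$ and $n\geq 5$, respectively), and your handling of $a=4$ via Lemma~\ref{props of d} and of $a=5$ via Lemma~\ref{section properties}(c) coincides with the paper's argument; the only cosmetic difference is that the paper computes the ratio $\alpha(\sigma,g)/\alpha(\sigma g\sigma^{-1},\sigma)$ directly as a single power of $z$ (via $u_\sigma u_g u_\sigma^{-1} u_{\sigma g\sigma^{-1}}^{-1}$) rather than tracking the two factors separately, which dispenses with the concern you flag in your final sentence.
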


\begin{proof}
Using \eqref{composition}, 
\[
\begin{aligned}[1]
\left[(\Theta_2^* \R_2  \Psi_2^*)(\eta_1)\right](v_i \ot v_j) 
&=\frac{1}{n!} \sum_{\sigma \in S_n} 
\lexp{\sigma}{\left(\eta_1(\Psi_2(1 \ot v_{\sigma^{-1}(i)}\ot v_{\sigma^{-1}(j)} \ot 1))\right)}\\
&=\frac{1}{n!} \sum_{\substack{\sigma \in S_n \\ \sigma^{-1}(i) < \sigma^{-1}(j)}} 
\lexp{\sigma}{\left(\eta_1(1 \ot 1 \ot v_{\sigma^{-1}(i)}\ot v_{\sigma^{-1}(j)})\right)}\\
&=\frac{1}{n!} \sum_{\substack{\sigma \in S_n \\ \sigma(r)=i,\sigma(s)=j}} 
\lexp{\sigma}{\left(t_1\right)}\\
&=\frac{1}{n(n-1)} t_1.
\end{aligned}
\]

Similarly, 
\[
\left[(\Theta_2^* \R_2  \Psi_2^*)(\eta_2)\right](v_i \ot v_j) 
=\frac{1}{n!} \sum_{\substack{\sigma \in S_n \\ \sigma(r)=i,\sigma(s)=j}} 
\lexp{\sigma}{\left(t_{(rs)}\right)}.
\]
Applying the conjugation action in $\CC^\alpha G$, we get
\[
\frac{1}{n!} \sum_{\substack{\sigma \in S_n \\ \sigma(r)=i,\sigma(s)=j}} 
\frac{\alpha(\sigma,(rs))}{\alpha(\sigma(rs)\sigma^{-1},\sigma)}t_{\sigma(rs)\sigma^{-1}}
=\left(\frac{1}{n!} \sum_{\substack{\sigma \in S_n \\ \sigma(r)=i,\sigma(s)=j}} 
\frac{\alpha(\sigma,(rs))}{\alpha((ij),\sigma)}\right)t_{(ij)}.
\]
The scalar $\frac{\alpha(\sigma,(rs))}{\alpha((ij),\sigma)}$ in the summation above is determined
by the following element of $T_n$:
\[
u_\sigma u_{(rs)} u_{\sigma(rs)}^{-1} u_{(ij)\sigma} u_\sigma^{-1} u_{(ij)}^{-1}
= u_\sigma u_{(rs)} u_\sigma^{-1} u_{\sigma(rs)\sigma^{-1}}^{-1}.
\]
By part (a) of Lemma~\ref{section properties},
\[
u_\sigma u_{(rs)} u_\sigma^{-1} u_{\sigma(rs)\sigma^{-1}}^{-1}=
\begin{cases}
z^{\epsilon(\sigma)} & \text{ if } i<j,\\
z^{\epsilon(\sigma)+1} & \text{ if } i>j.
\end{cases}
\]
Since $n$ is assumed to be greater than or equal to  $4$, 
the set $\{\sigma \in S_n \mid \sigma(r)=i, \sigma(s)=j\}$ contains
an equal number of odd and even permutations, and so  
\[
\sum_{\substack{\sigma \in S_n \\ \sigma(r)=i,\sigma(s)=j}} 
\frac{\alpha(\sigma,(rs))}{\alpha((ij),\sigma)}=0,
\]
proving that $\left[(\Theta_2^* \R_2  \Psi_2^*)(\eta_2)\right](v_i \ot v_j)=0$.

Next, we consider the $a=3$ case. In addition to the stated assumption $r<s$, 
assume further that $r<r'$ and $s<r'$. The other cases can be handled similarly.
We have
\[
\left[(\Theta_2^* \R_2  \Psi_2^*)(\eta_3)\right](v_i \ot v_j)
= \frac{1}{n!} \sum_{\substack{\sigma \in S_n \\ \sigma(r)=i,\sigma(r')=j}} 
\lexp{\sigma}{\left(t_{(rs)}\right)}
+ \frac{1}{n!} \sum_{\substack{\sigma \in S_n \\ \sigma(s)=i,\sigma(r')=j}} 
\lexp{\sigma}{\left(t_{(rs)}\right)}.
\]
Applying the conjugation action in $\CC^\alpha G$, we get
\[
\begin{aligned}[1]
&\frac{1}{n!} \sum_{\substack{\sigma \in S_n \\ \sigma(r)=i,\sigma(r')=j}} 
\frac{\alpha(\sigma, (rs))}{\alpha((i\sigma(s)), \sigma)} t_{(i\sigma(s))}
+ \frac{1}{n!} \sum_{\substack{\sigma \in S_n \\ \sigma(s)=i,\sigma(r')=j}} 
\frac{\alpha(\sigma, (rs))}{\alpha((\sigma(r)i), \sigma)} t_{(\sigma(r)i)}\\
&=\frac{1}{n!} \sum_{k \neq i,j} \left(
\sum_{\substack{\sigma \in S_n \\ \sigma(r)=i,\sigma(r')=j, \sigma(s)=k}} 
\frac{\alpha(\sigma, (rs))}{\alpha((ik), \sigma)}
+ \sum_{\substack{\sigma \in S_n \\ \sigma(s)=i,\sigma(r')=j, \sigma(r)=k}} 
\frac{\alpha(\sigma, (rs))}{\alpha((ik), \sigma)}
\right) t_{(ik)}.
\end{aligned}
\]
The scalar $\frac{\alpha(\sigma,(rs))}{\alpha((ik),\sigma)}$ in the first of
the two inner summations above is determined
by the element $u_\sigma u_{(rs)} u_\sigma^{-1} u_{\sigma(rs)\sigma^{-1}}^{-1}$ of $T_n$.
Again, by part (a) of Lemma~\ref{section properties},
\[
u_\sigma u_{(rs)} u_\sigma^{-1} u_{\sigma(rs)\sigma^{-1}}^{-1}=
\begin{cases}
z^{\epsilon(\sigma)} & \text{ if } i<k,\\
z^{\epsilon(\sigma)+1} & \text{ if } i>k.
\end{cases}
\]
Since $n$ is assumed to be greater than or equal to  $5$, 
the set $\{\sigma \in S_n \mid \sigma(r)=i, \sigma(r')=j, \sigma(s)=k\}$ contains
an equal number of odd and even permutations, and so  
\[
\sum_{\substack{\sigma \in S_n \\ \sigma(r)=i,\sigma(r')=j,\sigma(s)=k}} 
\frac{\alpha(\sigma,(rs))}{\alpha((ik),\sigma)}=0.
\]
Similarly, 
\[
\sum_{\substack{\sigma \in S_n \\ \sigma(s)=i,\sigma(r')=j,\sigma(r)=k}} 
\frac{\alpha(\sigma,(rs))}{\alpha((ik),\sigma)}=0,
\]
and it follows that $\left[(\Theta_2^* \R_2  \Psi_2^*)(\eta_3)\right](v_i \ot v_j)=0$.

For the $a=4$ case, in addition to the stated assumptions $r<s, r'<s',r<r'$, 
assume further that $r<s', s<r',$ and $s<s'$. The other cases can be handled similarly.
We have
\[
\begin{aligned}[1]
\left[(\Theta_2^* \R_2  \Psi_2^*)(\eta_4)\right](v_i \ot v_j) 
&=\frac{1}{n!} \sum_{\substack{\sigma \in S_n \\ \sigma(r)=i,\sigma(r')=j}} 
\lexp{\sigma}{\left(t_{(rs)(r's')}\right)}
+ \frac{1}{n!} \sum_{\substack{\sigma \in S_n \\ \sigma(r)=i,\sigma(s')=j}} 
\lexp{\sigma}{\left(t_{(rs)(r's')}\right)} \\
& \hspace{0.725in} + \; \frac{1}{n!} \sum_{\substack{\sigma \in S_n \\ \sigma(s)=i,\sigma(r')=j}} 
\lexp{\sigma}{\left(t_{(rs)(r's')}\right)}
+ \frac{1}{n!} \sum_{\substack{\sigma \in S_n \\ \sigma(s)=i,\sigma(s')=j}} 
\lexp{\sigma}{\left(t_{(rs)(r's')}\right)}
\end{aligned}
\]
Applying the conjugation action in $\CC^\alpha G$, we get
\[
\begin{aligned}
&\frac{1}{n!} \Bigg( \sum_{\substack{\sigma \in S_n \\ \sigma(r)=i,\sigma(r')=j}} 
\frac{\alpha(\sigma, (rs)(r's'))}{\alpha((i\sigma(s))(j\sigma(s')), \sigma)} t_{(i\sigma(s))(j\sigma(s'))}
+ \sum_{\substack{\sigma \in S_n \\ \sigma(r)=i,\sigma(s')=j}}
\frac{\alpha(\sigma, (rs)(r's'))}{\alpha((i\sigma(s))(\sigma(r')j), \sigma)} t_{(i\sigma(s))(\sigma(r')j)} \\
&+ \sum_{\substack{\sigma \in S_n \\ \sigma(s)=i,\sigma(r')=j}} 
\frac{\alpha(\sigma, (rs)(r's'))}{\alpha((\sigma(r)i)(j\sigma(s')), \sigma)} t_{(\sigma(r)i)(j\sigma(s'))} 
+ \sum_{\substack{\sigma \in S_n \\ \sigma(s)=i,\sigma(s')=j}} 
\frac{\alpha(\sigma, (rs)(r's'))}{\alpha((\sigma(r)i)(\sigma(r')j), \sigma)} t_{(\sigma(r)i)(\sigma(r')j)} \Bigg)\\
&=\frac{1}{n!} \sum_{k,l \not \in \{i,j\}} \Bigg(
\sum_{\substack{\sigma \in S_n \\ \sigma(r)=i,\sigma(r')=j \\ \sigma(s)=k,\sigma(s')=l}} 
\frac{\alpha(\sigma, (rs)(r's'))}{\alpha((ik)(jl), \sigma)}
+ \sum_{\substack{\sigma \in S_n \\ \sigma(r)=i,\sigma(s')=j \\ \sigma(s)=k,\sigma(r')=l}} 
\frac{\alpha(\sigma, (rs)(r's'))}{\alpha((ik)(jl), \sigma), \sigma)} \\
& \hspace{1.35in} + \; \sum_{\substack{\sigma \in S_n \\ \sigma(s)=i,\sigma(r')=j \\ \sigma(r)=k,\sigma(s')=l}} 
\frac{\alpha(\sigma, (rs)(r's'))}{\alpha((ik)(jl), \sigma), \sigma)}
+ \sum_{\substack{\sigma \in S_n \\ \sigma(s)=i,\sigma(s')=j \\ \sigma(r)=k,\sigma(r')=l}} 
\frac{\alpha(\sigma, (rs)(r's'))}{\alpha((ik)(jl), \sigma), \sigma)}
\Bigg) t_{(ik)(jl)}.
\end{aligned}
\]
The scalar $\frac{\alpha(\sigma,(rs)(r's'))}{\alpha((ik)(jl),\sigma)}$ in the first of
the four inner summations above is determined
by the element $u_\sigma u_{(rs)(r's')} u_\sigma^{-1} u_{\sigma(rs)(r's')\sigma^{-1}}^{-1}$ of $T_n$.
By part (b) of Lemma~\ref{section properties},
\[
u_\sigma u_{(rs)(r's)} u_\sigma^{-1} u_{\sigma(rs)(r's')\sigma^{-1}}^{-1}=
z^{d_\sigma(r,s,r's')} = z^{d(i,k,j,l)}.
\]
Thus, 
\[
\sum_{\substack{\sigma \in S_n \\ \sigma(r)=i,\sigma(r')=j \\ \sigma(s)=k,\sigma(s')=l}} 
\frac{\alpha(\sigma, (rs)(r's'))}{\alpha((ik)(jl), \sigma)}
= (n-4)! (-1)^{d(i,k,j,l)}.
\]
Similarly, the second, third, and fourth summations are equal to $(n-1)!$ times
$(-1)^{d(i,k,l,j)}$, $(-1)^{d(k,i,j,l)}$, $(-1)^{d(k,i,l,j)}$, respectively.
It follows, from Lemma~\ref{props of d} that the sum of the four summations above is 
equal to zero, and so
$\left[(\Theta_2^* \R_2  \Psi_2^*)(\eta_4)\right](v_i \ot v_j)=0$.

Finally, for the $a=5$ case, in addition to the stated assumptions $r<s, r<r'$, 
assume further that $s<r'$. Again, the other case can be handled similarly.
We have\\\\
$\left[(\Theta_2^* \R_2  \Psi_2^*)(\eta_5)\right](v_i \ot v_j)$
\[
\begin{aligned}[1]
&=\frac{1}{n!} \sum_{\substack{\sigma \in S_n \\ \sigma(r)=i,\sigma(s)=j}} 
\lexp{\sigma}{\left(t_{(rsr')}\right)}
+ \frac{1}{n!} \sum_{\substack{\sigma \in S_n \\ \sigma(s)=i,\sigma(r')=j}} 
\lexp{\sigma}{\left(t_{(rsr')}\right)}
+ \frac{1}{n!} \sum_{\substack{\sigma \in S_n \\ \sigma(r)=i,\sigma(r')=j}} 
\lexp{\sigma}{\left(t_{(rsr')}\right)}\\
&=\frac{1}{n!} \sum_{\substack{\sigma \in S_n \\ \sigma(r)=i,\sigma(s)=j}} 
\frac{\alpha(\sigma, (rsr'))}{\alpha((ij\sigma(r')), \sigma)} t_{(ij\sigma(r'))}
+ \frac{1}{n!} \sum_{\substack{\sigma \in S_n \\ \sigma(s)=i,\sigma(r')=j}} 
\frac{\alpha(\sigma, (rsr'))}{\alpha((\sigma(r)ij)), \sigma)} t_{(\sigma(r)ij)} \\
& \hspace{0.725in} + \; \frac{1}{n!} \sum_{\substack{\sigma \in S_n \\ \sigma(r)=i,\sigma(r')=j}} 
\frac{\alpha(\sigma, (rsr'))}{\alpha((i\sigma(s)j), \sigma)} t_{(i\sigma(s)j)}\\
&=\frac{1}{n!} \sum_{k,l \not \in \{i,j\}} \Bigg[\Bigg(
\sum_{\substack{\sigma \in S_n \\ \sigma(r)=i,\sigma(s)=j,\sigma(r')=k}} 
\frac{\alpha(\sigma, (rsr'))}{\alpha((ijk), \sigma)}
+ \sum_{\substack{\sigma \in S_n \\ \sigma(s)=i,\sigma(r')=j,\sigma(r)=k}} 
\frac{\alpha(\sigma, (rsr'))}{\alpha((ijk)), \sigma)} \Bigg) t_{(ijk)}\\
& \hspace{3.2in} + \; \sum_{\substack{\sigma \in S_n \\ \sigma(r)=i,\sigma(r')=j,\sigma(s)=k}} 
\frac{\alpha(\sigma, (rsr'))}{\alpha((ikj), \sigma)}t_{(ikj)} \Bigg].
\end{aligned}
\]
The scalar $\frac{\alpha(\sigma,(rsr'))}{\alpha((ijk),\sigma)}$ in the first of
the three inner summations above is determined
by the element $u_\sigma u_{(rsr')} u_\sigma^{-1} u_{\sigma(rsr')\sigma^{-1}}^{-1}$ of $T_n$.
By part (c) of Lemma~\ref{section properties},
\[
u_\sigma u_{(rsr')} u_\sigma^{-1} u_{\sigma(rsr')\sigma^{-1}}^{-1} = 1.
\]
Thus, 
\[
\sum_{\substack{\sigma \in S_n \\ \sigma(r)=i,\sigma(s)=j \sigma(r')=k}} 
\frac{\alpha(\sigma, (rsr')}{\alpha((ijk), \sigma)}
= (n-3)!.
\]
Similarly, the second and third summations are also equal to $(n-3)!$.
It follows that
$\left[(\Theta_2^* \R_2  \Psi_2^*)(\eta_5)\right](v_i \ot v_j)= 
\frac{1}{n(n-1)(n-2)}\sum_{k\neq i,j} (2t_{(ijk)} + t_{(ikj)})$.

\end{proof}

\noindent Combining Theorems \ref{thm: constant cocycles m=1}, \ref{thm: CGW}, 
\ref{thm: bijection}, and Lemma~\ref{image of vi ot vj}
establishes the following.

\begin{theorem}
Assume that $n \geq 5$.
The maps $\kappa:V \times V \to \CC^\alpha S_n$ for which $\mH_{\mathbf{-1}, \kappa, \alpha}$ is a twisted 
quantum Drinfeld Hecke algebra form
a two-dimensional vector space with basis consisting of bilinear maps
$\kappa_1: V \times V \to \CC^\alpha S_n$ and $\kappa_2: V \times V \to \CC^\alpha S_n$
determined by
\[
\begin{aligned}[1]
\kappa_1(v_i,v_j) &=  t_1,\\
\kappa_2(v_i,v_j) &= \sum_{k\neq i,j} (t_{(ijk)} + t_{(ikj)}),
\end{aligned}
\]
for all $i \neq j$.
\end{theorem}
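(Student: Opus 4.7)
The plan is to assemble the pieces already established in the paper. By Theorem~\ref{thm: bijection}, classifying the maps $\kappa$ for which $\mH_{\mathbf{-1},\kappa,\alpha}$ is a twisted quantum Drinfeld Hecke algebra reduces to classifying equivalence classes of constant Hochschild $2$-cocycles on $S_\mathbf{-1}(V)\#_\alpha S_n$; the correspondence sends a cocycle $\mu_1$ to $\kappa(v_i,v_j) = \mu_1(v_i,v_j) - q_{ij}\mu_1(v_j,v_i) = \mu_1(v_i,v_j) + \mu_1(v_j,v_i)$, using $q_{ij}=-1$ for $i \neq j$ in the tuple $\mathbf{-1}$. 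By Theorem~\ref{thm: CGW}, every such class has a representative obtained by applying $\Theta_2^* \R_2 \Psi_2^*$ to a $G$-invariant element of $\HH^2(S_\mathbf{-1}(V), S_\mathbf{-1}(V)\#_\alpha S_n)$, and since the $S_\mathbf{-1}(V)$-bimodule structure of $S_\mathbf{-1}(V)\#_\alpha S_n$ is independent of $\alpha$, that cohomology group is the one computed in Theorem~\ref{thm: constant cocycles m=1}.

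I would next take an arbitrary $S_n$-invariant linear combination $\eta = \sum_{a=1}^{5} c_a\eta_a'$, where each $\eta_a'$ denotes the symmetrization of $\eta_a$ over its orbit under $S_n$. Applying Lemma~\ref{image of vi ot vj} gives $[\Theta_2^*\R_2\Psi_2^*(\eta_a)](v_i\otimes v_j)$ explicitly for each $a$: only $a=1$ and $a=5$ contribute nontrivially when $n\geq 5$, producing the multiples $\tfrac{1}{n(n-1)}t_1$ and $\tfrac{1}{n(n-1)(n-2)}\sum_{k\neq i,j}(2t_{(ijk)}+t_{(ikj)})$ respectively. In particular, passage through the isomorphism of Theorem~\ref{thm: CGW} shows that the space of classes of constant Hochschild $2$-cocycles on $S_\mathbf{-1}(V)\#_\alpha S_n$ is two-dimensional.

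To produce the claimed basis $\kappa_1,\kappa_2$, I would then apply the quantum skew-symmetrization. For the class of $\eta_1$, summing $\mu_1(v_i,v_j)$ and $\mu_1(v_j,v_i)$ gives a scalar multiple of $t_1$, yielding (after rescaling) $\kappa_1(v_i,v_j)=t_1$. For the class of $\eta_5$, the key observation is that $(jik)=(ikj)$ and $(jki)=(ijk)$ as $3$-cycles, so that $\mu_1(v_j,v_i)=\tfrac{1}{n(n-1)(n-2)}\sum_{k\neq i,j}(2t_{(ikj)}+t_{(ijk)})$; adding gives a scalar multiple of $\sum_{k\neq i,j}(t_{(ijk)}+t_{(ikj)})$, i.e.\ $\kappa_2$ up to rescaling. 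Since skew-symmetrization is injective on equivalence classes by Theorem~\ref{thm: bijection}, $\kappa_1$ and $\kappa_2$ are linearly independent, and together they span the space of valid parameter maps.

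The main obstacle is purely bookkeeping: correctly matching the three-cycle notation under the involution $(i,j)\leftrightarrow(j,i)$ in the $\eta_5$ computation so that the symmetrized sum collapses onto the stated form $\sum_{k\neq i,j}(t_{(ijk)}+t_{(ikj)})$. No new homological input is required beyond Theorem~\ref{thm: constant cocycles m=1} and Lemma~\ref{image of vi ot vj}; the hypothesis $n\geq 5$ enters precisely because it is needed in Lemma~\ref{image of vi ot vj} to kill the $a=3$ contribution.
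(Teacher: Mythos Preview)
Your proposal is correct and matches the paper's approach exactly: the paper's proof is the single sentence ``Combining Theorems~\ref{thm: constant cocycles m=1}, \ref{thm: CGW}, \ref{thm: bijection}, and Lemma~\ref{image of vi ot vj} establishes the following,'' and you have simply spelled out how those four results assemble. One small redundancy: you need not pass through the symmetrizations $\eta_a'$, since Lemma~\ref{image of vi ot vj} already applies $\Theta_2^*\R_2\Psi_2^*$ (with the Reynolds operator built in) directly to each $\eta_a$, and the linear independence of $\kappa_1,\kappa_2$ is immediate from the fact that they are supported on disjoint conjugacy classes.
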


\end{section}


\end{document}